\newcommand{\REG}{{\rm REG}}
\newcommand{\GCH}{{\rm GCH}}
\newcommand{\ZFC}{{\rm ZFC}}
\renewcommand{\P}{{\mathbb P}}
\newcommand{\C}{{\mathbb C}}
\newcommand{\Q}{{\mathbb Q}}
\newcommand{\R}{{\mathbb R}}
\renewcommand{\S}{\mathbb{S}}
\renewcommand{\H}{{\mathbb H}}
\newcommand{\Add}{\mathop{\rm Add}}
\newcommand{\forces}{\Vdash}
\newcommand{\forced}{\Vdash}
\newcommand{\restrict}{\upharpoonright}
\newcommand{\<}{\langle}
\renewcommand{\>}{\rangle}
\newcommand{\st}{:}
\newcommand{\supp}{\mathop{\rm supp}}
\newcommand{\dom}{\mathop{\rm dom}}
\newcommand{\range}{\mathop{\rm range}}
\newcommand{\Lim}{{\mathop{\rm Lim}}}
\newcommand{\cf}{\mathop{\rm cf}}
\newcommand{\NS}{{\mathop{\rm NS}}}
\newcommand{\Refl}{{\mathop{\rm Refl}}}
\newcommand{\Ind}{{\mathop{\rm Ind}}}
\newcommand{\Tr}{{\mathop{\rm Tr}}}
\newcommand{\wc}{{\mathop{\rm wc}}}
\newcommand{\WC}{{\mathop{\rm WC}}}
\newtheorem{theorem}{Theorem}[section]
\newtheorem{lemma}[theorem]{Lemma}
\newtheorem{corollary}[theorem]{Corollary}
\theoremstyle{definition}
\newtheorem{question}[theorem]{Question}
\newtheorem{remark}[theorem]{Remark}
\newtheorem{definition}[theorem]{Definition}
\date{\today}
\begin{document}

\title{Adding a non-reflecting weakly compact set}

\author[Brent Cody]{Brent Cody}
\address[Brent Cody]{ 
Virginia Commonwealth University,
Department of Mathematics and Applied Mathematics,
1015 Floyd Avenue, PO Box 842014, Richmond, Virginia 23284, United States
} 
\email[B. ~Cody]{bmcody@vcu.edu} 
\urladdr{http://www.people.vcu.edu/~bmcody/}

\begin{abstract}
  For $n<\omega$, we say that the \emph{$\Pi^1_n$-reflection principle holds at $\kappa$} and write $\Refl_n(\kappa)$ if and only if $\kappa$ is a $\Pi^1_n$-indescribable cardinal and every $\Pi^1_n$-indescribable subset of $\kappa$ has a $\Pi^1_n$-indescribable proper initial segment. The $\Pi^1_n$-reflection principle $\Refl_n(\kappa)$ generalizes a certain stationary reflection principle and implies that $\kappa$ is $\Pi^1_n$-indescribable of order $\omega$. We define a forcing which shows that the converse of this implication can be false in the case $n=1$; that is, we show that $\kappa$ being $\Pi^1_1$-indescribable of order $\omega$ need not imply $\Refl_1(\kappa)$. Moreover, we prove that if $\kappa$ is $(\alpha+1)$-weakly compact where $\alpha<\kappa^+$, then there is a forcing extension in which there is a weakly compact set $W\subseteq\kappa$ having no weakly compact proper initial segment, the class of weakly compact cardinals is preserved and $\kappa$ remains $(\alpha+1)$-weakly compact. We also formulate several open problems and highlight places in which standard arguments seem to break down.
\end{abstract}

\subjclass[2010]{Primary 03E55; Secondary 03E05}

\keywords{}

\maketitle

%\tableofcontents

%TO DO LIST

%\section{Introduction}

%\cite{MR4159545}

%\cite{MR4156888}
%
%\cite{HolyLCOandEE}
%
%\cite{MR0540770}
%
%\cite{MR3894041}

\section{Introduction}\label{sectionintroduction}

For a regular cardinal $\kappa$, we say that a set $S\subseteq\kappa$ is \emph{$\Pi^1_n$-indescribable} if for every $A\subseteq V_\kappa$ and every $\Pi^1_n$-formula $\varphi$, whenever $(V_\kappa,\in,A)\models \varphi$ there exists $\alpha\in S$ such that $(V_\alpha,\in,A\cap V_\alpha)\models \varphi$. Levy proved \cite{MR0281606} that if $\kappa$ is $\Pi^1_n$-indescribable then the collection $\Pi^1_n(\kappa)=\{X\subseteq\kappa\st \text{$X$ is not $\Pi^1_n$-indescribable}\}$ is a normal proper ideal on $\kappa$. The work of Hellsten (see \cite{MR2026390}, \cite{MR2252250} and \cite{MR2653962}) as well as the recent work of Bagaria-Magidor-Sakai \cite{MR3416912} has shown that various results involving the nonstationary ideal can be extended to the $\Pi^1_n$-indescribable ideal $\Pi^1_n(\kappa)$. For example, the notion of $\alpha$-Mahloness of a cardinal $\kappa$ where $\alpha\leq\kappa^+$ can be generalized to that of $\alpha$-$\Pi^1_n$-indescribability where $\alpha\leq\kappa^+$ (see Definition \ref{definition_gamma_indescribable} below). Hellsten proved \cite{MR2252250} that the $\Pi^1_n$-indescribability ideal $\Pi^1_n(\kappa)$ is not $\kappa^+$-saturated if $\kappa$ is $\kappa^+$-$\Pi^1_n$-indescribable. This is analogous to a result of Baumgartner, Taylor and Wagon \cite{MR0505505} stating that $\NS_\kappa\restrict\REG$ is not $\kappa^+$-saturated if $\kappa$ is $\kappa^+$-Mahlo. Extending work of Jech and Woodin \cite{MR805967} on the nonstationary ideal, Hellsten proved \cite{MR2653962} that it is consistent relative to the existence of a measurable cardinal $\kappa$ of Mitchell order $\alpha<\kappa^+$ that there is an $\alpha$-$\Pi^1_1$-indescribable cardinal $\kappa$ such that the $\Pi^1_1$-indescribable ideal $\Pi^1_1(\kappa)$ is $\kappa^+$-saturated. Recall that for a regular cardinal $\kappa$, we say that the stationary reflection principle $\Refl(\kappa)$ holds if and only if every stationary subset of $\kappa$ has a stationary proper initial segment. Bagaria-Magidor-Sakai \cite{MR3416912} generalized Jensen's result \cite{MR0309729} which states that in $L$, the weakly compact cardinals are precisely the regular cardinals at which the stationary reflection principle holds, by providing a similar characterization of the $\Pi^1_n$-indescribable cardinals in $L$. Additionally, extending Solovay's theorem on splitting stationary sets, Hellsten \cite[Theorem 2]{MR2653962} has shown (for a proof see \cite[Proposition 6.4]{MR2768692}) that the $\Pi^1_n$-indescribable ideal on a $\Pi^1_n$-indescribable cardinal $\kappa$ is nowhere $\kappa$-saturated. It is important to point out that for many reasons, these generalizations require substantial effort: for example, in many situations, one must deal with the fact that a non-weakly compact set (i.e. a non-$\Pi^1_1$-indescribable set) can become weakly compact in a forcing extension,\footnote{Indeed, Kunen showed \cite{MR495118} that a non-weakly compact cardinal can become supercompact in a forcing extension by showing that the forcing to add a Cohen subset to $\kappa$ is equivalent to a two-step iteration $\mathbb{S}*\dot{T}$ in which the first step  $\mathbb{S}$ is a forcing which adds homogeneous Suslin tree $\dot{T}$, and then in the second step one forces with this tree.} whereas a nonstationary set cannot be forced to become stationary. 

In this article we continue this line of research by generalizing the stationary reflection principle $\Refl(\kappa)$ as follows. The stationary reflection principle $\Refl(\kappa)$ is formulated by referencing the nonstationary ideals $\NS_\gamma$ for $\gamma\leq\kappa$ with $\cf(\gamma)>\omega$, and one may formulate new reflection principles by replacing these ideals with others. Let $\Refl_0(\kappa)$ be the statement asserting that $\kappa$ is inaccessible and for every stationary $S\subseteq\kappa$ there is an inaccessible cardinal $\gamma<\kappa$ such that $S\cap\gamma$ is a stationary subset of $\gamma$. Since a set $S\subseteq\gamma$ is $\Pi^1_0$-indescribable if and only if $\gamma$ is inaccessible and $S$ is stationary \cite{MR2252250}, we obtain a direct generalization of $\Refl_0(\kappa)$ as follows. For $n<\omega$, we say that the \emph{$\Pi^1_n$-reflection principle} holds at $\kappa$ and write $\text{Refl}_n(\kappa)$ if and only if $\kappa$ is a $\Pi^1_n$-indescribable cardinal and for every $\Pi^1_n$-indescribable set $S\subseteq\kappa$ there exists a $\gamma<\kappa$ such that $S\cap\gamma$ is $\Pi^1_n$-indescribable. For the case in which $n=1$, we say that the \emph{weakly compact reflection principle} holds at $\kappa$ and write $\Refl_{wc}(\kappa)$ if and only if $\kappa$ is a weakly compact cardinal and every weakly compact subset of $\kappa$ has a weakly compact initial segment. Since $\Refl_n(\kappa)$ holds whenever $\kappa$ is $\Pi^1_{n+1}$-indescribable (see the beginning of Section \ref{section_preliminaries} below), and since $\Refl_n(\kappa)$ implies that there are many $\Pi^1_n$-indescribable cardinals below $\kappa$, it follows that the consistency strength of ``$\exists\kappa$ $\Refl_n(\kappa)$'' is strictly greater than the existence of a $\Pi^1_n$-indescribable cardinal but not greater than the existence of a $\Pi^1_{n+1}$-indescribable cardinal. The exact consistency strength of ``$\exists\kappa$ $\Refl_n(\kappa)$'' is not known, but the work of Mekler and Shelah \cite{MR1029909} leads to a conjecture in Section \ref{section_questions} below. Additionally, one may derive the consistency of ``$\exists\kappa \forall n<\omega$ $\Refl_n(\kappa)$'' from the existence of a $\Pi^1_n$-indescribable cardinal of order $\omega$ (see Definition \ref{definition_indescribable_of_order_gamma} below).

In Lemma \ref{lemma_weakly_compacts_are_weakly_compact} below, we observe that $\Refl_n(\kappa)$ implies that $\kappa$ is $\Pi^1_n$-indescribable of order $\omega$. The main result of this article addresses the question: if $\kappa$ is $\Pi^1_n$-indescribable of high order $\gamma<\kappa^+$, does this entail that $\Refl_n(\kappa)$ holds? In the case where $n=1$ we show that the answer to this question is consistently, no.\footnote{In fact, this question can also be answered by using the result of Bagaria-Magidor-Sakai \cite{MR3416912} mentioned above. Although the characterization of $\Pi^1_{n+1}$-indescribable cardinals in $L$ given by Bagaria-Magidor-Sakai does not resemble the reflection principles $\Refl_n(\kappa)$ considered here, Sakai informed the author, after the current article was written, that in $L$, the Bagaria-Magidor-Sakai characterization is equivalent to $\Refl_n(\kappa)$. Bagaria-Magidor-Sakai showed that in $L$, a cardinal $\kappa$ is $\Pi^1_{n+1}$-indescribable if and only if $\Refl_{n}(\kappa)$ holds. Thus, in $L$, $\Refl_n(\kappa)$ fails at the least greatly $\Pi^1_n$-indescribable cardinal.} Suppose $\kappa$ is a weakly compact cardinal. We say that $S\subseteq\kappa$ is a \emph{nonreflecting weakly compact subset of $\kappa$} if and only if $S$ is a weakly compact set having no weakly compact proper initial segment; similarly, one can define the notion of \emph{nonreflecting $\Pi^1_n$-indescribable subset of $\kappa$}. The following theorem establishes the consistency of the failure of the weakly compact reflection principle at $\kappa$ with $\kappa$ being weakly compact of any order $\gamma<\kappa^+$.

\begin{theorem}\label{theorem_main_theorem}
Suppose that $\Refl_\wc(\kappa)$ holds, $\kappa$ is weakly compact of order $(\alpha+1)$ where $\alpha<\kappa^+$ and $\GCH$ holds. Then there is a cofinality-preserving forcing extension in which there is a nonreflecting weakly compact subset of $\kappa$, the class of weakly compact cardinals is preserved and $\kappa$ remains $(\alpha+1)$-weakly compact.
\end{theorem}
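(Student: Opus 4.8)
The plan is to prove the theorem by forcing with a single poset $\P$ that adjoins bounded approximations to a weakly compact subset $W\subseteq\kappa$ together with $\Pi^1_1$-data sealing off every proper initial segment of $W$, and then to verify the three conclusions by lifting the relevant weak compactness embeddings. The poset $\P$ is the natural analogue, for the weakly compact ideal, of the standard forcing to add a non-reflecting stationary set; the new ingredient is the $\Pi^1_1$-witnesses.

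Concretely, I would take a condition of $\P$ to be a pair $p=(w^p,f^p)$, where $w^p\colon\gamma^p\to 2$ for some ordinal $\gamma^p<\kappa$, $\dom(f^p)$ is the set of weakly compact cardinals below $\gamma^p$, and for each such $\beta$ we have $f^p(\beta)=(a^p_\beta,\varphi^p_\beta)$ with $a^p_\beta\subseteq V_\beta$ and $\varphi^p_\beta$ a $\Pi^1_1$-sentence, subject to the requirements that $(V_\beta,\in,a^p_\beta)\models\varphi^p_\beta$ and that for every $\alpha<\beta$ with $w^p(\alpha)=1$ one has $(V_\alpha,\in,a^p_\beta\cap V_\alpha)\not\models\varphi^p_\beta$; order $\P$ by extension in both coordinates. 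Writing $W=\{\alpha<\kappa\colon\exists p\in G\ w^p(\alpha)=1\}$, the union $\bigcup_{p\in G}f^p$ then assigns to each weakly compact $\beta<\kappa$ a $\Pi^1_1$-witness that $W\cap\beta$ is not $\Pi^1_1$-indescribable in $\beta$. By $\GCH$ we have $|\P|=\kappa$, so $\P$ is $\kappa^+$-c.c.; and $\P$ is $<\kappa$-closed, because the coordinatewise union of a $<\kappa$-decreasing sequence of conditions is again a condition, no new reflection requirement being imposed at the supremum of the supports. Hence $\P$ preserves all cofinalities and adds no subsets of $V_\beta$ for any $\beta<\kappa$; the latter shows at once that $\P$ changes neither the weakly compact cardinals below $\kappa$ nor the ideals $\Pi^1_1(\beta)$ for $\beta<\kappa$, so $W$ has no weakly compact proper initial segment in $V[G]$, while for cardinals above $\kappa$ a Levy--Solovay argument (using $|\P|=\kappa$) shows weak compactness is preserved. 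Thus the class of weakly compact cardinals other than $\kappa$ is preserved.

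It remains to show, in $V[G]$, that $W$ is weakly compact — which in particular yields that $\kappa$ stays weakly compact — and that $\kappa$ remains weakly compact of order $(\alpha+1)$. I would prove the order-preservation by induction on the order, via the elementary embedding characterization of weak compactness of order $\gamma$, and the weak compactness of $W$ by a companion lifting argument. Given $A\in V[G]$, fix in $V$ a transitive model $M\models\ZFC^-$ of size $\kappa$ that is closed under $<\kappa$-sequences, with $V_{\kappa+1}\subseteq M$, $\P,\dot A,\dot W\in M$, and $M\models$``$\kappa$ is weakly compact of order $(\alpha+1)$ and $\wcRefl(\kappa)$''; take $j\colon M\to N$ with $\crit(j)=\kappa$ and $N\models$``$\kappa$ is weakly compact of order $\alpha$''. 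Since $\P\in V_\kappa$ we have $j[G]=G$, and $j(\P)$ agrees with $\P$ on conditions with support below $\kappa$, so $j$ lifts to $\bar\jmath\colon M[G]\to N[\bar G]$ provided one builds, in $V[G]$, an $N[G]$-generic filter on the tail of $j(\P)$ below a master condition lying below every element of $G$. The natural master condition is $r_{\ast}=\bigcup G$, a legitimate condition of $j(\P)$ with $\gamma^{r_\ast}=\kappa$, since its witness data for the weakly compact $\beta<\kappa$ are elements of $V_\kappa$, which is unchanged by $\P$ and contained in $N$. The tail of $j(\P)$ below $r_\ast$ is sufficiently closed in $N[G]$ with sufficiently few dense subsets there, so the generic is built by a diagonalization of length $\le\kappa$; the lift then witnesses that $\kappa$ is weakly compact of order $(\alpha+1)$ in $V[G]$, once one checks, by applying the induction hypothesis inside $N$, that $\kappa$ remains weakly compact of order $\alpha$ in $N[\bar G]$. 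To obtain that $W$ itself is weakly compact, I would run the same argument but demand that the master condition force $\kappa\in W$: extending $r_\ast$ to a condition $r_{\ast\ast}$ of $j(\P)$ with $\gamma^{r_{\ast\ast}}>\kappa$ and $w^{r_{\ast\ast}}(\kappa)=1$ makes $\kappa\in\bar\jmath(W)$, whence $W$ is $\Pi^1_1$-indescribable in $\kappa$.

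The main obstacle, and the place where $\wcRefl(\kappa)$ and the hypothesis on the order of $\kappa$ are genuinely used, is the construction of $r_{\ast\ast}$: since $\kappa$ is weakly compact in $N$, any condition of $j(\P)$ with support exceeding $\kappa$ must carry a $\Pi^1_1$-witness $(a_\kappa,\varphi_\kappa)\in N$ certifying that $W=\bar\jmath(W)\cap\kappa$ is not $\Pi^1_1$-indescribable in $\kappa$ as computed in $N$. This does not contradict $W$ being genuinely weakly compact in $V[G]$, because $N$ is a thin inner model and $\Pi^1_1$-truth over $V_\kappa$ is very far from absolute between $N$ and $V[G]$; but actually producing such a witness inside $N$ — which one does by exploiting $\wcRefl(\kappa)$ in $M$, together with the genericity of $W$, to manufacture a $\Pi^1_1$-sentence whose $N$-reflection points all avoid $W$ — is where the real work lies. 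The accompanying bookkeeping, ensuring that a single embedding simultaneously certifies the order of weak compactness in $N$ and admits the master condition needed for $W$, is the other delicate ingredient, and is what makes the induction close.
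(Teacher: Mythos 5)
Your poset is a reasonable analogue of the non-reflecting-stationary-set forcing, and the closure and distributivity observations are fine, but the heart of the proof --- the lifting argument that is supposed to show $W$ (and hence $\kappa$) stays weakly compact --- fails as written. The fatal point is that your master condition is not an element of $N$. Since $\P\subseteq V_\kappa$ and $\crit(j)=\kappa$, the poset $j(\P)$ is a set in $N$, so every condition of $j(\P)$ is an element of $N$. But $r_{*}=\bigcup G$ has first coordinate the characteristic function of the generic set $W$, and $W\notin V\supseteq N$; a fortiori $r_{**}\notin N$. It is irrelevant that the individual witness pairs $(a_\beta,\varphi_\beta)$ lie in $V_\kappa\subseteq N$: the condition is the whole function of domain $\kappa$, and that function is generic over $V$. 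The same obstruction shows that no $N$-generic filter $\bar G\subseteq j(\P)$ with $j[G]\subseteq\bar G$ can contain any condition $q$ with $\gamma^q>\kappa$, since compatibility with every $p\in G$ forces $w^q\restrict\kappa$ to be the characteristic function of $W$, which would put $W$ into $N$; so even if one could lift $j$, one could never arrange $\kappa\in j(W)$. This is exactly why the paper does not force with a single poset at $\kappa$ but with an Easton-support iteration $\P_{\kappa+1}\cong\P_\kappa*\dot{\Q}(\kappa,W)$ having nontrivial stages at Mahlo limit points of $\Tr_\wc(W)$ below $\kappa$: then $j(\P_\kappa)$ factors as $\P_\kappa*\dot{\Q}(\kappa,W)*\dot{\P}^N_{\kappa,j(\kappa)}$, and the stage-$\kappa$ master condition $f\cup\{(\kappa,1)\}$ only needs to lie in $N[\widehat{G}_{j(\kappa)}]$, a model that does contain the generic object.

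A second, independent gap is the step you yourself flag as ``where the real work lies'': producing inside $N$ a witness that $W=j(W)\cap\kappa$ is not weakly compact at $\kappa$, so that the master condition is legal past $\kappa$. You propose to extract this from $\wcRefl(\kappa)$ and the genericity of $W$, but give no construction, and it is doubtful one exists along those lines. The paper obtains the witness by a different device (Lemma \ref{lemma_X_not_weakly_compact_in_N}): among embeddings $j:M\to N$ with $\kappa\in j(W)$, choose one with $j(\kappa)$ minimal; then $W$ is automatically not weakly compact in $N$, and Hamkins' approximation-and-cover lemma (Lemma \ref{lemma_approximation_and_cover}) keeps it non-weakly-compact in $N[\widehat{G}_{j(\kappa)}]$, which is what legitimizes the master condition. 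Without both ingredients --- the iteration that makes the master condition available in the right model, and the minimality trick that supplies the non-weak-compactness witness --- the proposal establishes neither that $W$ is weakly compact in the extension nor that $\kappa$ retains any degree of weak compactness.
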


Notice that the assumption $\Refl_\wc(\kappa)$ allows us to avoid trivialities, since if $\Refl_\wc(\kappa)$ fails then $\kappa$ already has a nonreflecting weakly compact subset. The forcing used in the proof of Theorem \ref{theorem_main_theorem}, adds a nonreflecting weakly compact subset of $\kappa$. Notice that many forcings throughout the literature also add nonreflecting weakly compact subsets to a weakly compact cardinal $\kappa$, such as an Easton-support iteration to turn $\kappa$ into the least weakly compact cardinal by adding nonreflecting stationary subsets to each weakly compact $\gamma<\kappa$; the new features of the forcing developed here is that it preserves the class of weakly compact cardinals and also preserves the fact that $\kappa$ is $(\alpha+1)$-weakly compact. Our proof of Theorem \ref{theorem_main_theorem}, relies heavily on the simple characterization of weak compactness in terms of elementary embeddings, which is reviewed in Lemma \ref{lemma_characterizations_of_weak_compactness}, below. Although one can characterize $\Pi^1_n$-indescribability in terms of elementary embeddings \cite{MR1133077}, it is not known whether or not the techniques used in our proof of Theorem \ref{theorem_main_theorem} generalize.

Another important feature of the proof of Theorem \ref{theorem_main_theorem}, is that we will be concerned with showing that various forcings do not create new instances of weak compactness, a feature which is not present when dealing with forcing to add a nonreflecting stationary set. Our main tool in this regard, will be Hamkins' work showing that extensions with the approximation and cover properties have no new large cardinals \cite{MR2063629}; see Lemma \ref{lemma_approximation_and_cover} below.

Many questions about the weakly compact and $\Pi^1_n$-indescribable reflection principles remain open, some of which are discussed in Section \ref{section_questions} below. We highlight places in which standard arguments seem to break down in this context.

\section{Preliminaries}\label{section_preliminaries}

%Following Hellsten's notation \cite{}, for a cardinal $\kappa$, the $\Pi^1_n$-indescribability ideal, the corresponding positive sets and dual filter are defined respectively as
%\begin{align*}
%\Pi^1_n(\kappa)&=\{X\subseteq\kappa\st \text{$X$ is not $\Pi^1_n$-indescribable}\},\\
%\Pi^1_n(\kappa)^+&=\{X\subseteq\kappa\st \text{$X$ is $\Pi^1_n$-indescribable}\} \text{ and}\\
%\Pi^1_n(\kappa)^*&=\{X\subseteq\kappa\st \text{$\kappa\setminus X$ is not $\Pi^1_n$-indescribable}\}.
%\end{align*}

%For a regular cardinal $\kappa$, we say that a set $S\subseteq\kappa$ is \emph{$\Pi^1_n$-indescribable} if for every $A\subseteq V_\kappa$ and every $\Pi^1_n$-formula $\varphi$, whenever $(V_\kappa,\in,A)\models \varphi$ there exists $\alpha\in S$ such that $(V_\alpha,\in,A\cap V_\alpha)\models \varphi$. We say that $\kappa$ is a \emph{$\Pi^1_n$-indescribable cardinal} if it is a $\Pi^1_n$-indescribable subset of itself. 

Since there is a $\Pi^1_{n+1}$-sentence $\varphi$ such that for any $\gamma$ a set $S\subseteq\gamma$ is $\Pi^1_n$-indescribable if and only if $(V_\gamma,\in,S)\models\varphi$ \cite[Corollary 6.9]{Kanamori:Book}, it follows that $\Refl_n(\kappa)$ holds if $\kappa$ is $\Pi^1_{n+1}$-indescribable. Therefore ``$\exists\kappa\ \Refl_n(\kappa)$'' is consistence relative to the existence of a $\Pi^1_{n+1}$-indescribable cardinal.

Assuming $\kappa$ is a $\Pi^1_n$-indescribable cardinal, the $\Pi^1_n$-ideal 
\[\Pi^1_n(\kappa)=\{X\subseteq\kappa\st \text{$X$ is not $\Pi^1_n$-indescribable}\}\]
is a normal ideal on $\kappa$ \cite{MR0281606}. We will denote the corresponding collection of positive sets by
\[\Pi^1_n(\kappa)^+=\{X\subseteq\kappa\st \text{$X$ is $\Pi^1_n$-indescribable}\}\]
and the dual filter is written as
\[\Pi^1_n(\kappa)^*=\{X\subseteq\kappa\st \text{$\kappa\setminus X$ is not $\Pi^1_n$-indescribable}\}.\] In the case where $n=0$, we obtain the nonstationary ideal $\text{NS}_\kappa=\Pi^1_0(\kappa)$. Thus, in this case, the filter $\Pi^1_0(\kappa)^*$ is generated by the collection of club subsets of $\kappa$ and $\Pi^1_0(\kappa)^+$ is the collection of stationary subsets of $\kappa$. Furthermore, a cardinal $\kappa$ is $\Pi^1_0$-indescribable if and only if $\kappa$ is inaccessible \cite[Proposition 6.3]{Kanamori:Book}. In the case where $n=1$, we sometimes refer to $\Pi^1_1(\kappa)$ as the \emph{weakly compact ideal}. For $0<n<\omega$ one can characterize a natural collection of subsets of $\kappa$ which generates the filter $\Pi^1_n(\kappa)^*$ as follows. We say that a set $C\subseteq\kappa$ is \emph{$0$-club} if and only if $C$ is club. Furthermore, for $n>0$, $C$ is said to be \emph{$n$-club} if $C\in\Pi^1_{n-1}(\kappa)^+$ and whenever $C\cap\alpha\in\Pi^1_{n-1}(\alpha)^+$ we have $\alpha\in C$. Note that, $C$ is $1$-club if and only if $C$ is a stationary subset of $\kappa$ which contains all of its inaccessible stationary reflection points. Under the assumption that $\kappa$ is a $\Pi^1_n$-indescribable cardinal, it follows from the work of Sun \cite{MR1245524} and Hellsten \cite{MR2252250} that $S\in\Pi^1_n(\kappa)^+$ if and only if for every $n$-club $C\subseteq\kappa$ we have $S\cap C\neq\emptyset$. Thus, if $\kappa$ is a $\Pi^1_n$-indescribable cardinal, a set $X\subseteq\kappa$ is in the filter $\Pi^1_n(\kappa)^*$ if and only if $X$ contains an $n$-club.

For $n<\omega$, Hellsten defined an operation $\Tr_n$ on $\mathscr{P}(\kappa)$  analogous to Mahlo's operation\footnote{Hellsten denoted this operation by $M_n$, but here we use $\Tr_n$ to avoid notational confusion since we use $M$ to denote a $\kappa$-model.} by
\[\Tr_n(X)=\{\alpha<\kappa\st X\cap\alpha\in\Pi^1_n(\alpha)^+\},\]
as well as a transitive wellfounded partial order $<^n$ on $\Pi^1_n(\kappa)^+$ analogous to Jech's ordering on stationary sets; for $S,T\in\Pi^1_n(\kappa)^+$ we have
\[S<^n T \iff T\setminus \Tr_n(S)\in \Pi^1_n(\kappa).\]
Hellsten also defined the \emph{order of a $\Pi^1_n$-indescribable set $S\in\Pi^1_n(\kappa)^+$} to be its rank under $<^n$,
\[o_n(S)=\sup\{o_n(X)+1\st \text{$X<^nS$ and $X\in\Pi^1_n(\kappa)^+$}\};\]
and the \emph{order of a $\Pi^1_n$-indescribable cardinal $\kappa$} to be the height of $<^n$,
\[o_n(\kappa)=\sup\{o_n(S)+1\st S\in\Pi^1_n(\kappa)^+\}.\]

Generalizing the notion of great Mahloness from \cite{MR0505505}, Hellsten provided the following (for details see \cite[Definition 2 and Lemma 10]{MR2252250}).

\begin{definition}[\cite{MR2252250}] A cardinal $\kappa$ is \emph{greatly $\Pi^1_n$-indescribable} if and only if $o_n(\kappa)=\kappa^+$.
\end{definition} 

We will find it useful to consider orders of $\Pi^1_n$-indescribability less than $\kappa^+$, generalizing the Mahlo-hierarchy.
\begin{definition}\label{definition_indescribable_of_order_gamma}
Given cardinal $\kappa$ and an ordinal $0<\gamma<\kappa^+$, we say that $\kappa$ is \emph{$\Pi^1_n$-indescribable of order $\gamma$} if and only if $o_n(\kappa)\geq\gamma$.
\end{definition}

In what follows we find it easier to work with another, slightly more concrete, characterization of the order of a $\Pi^1_n$-indescribable cardinal, which we describe in Lemma \ref{lemma_characterization_of_order} below. Working towards this characterization, let us establish a few basic properties of the operation $\Tr_n$, which are implicit in \cite{MR2252250}. Hellsten proved that $\Tr_n$ is a \emph{generalized Mahlo operation} for the $\Pi^1_n$-ideal over a regular cardinal $\kappa$. Among other things, this implies \cite[Theorem 2]{MR2252250} that the $\Pi^1_n$-indescribable filter is closed under the operation $\Tr_{n-1}$, which is analogous to the fact that the club filter is closed under the map taking a set $X\in\mathscr{P}(\kappa)$ to its set of limit points $X'$. As an easy consequence, we obtain the following.

\begin{lemma}\label{lemma_reflection_points_of_a_measure_zero_set}
Suppose $\kappa$ is a $\Pi^1_n$-indescribable cardinal. If $Z\in\Pi^1_n(\kappa)$ then $\Tr_n(Z)\in\Pi^1_n(\kappa)$.
\end{lemma}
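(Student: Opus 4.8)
The plan is to reduce the statement to two facts recorded in the preliminaries: the Sun--Hellsten characterization that for a $\Pi^1_n$-indescribable cardinal $\mu$, a set $S\subseteq\mu$ is in $\Pi^1_n(\mu)^+$ if and only if $S$ meets every $n$-club in $\mu$; and Hellsten's theorem that the filter $\Pi^1_n(\kappa)^*$ is closed under $\Tr_{n-1}$. (I will run the argument for $n\geq 1$; the case $n=0$ is the classical fact that Mahlo's operation sends nonstationary sets to nonstationary sets, and the argument below goes through verbatim with $\Tr_{n-1}(C)$ replaced by the set of limit points of the club $C$.)

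First, since $Z\in\Pi^1_n(\kappa)$, i.e.\ $Z$ is not $\Pi^1_n$-indescribable, the characterization supplies an $n$-club $C\subseteq\kappa$ with $C\cap Z=\emptyset$. Because $C$ is itself an $n$-club we have $C\in\Pi^1_n(\kappa)^*$, so by Hellsten's theorem $\Tr_{n-1}(C)\in\Pi^1_n(\kappa)^*$; equivalently $\kappa\setminus\Tr_{n-1}(C)\in\Pi^1_n(\kappa)$. Since $\Pi^1_n(\kappa)$ is an ideal, it now suffices to prove the inclusion $\Tr_n(Z)\subseteq\kappa\setminus\Tr_{n-1}(C)$, that is, $\Tr_{n-1}(C)\cap\Tr_n(Z)=\emptyset$.

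To see this, fix $\alpha\in\Tr_{n-1}(C)$ and suppose toward a contradiction that $\alpha\in\Tr_n(Z)$, so $Z\cap\alpha\in\Pi^1_n(\alpha)^+$; in particular $\alpha$ is $\Pi^1_n$-indescribable, so the Sun--Hellsten characterization is available at $\alpha$. The key observation is that $C\cap\alpha$ is an $n$-club in $\alpha$: by definition of $\Tr_{n-1}$ we have $C\cap\alpha\in\Pi^1_{n-1}(\alpha)^+$, and if $\beta<\alpha$ satisfies $(C\cap\alpha)\cap\beta=C\cap\beta\in\Pi^1_{n-1}(\beta)^+$, then $\beta\in\Tr_{n-1}(C)\subseteq C$ by the closure property of the $n$-club $C$, hence $\beta\in C\cap\alpha$. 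Moreover $C\cap\alpha$ is disjoint from $Z$ since $C$ is. But then $Z\cap\alpha\in\Pi^1_n(\alpha)^+$ must meet the $n$-club $C\cap\alpha$, contradicting $C\cap Z=\emptyset$. Therefore $\alpha\notin\Tr_n(Z)$, which establishes the desired inclusion and hence $\Tr_n(Z)\in\Pi^1_n(\kappa)$.

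The only delicate point I anticipate is the verification that $C\cap\alpha$ is genuinely an $n$-club in $\alpha$: the downward closure clause relies essentially on the \emph{global} closure property of $C$ (packaged as $\Tr_{n-1}(C)\subseteq C$), and one must note that the hypothesis $Z\cap\alpha\in\Pi^1_n(\alpha)^+$ itself already supplies the $\Pi^1_n$-indescribability of $\alpha$ needed to apply the characterization inside $V_\alpha$.
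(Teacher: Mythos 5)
Your proof is correct and uses the same two ingredients as the paper's own argument (the $n$-club characterization of $\Pi^1_n$-positive sets and closure of the filter $\Pi^1_n(\kappa)^*$ under $\Tr_{n-1}$), with the same key step that $C\cap\alpha$ is an $n$-club of $\alpha$ disjoint from the positive set $Z\cap\alpha$. The paper merely packages it as a global contradiction (picking $\eta\in\Tr_n(Z)\cap\Tr_{n-1}(C)$ under the assumption that $\Tr_n(Z)$ is positive) rather than your direct inclusion $\Tr_n(Z)\subseteq\kappa\setminus\Tr_{n-1}(C)$; the content is identical.
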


\begin{proof}
Suppose $Z\in \Pi^1_n(\kappa)$ and $\Tr_n(Z)\notin\Pi^1_n(\kappa)$. Let $C\subseteq\kappa$ be an $n$-club with $C\cap Z=\emptyset$. Since the filter $\Pi^1_n(\kappa)^*$ is closed under $\Tr_{n-1}$, it follows that 
\[\Tr_{n-1}(C)=\{\alpha<\kappa\st \text{$\alpha$ is $\Pi^1_{n-1}$-indescribable and $C\cap\alpha\in \Pi^1_{n-1}(\alpha)^+$}\}\]
contains an $n$-club subset of $\kappa$ and since $C$ is $n$-club we have $\Tr_{n-1}(C)\subseteq C$. Since $\Tr_n(Z)\in\Pi^1_n(\kappa)^+$ we may choose $\eta\in \Tr_n(Z)\cap \Tr_{n-1}(C)$. Notice that $Z\cap\eta\in\Pi^1_n(\eta)^+$ and $C\cap \eta$ is an $n$-club subset of $\eta$ disjoint from $Z\cap\eta$, a contradiction.
\end{proof}

Following Hellsten's notation, elements of the boolean algebra $\mathscr{P}(\kappa)/\Pi^1_n(\kappa)$ are written as $[S]_n$ where $S\in\Pi^1_n(\kappa)^+$ and $[S]_n$ is the equivalence class of $S$ modulo the ideal $\Pi^1_n(\kappa)$. We let $\leq_n$ denote the usual ordering on $\mathscr{P}(\kappa)/\Pi^1_n(\kappa)$. Hellsten proved \cite[Lemma 6]{MR2252250} that $\Tr_n$ is well defined as a map $\Tr_n:\mathscr{P}(\kappa)/\Pi^1_n(\kappa)\rightarrow \mathscr{P}(\kappa)/\Pi^1_n(\kappa)$ where $\Tr_n([S]_n)=[\Tr_n(S)]_n$, and we observe that this follows directly from Lemma \ref{lemma_reflection_points_of_a_measure_zero_set}. 

\begin{corollary}\label{corollary_Mahlo_respects_subsets_mod_ideal}
Suppose $\kappa$ is a $\Pi^1_n$-indescribable cardinal and $S,T\in \mathscr{P}(\kappa)$. If $S\leq_n T$ then $\Tr_n(S)\leq_n \Tr_n(T)$. Thus, $\Tr_n$ is well defined as a map $\mathscr{P}(\kappa)/\Pi^1_n(\kappa)\to \mathscr{P}(\kappa)/\Pi^1_n(\kappa)$.
\end{corollary}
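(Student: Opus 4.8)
The plan is to reduce the corollary to Lemma~\ref{lemma_reflection_points_of_a_measure_zero_set}, which already does the real work. Recall that $S\leq_n T$ means $[S]_n\leq_n[T]_n$ in $\powerset(\kappa)/\Pi^1_n(\kappa)$, i.e.\ $S\setminus T\in\Pi^1_n(\kappa)$. First I would observe the elementary fact that $\Tr_n$ is monotone as a map on $\powerset(\kappa)$ itself: if $A\subseteq B$ then $A\cap\alpha\subseteq B\cap\alpha$ for every $\alpha<\kappa$, and since $\Pi^1_n(\alpha)^+$ is upward closed (being the complement of an ideal), $A\cap\alpha\in\Pi^1_n(\alpha)^+$ implies $B\cap\alpha\in\Pi^1_n(\alpha)^+$; hence $\Tr_n(A)\subseteq\Tr_n(B)$. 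This monotonicity on the powerset level is what lets us pass to the quotient once we know $\Tr_n$ kills measure-zero sets.

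Second, I would write $S\subseteq (S\cap T)\cup(S\setminus T)$ and therefore $\Tr_n(S)\subseteq \Tr_n((S\cap T)\cup(S\setminus T))$. The key point is to distribute $\Tr_n$ over this union up to the ideal. Since $S\cap T\subseteq T$, monotonicity gives $\Tr_n(S\cap T)\subseteq\Tr_n(T)$, so it suffices to show $\Tr_n((S\cap T)\cup(S\setminus T))\subseteq \Tr_n(S\cap T)\cup\Tr_n(S\setminus T)$ — which again is immediate from the definition, since if $((S\cap T)\cup(S\setminus T))\cap\alpha\in\Pi^1_n(\alpha)^+$ then, as $\Pi^1_n(\alpha)$ is an ideal, one of the two pieces intersected with $\alpha$ must be $\Pi^1_n(\alpha)$-positive. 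Combining, $\Tr_n(S)\subseteq\Tr_n(T)\cup\Tr_n(S\setminus T)$. Now $S\setminus T\in\Pi^1_n(\kappa)$ by hypothesis, so Lemma~\ref{lemma_reflection_points_of_a_measure_zero_set} gives $\Tr_n(S\setminus T)\in\Pi^1_n(\kappa)$, and therefore $\Tr_n(S)\setminus\Tr_n(T)\subseteq\Tr_n(S\setminus T)\in\Pi^1_n(\kappa)$, which is exactly $\Tr_n(S)\leq_n\Tr_n(T)$.

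For the final sentence of the statement — that $\Tr_n$ is well defined on the quotient — I would note this is the symmetric consequence: if $[S]_n=[T]_n$ then both $S\leq_n T$ and $T\leq_n S$, so $\Tr_n(S)\leq_n\Tr_n(T)$ and $\Tr_n(T)\leq_n\Tr_n(S)$, giving $[\Tr_n(S)]_n=[\Tr_n(T)]_n$; hence $\Tr_n([S]_n):=[\Tr_n(S)]_n$ is unambiguous. I do not expect any genuine obstacle here: the only substantive input is Lemma~\ref{lemma_reflection_points_of_a_measure_zero_set}, and the remaining steps are the routine verification that $\Tr_n$ is monotone and respects finite (hence two-term) unions of positive/null decompositions — both of which follow directly from $\Pi^1_n(\alpha)$ being an ideal for each $\alpha$. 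The one place to be slightly careful is to make sure the $\Tr_n$ used throughout is the $\kappa$-version, i.e.\ $\Tr_n(X)=\{\alpha<\kappa:X\cap\alpha\in\Pi^1_n(\alpha)^+\}$, so that all the sets in play are genuinely subsets of $\kappa$ and the quotient $\powerset(\kappa)/\Pi^1_n(\kappa)$ is the right ambient structure.
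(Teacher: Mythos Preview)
Your proof is correct and follows exactly the approach the paper intends: the paper states this corollary without proof, remarking only that it ``follows directly from Lemma~\ref{lemma_reflection_points_of_a_measure_zero_set},'' and your argument spells out precisely those routine details (monotonicity of $\Tr_n$, subadditivity over unions via the ideal property at each $\alpha$, and then applying the lemma to the null piece $S\setminus T$).
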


%\begin{proof}
%Suppose $A\subseteq Z\cup B$ where $Z\in \Pi^1_n(\kappa)$. Then $\Tr_n(A)\subseteq \Tr_n(Z\cup B)=\Tr_n(Z)\cup \Tr_n(B)$ where $\Tr_n(Z)\in\Pi^1_n(\kappa)$.
%\end{proof}

It is well known (see \cite{MR0505505} and \cite[Lemma 3]{MR2252250}), that if $I$ is a normal ideal on $\kappa$ then the diagonal intersection of a collection of ${\leq}\kappa$-many subsets of $\kappa$ is independent of the indexing used, modulo $I$, and this allows one to calculate $\kappa^+$ iterates of $\Tr_n$ as an operation on $\mathscr{P}(\kappa)/\Pi^1_n(\kappa)$. For the readers convenience let us briefly recall how this is done, in the context of the $\Pi^1_n$-indescribable ideal. Given a collection $\{[A_i]_n\st i<\beta\}\subseteq\mathscr{P}(\kappa)/\Pi^1_n(\kappa)$ where $\beta<\kappa^+$, let $f:\beta\to\kappa$ be injective and define $g:\kappa\to\mathscr{P}(\kappa)$ by $g(\alpha)=\kappa$ if $\alpha\notin f[\beta]$ and $g(\alpha)=A_i$ if $f(i)=\alpha$. We define
\[\bigtriangleup\{[A_i]_n\st i<\beta\}=[\textstyle{\bigtriangleup} g]_n\]
where, as usual, $\bigtriangleup g=\{\xi<\kappa\st (\forall i<\xi)\ \xi\in g(i)\}$.  For $A\in\Pi^1_n(\kappa)^+$, we define a sequence $\<\Tr^\alpha_n([A]_n)\st \alpha<\kappa^+\>$ in the boolean algebra $\mathscr{P}(\kappa)/\Pi^1_n(\kappa)$:
\begin{align*}
\Tr^0_n([A]_n)&=[A]_n\\
%\Tr_n([A]_n)&=[\Tr_n(A)]_n\\
\Tr^{\alpha+1}_n([A]_n)&=\Tr_n(\Tr^\alpha_n([A]_n)),\\
\Tr^\gamma_n([A]_n)&= \bigtriangleup\{\Tr^\alpha_n([A]_n) \st\alpha<\gamma\}\text{ if $\gamma<\kappa^+$ is a limit.}  
\end{align*}
Similarly, one may iterate $\Tr_n$ as an operation on $\mathscr{P}(\kappa)$, $\kappa$-many times. Notice that for limits $\gamma<\kappa$, if we let $A_i$ be a representative of the equivalence class $\Tr^i_n([\kappa]_n)$ for $i<\gamma$, then $\Tr^\gamma_n([A]_n)=[\bigcap\{A_i\st i<\gamma\}]_n$. Observe that $\Tr_n(\kappa)$ is the set of $\Pi^1_n$-indescribable cardinals below $\kappa$, and $\Tr_n^2(\kappa)$ is the set of $\Pi^1_n$-indescribable cardinals $\mu<\kappa$ such that $\Tr_n(\kappa)\cap\mu$ is a $\Pi^1_n$-indescribable subset of $\mu$. 

\begin{definition}\label{definition_gamma_indescribable}
For $\gamma\leq\kappa^+$, we say that $\kappa$ is \emph{$\gamma$-$\Pi^1_n$-indescribable} if and only if $\Tr_n^\alpha([\kappa])_n> 0$ for all $\alpha<\gamma$. 
\end{definition}

Thus $\kappa$ is $1$-$\Pi^1_n$-indescribable if and only if $\kappa$ is $\Pi^1_n$-indescribable, $\kappa$ is $2$-$\Pi^1_n$-indescribable if and only if the collection of $\Pi^1_n$-indescribable cardinals less than $\kappa$ is a $\Pi^1_n$-indescribable subset of $\kappa$, etc.

\begin{lemma}
For $\alpha<\kappa$ we have $\Tr^\alpha_n([\kappa]_n)=[\{\gamma<\kappa\st\text{$\gamma$ is $\alpha$-$\Pi^1_n$-indescribable}\}]_n$.
\end{lemma}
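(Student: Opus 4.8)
The plan is to prove the identity by induction on $\alpha<\kappa$, but to run the induction \emph{uniformly over cardinals}: for each $\alpha<\kappa$ I will show that for every $\Pi^1_n$-indescribable cardinal $\mu\le\kappa$ with $\alpha<\mu^+$ one has $\Tr^\alpha_n([\mu]_n)=[I_\alpha\cap\mu]_n$, where $I_\alpha:=\{\gamma<\kappa:\gamma\text{ is }\alpha\text{-}\Pi^1_n\text{-indescribable}\}$; the Lemma is then the case $\mu=\kappa$. Two preliminary remarks carry most of the bookkeeping. First, whether a cardinal $\gamma$ is $\alpha$-$\Pi^1_n$-indescribable is determined by $\gamma$ together with the iterates of $\Tr_n$ computed inside $V_\gamma$ relative to $\Pi^1_n(\gamma)$, so it is absolute and $\{\delta<\mu:\delta\text{ is }\alpha\text{-}\Pi^1_n\text{-indescribable}\}=I_\alpha\cap\mu$. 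Second, directly from Definition \ref{definition_gamma_indescribable}: if $\beta\le\alpha$ then every $\alpha$-$\Pi^1_n$-indescribable cardinal is $\beta$-$\Pi^1_n$-indescribable (so $I_\alpha\subseteq I_\beta$), and a cardinal is $\gamma$-$\Pi^1_n$-indescribable precisely when it is $\beta$-$\Pi^1_n$-indescribable for every $\beta<\gamma$.

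The base case $\alpha=0$ is trivial, since $\Tr^0_n([\kappa]_n)=[\kappa]_n$ and every ordinal below $\kappa$ is vacuously $0$-$\Pi^1_n$-indescribable, so $I_0=\kappa$. For a limit $\gamma<\kappa$ I would invoke the computation recorded just before the Lemma: the sets $I_\alpha$ for $\alpha<\gamma$ represent $\Tr^\alpha_n([\kappa]_n)$ by the inductive hypothesis, so the diagonal intersection defining $\Tr^\gamma_n([\kappa]_n)$ is represented modulo $\Pi^1_n(\kappa)$ by the ordinary intersection $\bigcap_{\alpha<\gamma}I_\alpha$ (the two differ only on a bounded, hence $\Pi^1_n(\kappa)$-null, subset of $\kappa$). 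By the second remark above, $\bigcap_{\alpha<\gamma}I_\alpha=\{\xi<\kappa:(\forall\alpha<\gamma)\ \xi\text{ is }\alpha\text{-}\Pi^1_n\text{-indescribable}\}$ is exactly $I_\gamma$, which completes this case.

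The successor step carries the real content, and is where I expect the difficulty to lie. Assuming $\Tr^\alpha_n([\kappa]_n)=[I_\alpha]_n$, the well-definedness of $\Tr_n$ on $\powerset(\kappa)/\Pi^1_n(\kappa)$ (Corollary \ref{corollary_Mahlo_respects_subsets_mod_ideal}) gives $\Tr^{\alpha+1}_n([\kappa]_n)=[\Tr_n(I_\alpha)]_n$, so it suffices to check $\Tr_n(I_\alpha)=I_{\alpha+1}$. Fix $\gamma<\kappa$. If $\gamma^+\le\alpha$, then for every $\delta<\gamma$ we have $\alpha>\delta^+$, so $\delta$ is not $\alpha$-$\Pi^1_n$-indescribable (the notion being defined only for orders $\le\delta^+$); hence $I_\alpha\cap\gamma=\emptyset\notin\Pi^1_n(\gamma)^+$ and $\gamma$ is not $(\alpha+1)$-$\Pi^1_n$-indescribable, so $\gamma$ lies in neither side. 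Otherwise $\alpha<\gamma^+$, and applying the inductive hypothesis at $\gamma$ for every $\beta\le\alpha$ — this is exactly why the induction must be uniform in the cardinal — we get $\Tr^\beta_n([\gamma]_n)=[I_\beta\cap\gamma]_n$, so $\gamma$ is $(\alpha+1)$-$\Pi^1_n$-indescribable iff $I_\beta\cap\gamma\in\Pi^1_n(\gamma)^+$ for all $\beta\le\alpha$, which by $I_\alpha\subseteq I_\beta$ collapses to the single condition $I_\alpha\cap\gamma\in\Pi^1_n(\gamma)^+$, i.e.\ $\gamma\in\Tr_n(I_\alpha)$. The only delicate points are thus organizational: arranging the induction so that the hypothesis is available at every smaller cardinal for the \emph{same} ordinal $\alpha$, and isolating the degenerate cardinals $\gamma\le\alpha$ at which the $\Tr_n$-iteration at $\gamma$ does not reach stage $\alpha$.
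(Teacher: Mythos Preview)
The paper states this lemma without proof, presumably regarding the induction on $\alpha$ as routine; your argument supplies that induction and the overall strategy is the natural one.

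There is, however, a genuine slip in the bookkeeping. The uniform hypothesis you set up --- that $\Tr^\alpha_n([\mu]_n)=[I_\alpha\cap\mu]_n$ for every $\Pi^1_n$-indescribable $\mu\le\kappa$ with $\alpha<\mu^+$ --- is in fact false once $\alpha\ge\mu$. Take $\alpha=\mu$ (a limit) and use the identity as the injection: then the diagonal intersection representing $\Tr^\mu_n([\mu]_n)$ is $\{\xi<\mu:\xi\text{ is }\xi\text{-}\Pi^1_n\text{-indescribable}\}$, which is in $\Pi^1_n(\mu)^+$ whenever $\mu$ is $(\mu+1)$-$\Pi^1_n$-indescribable; yet under your convention $I_\mu\cap\mu=\emptyset$, since every $\xi<\mu$ has $\xi^+<\mu$. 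So the claimed equality fails outright at this stage. You only wrote the limit step for $\mu=\kappa$, and the diagonal-equals-intersection identification you invoke there genuinely requires the limit ordinal to be below $\mu$.

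The fix is painless and also eliminates your degenerate-case analysis. Restrict the uniform hypothesis to $\alpha<\mu$ rather than $\alpha<\mu^+$. In the successor step it then suffices to verify $\Tr_n(I_\alpha)$ and $I_{\alpha+1}$ agree on $\{\gamma:\gamma>\alpha\}$, which is co-bounded and hence $\Pi^1_n(\kappa)$-co-null; for such $\gamma$ every $\beta\le\alpha$ satisfies $\beta<\gamma$, so the restricted hypothesis at $\gamma$ applies and your computation goes through verbatim. The case $\gamma^+\le\alpha$ (and the awkward appeal to ``the notion being defined only for orders $\le\delta^+$'') simply disappears.
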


%$\kappa$ is $(\alpha+1)$-$\Pi^1_n$-indescribable if and only if $[A_\alpha]_n>0$.

We now show that these two concepts of orders of indescribability are equivalent.

\begin{lemma}\label{lemma_characterization_of_order}
Suppose $\kappa$ is a $\Pi^1_n$-indescribable cardinal and $\gamma\leq\kappa^+$. Then $\kappa$ is $\Pi^1_n$-indescribable with order $\gamma$ if and only if $\kappa$ is $\gamma$-$\Pi^1_n$-indescribable. In other words, $o_n(\kappa)\geq\gamma$ if and only if for all $\alpha<\gamma$ we have $\Tr_n^\alpha([\kappa])>0$ in the boolean algebra $P(\kappa)/\Pi^1_n(\kappa)$.
\end{lemma}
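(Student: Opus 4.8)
The plan is to isolate and prove by induction on $\beta$ the following sublemma: \emph{for every ordinal $\beta<\kappa^+$ and every $S\in\Pi^1_n(\kappa)^+$, we have $o_n(S)\geq\beta$ if and only if $[S]_n\leq_n\Tr_n^\beta([\kappa]_n)$.} Granting this, the lemma is immediate. If $o_n(\kappa)\geq\gamma$, then for each $\alpha<\gamma$, since $o_n(\kappa)=\sup\{o_n(S)+1:S\in\Pi^1_n(\kappa)^+\}>\alpha$, there is some $S\in\Pi^1_n(\kappa)^+$ with $o_n(S)\geq\alpha$, so by the sublemma $[S]_n\leq_n\Tr_n^\alpha([\kappa]_n)$ and hence $\Tr_n^\alpha([\kappa]_n)\geq[S]_n>0$. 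Conversely, if $\Tr_n^\alpha([\kappa]_n)>0$ for all $\alpha<\gamma$, then for each such $\alpha$ fix a representative $S_\alpha$ of $\Tr_n^\alpha([\kappa]_n)$; then $S_\alpha\in\Pi^1_n(\kappa)^+$ and $[S_\alpha]_n\leq_n\Tr_n^\alpha([\kappa]_n)$, so the sublemma gives $o_n(S_\alpha)\geq\alpha$ and thus $o_n(\kappa)\geq\alpha+1$, and since this holds for all $\alpha<\gamma$ we conclude $o_n(\kappa)\geq\gamma$.

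For the sublemma, the base case $\beta=0$ is trivial since always $[S]_n\leq_n[\kappa]_n=\Tr_n^0([\kappa]_n)$. For the successor step $\beta=\delta+1$, recall that $o_n(S)\geq\delta+1$ holds precisely when there is some $X\in\Pi^1_n(\kappa)^+$ with $X<^nS$ and $o_n(X)\geq\delta$; by definition $X<^nS$ means $[S]_n\leq_n\Tr_n([X]_n)$, and by the induction hypothesis $o_n(X)\geq\delta$ is equivalent to $[X]_n\leq_n\Tr_n^\delta([\kappa]_n)$. Using the monotonicity of $\Tr_n$ on $\powerset(\kappa)/\Pi^1_n(\kappa)$ (Corollary \ref{corollary_Mahlo_respects_subsets_mod_ideal}), the existence of such an $X$ is equivalent to $[S]_n\leq_n\Tr_n(\Tr_n^\delta([\kappa]_n))=\Tr_n^{\delta+1}([\kappa]_n)$: in one direction apply $\Tr_n$ to $[X]_n\leq_n\Tr_n^\delta([\kappa]_n)$ and chain inequalities; in the other direction, first observe that $\Tr_n^\delta([\kappa]_n)>0$ (otherwise its $\Tr_n$-image would be $0$ by Lemma \ref{lemma_reflection_points_of_a_measure_zero_set}, contradicting $[S]_n>0$), and then take $X$ to be a representative of $\Tr_n^\delta([\kappa]_n)$, for which $X<^nS$ and $o_n(X)\geq\delta$ both hold.

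The limit case $\beta=\lambda$ is where the work lies and where normality of $\Pi^1_n(\kappa)$ enters. Since $\lambda$ is a limit, $o_n(S)\geq\lambda$ iff $o_n(S)\geq\alpha$ for every $\alpha<\lambda$, which by the induction hypothesis is equivalent to $[S]_n\leq_n\Tr_n^\alpha([\kappa]_n)$ for every $\alpha<\lambda$; so it remains to check that this holds iff $[S]_n\leq_n\Tr_n^\lambda([\kappa]_n)=\bigtriangleup\{\Tr_n^\alpha([\kappa]_n):\alpha<\lambda\}$. One implication is the standard fact that a diagonal intersection lies below each of its terms modulo the ideal. For the other, suppose $[S]_n$ is below every $\Tr_n^\alpha([\kappa]_n)$, fix representatives $A_\alpha$ and the associated function $g$ used to form $\bigtriangleup g$, and suppose toward a contradiction that $S\setminus\bigtriangleup g\in\Pi^1_n(\kappa)^+$; the map sending $\xi\in S\setminus\bigtriangleup g$ to the least $\alpha<\xi$ with $\xi\notin g(\alpha)$ is regressive, hence by Fodor's lemma for $\Pi^1_n(\kappa)$ it is constant with some value $f(i_0)$ on a $\Pi^1_n$-indescribable set $W$ (the value cannot lie outside $\ran f$, since there $g$ is constantly $\kappa$), whence $W\subseteq S\setminus A_{i_0}$, contradicting $S\setminus A_{i_0}\in\Pi^1_n(\kappa)$, which holds because $[S]_n\leq_n\Tr_n^{i_0}([\kappa]_n)$. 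I expect this limit step to be the only point requiring genuine care — in particular one should phrase the argument so that it works uniformly whether $\lambda<\kappa$ or $\kappa\leq\lambda<\kappa^+$ — while the base and successor cases amount to routine manipulation of the definitions of $<^n$ and $\Tr_n$.
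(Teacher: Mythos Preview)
Your proof is correct. The overall strategy---induction along the iterates of $\Tr_n$, using monotonicity (Corollary~\ref{corollary_Mahlo_respects_subsets_mod_ideal}) at successors and normality at limits---is the same as the paper's, but your organization is tighter. The paper treats the two directions separately: for the reverse it simply observes that $S<^n\Tr_n(S)$ whenever $\Tr_n(S)$ is positive, and for the forward direction it fixes, for each $\delta<\gamma$, a specific $<^n$-increasing chain $\langle X_\alpha:\alpha<\delta\rangle$ and shows by induction that $[X_\alpha]_n\leq_n\Tr_n^\alpha([\kappa]_n)$, carrying along explicit error sets $Z_\alpha\in\Pi^1_n(\kappa)$ and handling the limit case via diagonal intersections and unions of these error sets. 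Your sublemma ``$o_n(S)\geq\beta$ iff $[S]_n\leq_n\Tr_n^\beta([\kappa]_n)$'' packages both directions at once, applies to an arbitrary positive $S$ rather than only to members of a fixed chain, and replaces the bookkeeping of error sets at limits by a direct appeal to Fodor for the normal ideal $\Pi^1_n(\kappa)$. The payoff is a cleaner statement that immediately yields the lemma and in fact slightly more (a level-by-level characterization of $o_n(S)$); the paper's version, on the other hand, makes the role of the chain witnessing $o_n(X)=\delta$ more explicit.
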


\begin{proof}
The reverse direction follows easily from the fact that if $S\in\Pi^1_n(\kappa)^+$ then $S<^n \Tr_n(S)$.

For the forward direction, suppose $\kappa$ is $\Pi^1_n$-indescribable with order $\gamma<\kappa^+$. Then for each $\delta<\gamma$ there exists a $\Pi^1_n$-indescribable set $X\in \Pi^1_n(\kappa)^+$ such that $o_n(X)=\delta$. This implies that there is a strictly increasing chain $\<X_\alpha\st \alpha<\delta\>$ in the poset $(\Pi^1_n(\kappa)^+,<^n)$ below $X$; thus $\alpha<\beta<\delta$ implies $X_\beta\setminus \Tr_n(X_\alpha)\in \Pi^1_n(\kappa)$. We must show that in the boolean algebra $\mathscr{P}(\kappa)/\Pi^1_n(\kappa)$, we have $\Tr^\alpha_n([\kappa]_n)>0$ for all $\alpha<\delta$.

For each $\alpha<\delta$, let $A_\alpha$ be a representative of the equivalence class $\Tr^\alpha_n([\kappa]_n)$. We will use induction to prove that $[X_\alpha]_n\leq_n [A_\alpha]_n$ for all $\alpha<\delta$. It will suffice to inductively construct a sequence $\vec{Z}=\<Z_\alpha\st \alpha <\delta\>$ of sets in $\Pi^1_n(\kappa)$ such that for each $\alpha<\delta$ we have $X_\alpha\subseteq A_\alpha\cup Z_\alpha$. Clearly $[X_0]_n\leq_n[\kappa]_n$ and we let $Z_0=X_0\setminus\kappa=\emptyset$. 

If $\alpha=\beta+1$ is a successor, then we have $X_{\beta+1}\leq_n \Tr_n(X_\beta)$ and by the inductive hypothesis $[X_\beta]_n\leq_n [A_\beta]_n$. By Corollary \ref{corollary_Mahlo_respects_subsets_mod_ideal}, we have $\Tr_n([X_\beta])\leq_n \Tr_n([A_\beta]_n)=\Tr^{\beta+1}_n([\kappa])$ and thus $[X_{\beta+1}]_n\leq_n \Tr^{\beta+1}_n([\kappa]_n)=[A_{\beta+1}]_n$. We let $Z_{\beta+1}=X_{\beta+1}\setminus A_{\beta+1}\in \Pi^1_n(\kappa)$. Notice that $X_{\beta+1}\subseteq A_{\beta+1}\cup Z_{\beta+1}$. 

At limit stages $\alpha<\kappa^+$ we inductively build a sequence $\vec{z}=\<z_\beta\st \beta\leq \alpha\>$ of sets in $\Pi^1_n(\kappa)$ such that for all $\beta\leq \alpha$, $X_\alpha\subseteq A_\beta\cup z_\beta$. Let $z_0=\emptyset$. If $\beta=\xi+1<\alpha$ is a successor, by assumption we have $X_\alpha\leq_n \Tr_n(X_\xi)$ and inductively we have $[X_\xi]_n\leq_n [A_\xi]_n$. By Corollary \ref{corollary_Mahlo_respects_subsets_mod_ideal}, $\Tr_n([X_\xi]_n)\leq_n \Tr_n([A_\xi]_n)=[A_{\xi+1}]_n$, which implies that $[X_\alpha]_n\leq_n [A_{\xi+1}]_n$. Hence there exists $z_{\xi+1}\in \Pi^1_n(\kappa)$ such that $X_\alpha\subseteq A_{\xi+1}\cup z_{\xi+1}$. If $\beta\leq\alpha$ is a limit, fix an injection $f:\beta\to\kappa$ and define two functions $g_A,g_z:\kappa\to\mathscr{P}(\kappa)$ by
\begin{align*}
g_A(\xi)=\begin{cases}
\kappa&\text{if $\xi\notin f[\beta]$}\\
A_i&\text{if $f(i)=\xi$}
\end{cases}
\end{align*}
and
\begin{align*}
g_z(\xi)=\begin{cases}
\kappa&\text{if $\xi\notin f[\beta]$}\\
z_i&\text{if $f(i)=\xi$}
\end{cases}
\end{align*}

Now let $z'_\beta=\bigtriangledown\{z_\xi\st\xi<\beta\}=\bigtriangledown g_z$ and notice that $z'_\beta\in\Pi^1_n(\kappa)$ by normality.  By the inductive hypothesis, $X_\alpha\subseteq A_i\cup z_i$ for all $i<\beta$, and hence
\[
X_\alpha\subseteq \left(\bigtriangleup g_A\right)\cup\left(\bigtriangledown g_z\right) =_n A_\beta\cup z'_\beta.
\]
Thus, we may let $z_\beta\in\Pi^1_n(\kappa)$ be such that $X_\alpha\subseteq A_\beta\cup z_\beta$. This defines the sequence $\vec{z}=\<z_\beta\st \beta\leq \alpha\>$. We let $Z_\alpha=z_\alpha$ and note that $X_\alpha\subseteq A_\alpha\cup Z_\alpha$ holds by construction.
\end{proof}

\section{Basic consequences of the $\Pi^1_n$-reflection principle}

\begin{lemma}\label{lemma_trace_of_wc_is_wc}
Suppose $\Refl_n(\kappa)$ holds and $S\in \Pi^1_n(\kappa)^+$. Then \[\Tr_n(S)=\{\alpha<\kappa\st S\cap\alpha\in \Pi^1_n(\alpha)^+\}\]
is a $\Pi^1_n$-indescribable subset of $\kappa$. 
\end{lemma}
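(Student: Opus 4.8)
The plan is to argue by contradiction: suppose $\Tr_n(S) \in \Pi^1_n(\kappa)$, so there is an $n$-club $C \subseteq \kappa$ with $C \cap \Tr_n(S) = \emptyset$, i.e. for every $\alpha \in C$ we have $S \cap \alpha \notin \Pi^1_n(\alpha)^+$. First I would record that, since $S \in \Pi^1_n(\kappa)^+$ and $C$ contains an $n$-club, $S \cap C \in \Pi^1_n(\kappa)^+$ (using the fact from the preliminaries that $\Pi^1_n$-positive sets meet every $n$-club, and more precisely that the intersection of a positive set with an $n$-club is positive). Now apply $\Refl_n(\kappa)$ to the positive set $S \cap C$: there is $\gamma < \kappa$ such that $(S \cap C) \cap \gamma \in \Pi^1_n(\gamma)^+$. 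In particular $S \cap \gamma \supseteq (S \cap C) \cap \gamma$ is $\Pi^1_n$-indescribable over $\gamma$ — wait, that direction is fine since a superset of a positive set is positive — so $S \cap \gamma \in \Pi^1_n(\gamma)^+$, which means $\gamma \in \Tr_n(S)$.

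The remaining point is that $\gamma \in C$, which together with $\gamma \in \Tr_n(S)$ contradicts $C \cap \Tr_n(S) = \emptyset$. To see $\gamma \in C$: we have $(S \cap C) \cap \gamma = (S \cap \gamma) \cap (C \cap \gamma) \in \Pi^1_n(\gamma)^+$, so in particular $C \cap \gamma \in \Pi^1_n(\gamma)^+$ (again a superset of a positive set), and since $C$ is $n$-club, by definition of $n$-club ($C \cap \alpha \in \Pi^1_{n-1}(\alpha)^+$ implies $\alpha \in C$) — here I need $\Pi^1_n(\gamma)^+ \subseteq \Pi^1_{n-1}(\gamma)^+$, which holds because every $\Pi^1_{n-1}$-formula is a $\Pi^1_n$-formula, so a $\Pi^1_n$-indescribable set is $\Pi^1_{n-1}$-indescribable. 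Hence $C \cap \gamma \in \Pi^1_{n-1}(\gamma)^+$ forces $\gamma \in C$, completing the contradiction.

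I expect the only genuinely delicate point to be making sure the right ``$n$-club'' versus ``$(n-1)$-club'' / positivity bookkeeping is in place — specifically the monotonicity $\Pi^1_n(\gamma)^+ \subseteq \Pi^1_{n-1}(\gamma)^+$ and the definition of $n$-club being phrased in terms of $\Pi^1_{n-1}$-positivity of initial segments — and in handling the degenerate case $n=0$ separately if needed (where $1$-clubs and $0$-clubs and the relevant reflection points must be checked directly against the stationary-set definitions given in the introduction). Everything else is a routine unwinding of the definitions of $\Tr_n$, $n$-club, and $\Refl_n(\kappa)$, so I would keep the write-up short.
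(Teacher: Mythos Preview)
Your argument is correct and follows essentially the same route as the paper: fix an $n$-club $C$, use that $S\cap C\in\Pi^1_n(\kappa)^+$, apply $\Refl_n(\kappa)$ to get $\gamma$ with $(S\cap C)\cap\gamma\in\Pi^1_n(\gamma)^+$, and then conclude both $\gamma\in\Tr_n(S)$ and $\gamma\in C$ via the $\Pi^1_{n-1}$-closure of $n$-clubs. The only cosmetic difference is that the paper phrases it as a direct proof that $\Tr_n(S)\cap C\neq\emptyset$ for every $n$-club $C$, rather than as a contradiction.
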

\begin{proof}
Fix an $n$-club $C\subseteq\kappa$. We will show that $\Tr_n(S)\cap C\neq\emptyset$. Since $\kappa$ is a $\Pi^1_n$-indescribable cardinal, the filter $\Pi^1_n(\kappa)^*$ is normal and hence the intersection of fewer than $\kappa$ many $n$-club subsets of $\kappa$ contains an $n$-club. Thus, $S\cap C$ is a $\Pi^1_n$-indescribable subset of $\kappa$. Since $\Refl_n(\kappa)$ holds, there is an $\alpha<\kappa$ such that $S\cap C\cap \alpha\in \Pi^1_n(\alpha)^+$. This implies that $S\cap\alpha\in \Pi^1_n(\alpha)^+$. Furthermore, $C\cap\alpha$ is a $\Pi^1_{n-1}$-indescribable subset of $\alpha$ and thus $\alpha\in C$ because $C$ is $n$-club. Thus $\alpha\in \Tr_n(S)\cap C$.
\end{proof}

The next lemma establishes that $\Refl_n(\kappa)$ implies that $\kappa$ is $\omega$-$\Pi^1_n$-indescribable.

\begin{lemma}\label{lemma_weakly_compacts_are_weakly_compact}
Suppose $\Refl_n(\kappa)$ holds. Then the set 
\[\Ind^1_n\cap\kappa=\{\alpha<\kappa\st\textrm{$\alpha$ is $\Pi^1_n$-indescribable}\}\] is a $\Pi^1_n$-indescribable subset of $\kappa$. Thus, by Lemma \ref{lemma_trace_of_wc_is_wc}, for each $m<\omega$ we have $\Tr_n^m(\Ind^1_n\cap\kappa)\in \Pi^1_n(\kappa)^+$; in other words, $\kappa$ is $\omega$-$\Pi^1_n$-indescribable. 
%Thus, the collection $\WC_\kappa\cap\WC'_\kappa$ is also a weakly compact subset of $\kappa$.\marginpar{\tiny ...and more?}

\end{lemma}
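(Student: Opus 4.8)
The plan is to prove that $\Ind^1_n \cap \kappa \in \Pi^1_n(\kappa)^+$ directly from the definition of $\Pi^1_n$-indescribability, and then apply the two earlier lemmas to conclude $\omega$-$\Pi^1_n$-indescribability. First I would argue that $\Ind^1_n \cap \kappa$ is $\Pi^1_n$-indescribable: since $\Refl_n(\kappa)$ holds, in particular $\kappa$ itself is $\Pi^1_n$-indescribable, so $\kappa \in \Pi^1_n(\kappa)^+$; applying $\Refl_n(\kappa)$ to the $\Pi^1_n$-indescribable set $S = \kappa$ yields some $\gamma < \kappa$ with $\kappa \cap \gamma = \gamma \in \Pi^1_n(\gamma)^+$, i.e. $\gamma$ is $\Pi^1_n$-indescribable. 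This already shows $\Tr_n(\kappa) \neq \emptyset$, but we need it to be in $\Pi^1_n(\kappa)^+$. To get that, I would use the characterization recalled in the preliminaries: a set $S \subseteq \kappa$ is in $\Pi^1_n(\kappa)^+$ iff $S \cap C \neq \emptyset$ for every $n$-club $C \subseteq \kappa$. So fix an arbitrary $n$-club $C \subseteq \kappa$. Since $C \in \Pi^1_n(\kappa)^+$ (being $n$-club, it is in particular $\Pi^1_n$-indescribable) and $\Refl_n(\kappa)$ holds, there is $\gamma < \kappa$ with $C \cap \gamma \in \Pi^1_n(\gamma)^+$. In particular $C \cap \gamma$ is $\Pi^1_n$-indescribable as a subset of $\gamma$, which in turn forces $\gamma$ to be $\Pi^1_n$-indescribable (one cannot have a $\Pi^1_n$-indescribable subset of a non-$\Pi^1_n$-indescribable ordinal — indeed $\gamma$ must at least be inaccessible, and the reflection of the relevant $\Pi^1_n$-sentences goes through). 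Moreover $C$ is $n$-club and $C \cap \gamma \in \Pi^1_{n-1}(\gamma)^+$ (since $\Pi^1_n(\gamma)^+ \subseteq \Pi^1_{n-1}(\gamma)^+$), so by the definition of $n$-club we get $\gamma \in C$. Hence $\gamma \in (\Ind^1_n \cap \kappa) \cap C$, and since $C$ was an arbitrary $n$-club, $\Ind^1_n \cap \kappa \in \Pi^1_n(\kappa)^+$.

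Once $\Ind^1_n \cap \kappa \in \Pi^1_n(\kappa)^+$ is established, the remainder is an immediate induction using Lemma \ref{lemma_trace_of_wc_is_wc}: for $m = 0$ the set $\Tr_n^0(\Ind^1_n \cap \kappa) = \Ind^1_n \cap \kappa$ is in $\Pi^1_n(\kappa)^+$ by the first part; and if $\Tr_n^m(\Ind^1_n \cap \kappa) \in \Pi^1_n(\kappa)^+$, then applying Lemma \ref{lemma_trace_of_wc_is_wc} to this set gives $\Tr_n^{m+1}(\Ind^1_n \cap \kappa) = \Tr_n(\Tr_n^m(\Ind^1_n \cap \kappa)) \in \Pi^1_n(\kappa)^+$. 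Finally, to read off that $\kappa$ is $\omega$-$\Pi^1_n$-indescribable in the sense of Definition \ref{definition_gamma_indescribable}, one notes that for each $m < \omega$ we have $\Tr_n^m([\kappa]_n) \geq_n [\Tr_n^m(\Ind^1_n \cap \kappa)]_n$ (since $\Ind^1_n \cap \kappa \subseteq \kappa$ and $\Tr_n$ is monotone modulo the ideal by Corollary \ref{corollary_Mahlo_respects_subsets_mod_ideal}), hence $\Tr_n^m([\kappa]_n) > 0$ for all $m < \omega$, which is exactly $\omega$-$\Pi^1_n$-indescribability; equivalently one can just cite Lemma \ref{lemma_characterization_of_order}.

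The one step requiring a little care — and the place I expect a referee to want a word of justification — is the claim that a $\Pi^1_n$-indescribable subset of $\gamma$ (for $n \geq 1$) entails that $\gamma$ is itself $\Pi^1_n$-indescribable, and more precisely that the $n$-club property transfers $C \cap \gamma \in \Pi^1_n(\gamma)^+$ to membership $\gamma \in C$. The second point is literally the definition of $n$-club once we observe $\Pi^1_n(\gamma)^+ \subseteq \Pi^1_{n-1}(\gamma)^+$, so no work is needed there. The first point is a standard fact (a subset of $\gamma$ can be $\Pi^1_n$-indescribable only if $\gamma$ is $\Pi^1_n$-indescribable, since e.g. taking the witnessing $\Pi^1_{n+1}$-sentence for $\Pi^1_n$-indescribability one reflects it down); I would simply remark on it in one line. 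Apart from that, the argument is a routine combination of the definitions with Lemmas \ref{lemma_trace_of_wc_is_wc} and the earlier structural lemmas, so there is no genuine obstacle.
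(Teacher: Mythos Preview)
Your argument is correct and follows essentially the same route as the paper: fix an $n$-club $C$, apply $\Refl_n(\kappa)$ to $C\in\Pi^1_n(\kappa)^+$ to obtain $\gamma$ with $C\cap\gamma\in\Pi^1_n(\gamma)^+$, and observe that this forces both $\gamma\in\Ind^1_n\cap\kappa$ and (via the $n$-club closure property) $\gamma\in C$. Your additional elaboration on the induction for $\omega$-$\Pi^1_n$-indescribability and the justification that $\Pi^1_n(\gamma)^+\neq\emptyset$ implies $\gamma$ is $\Pi^1_n$-indescribable are details the paper leaves implicit, but the core proof is the same.
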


\begin{proof}
Suppose $\Refl_n(\kappa)$ holds. Let $C\subseteq\kappa$ be an $n$-club. We will prove that $(\Ind^1_n\cap\kappa)\cap C\neq\emptyset$. Since $C\in\Pi^1_n(\kappa)^*\subseteq\Pi^1_n(\kappa)^+$ is $\Pi^1_n$-indescribable, $\Refl_n(\kappa)$ implies that there is a $\mu<\kappa$ such that $C\cap\mu\in \Pi^1_n(\mu)^+$, and hence $\mu\in \Ind^1_n\cap\kappa$. Since $C\cap\mu$ is, in particular, a $\Pi^1_{n-1}$-indescribable subset of $\mu$, and since $C$ is an $n$-club we have $\mu\in C$. Thus $\mu\in(\Ind^1_n\cap\kappa)\cap C\neq\emptyset$.
\end{proof}

We now restrict our attention to weak compactness. Below we will write $\Tr_\wc$ to denote the operation $\Tr_1$; that is, if $X\in\mathscr{P}(\kappa)$ then $\Tr_\wc(X)=\{\alpha<\kappa\st X\cap\alpha\in\Pi^1_1(\alpha)^+\}$. In contrast to Lemma \ref{lemma_X_not_weakly_compact_in_N} below, under $\Refl_\wc(\kappa)$ we obtain the following.

\begin{corollary}
Suppose $\Refl_{wc}(\kappa)$ holds. If $S\in \Pi^1_1(\kappa)^+$ then for every $A\subseteq\kappa$ there is a $\kappa$-model $M$ with $\kappa,A,S\in M$ and an elementary embedding $j:M\to N$ with critical point $\kappa$ such that $N\models$ $S\in (\Pi^1_1(\kappa)^+)^N$.
\end{corollary}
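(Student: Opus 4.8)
The plan is to reduce the statement to Lemma~\ref{lemma_trace_of_wc_is_wc} together with the embedding characterization of $\Pi^1_1$-indescribable subsets of $\kappa$ recorded in Lemma~\ref{lemma_characterizations_of_weak_compactness}. Since $\wcRefl(\kappa)$ is exactly $\Refl_1(\kappa)$ and $S\in\Pi^1_1(\kappa)^+$, Lemma~\ref{lemma_trace_of_wc_is_wc} in the case $n=1$ gives that
\[\Tr_\wc(S)=\{\alpha<\kappa\st S\cap\alpha\in\Pi^1_1(\alpha)^+\}\in\Pi^1_1(\kappa)^+.\]
Now fix $A\subseteq\kappa$. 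I would apply the embedding characterization of weakly compact subsets to the weakly compact set $\Tr_\wc(S)$, coding $A$ and $S$ together into a single subset of $\kappa$ (or simply using the fact that a $\kappa$-model may be taken to contain any prescribed finite list of sets), to obtain a $\kappa$-model $M$ with $\kappa,A,S,\Tr_\wc(S)\in M$ and an elementary embedding $j\colon M\to N$ into a $\kappa$-model $N$ with $\crit(j)=\kappa$ and $\kappa\in j(\Tr_\wc(S))$.

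It then remains to read off what membership of $\kappa$ in $j(\Tr_\wc(S))$ says. The point is that for every $\alpha<\kappa$ the statement ``$S\cap\alpha\in\Pi^1_1(\alpha)^+$'' is absolute between $M$ and $V$: it refers only to subsets of $V_\alpha$ and to $V_{\alpha+1}\subseteq V_\kappa\subseteq M$, all of which belong to $M$. Hence the set that $M$ computes from the displayed defining formula is exactly $\Tr_\wc(S)$, so $M\models$ ``$\Tr_\wc(S)=\{\alpha<\kappa\st S\cap\alpha\in\Pi^1_1(\alpha)^+\}$''. Applying $j$ and using elementarity, $N$ satisfies that $j(\Tr_\wc(S))$ is the set of $\alpha<j(\kappa)$ with $j(S)\cap\alpha\in\Pi^1_1(\alpha)^+$, and since $\crit(j)=\kappa$ we have $j(S)\cap\kappa=S$. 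Thus $\kappa\in j(\Tr_\wc(S))$ yields $N\models S\in(\Pi^1_1(\kappa)^+)^N$, as desired.

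I do not foresee a serious obstacle: the argument is essentially an application of the ``trace is weakly compact'' lemma funneled through the embedding characterization. The only two points that call for care are (i) arranging that $S$ itself, and not merely $\Tr_\wc(S)$, lies in $M$, and (ii) the absoluteness of ``$S\cap\alpha\in\Pi^1_1(\alpha)^+$'' for $\alpha<\kappa$ between $M$ and $V$ (and, correspondingly, the correct behaviour of $j$ on the defining formula); both are routine given that all witnesses and counterexamples to $\Pi^1_1$-indescribability over $V_\alpha$ lie inside $V_\kappa\subseteq M$.
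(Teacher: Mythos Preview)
Your proposal is correct and follows essentially the same approach as the paper: apply Lemma~\ref{lemma_trace_of_wc_is_wc} to get $\Tr_\wc(S)\in\Pi^1_1(\kappa)^+$, then use the embedding characterization of weak compactness with $\Tr_\wc(S)$ in $M$ so that $\kappa\in j(\Tr_\wc(S))$ yields $N\models S\in(\Pi^1_1(\kappa)^+)^N$. You spell out the absoluteness of ``$S\cap\alpha\in\Pi^1_1(\alpha)^+$'' between $M$ and $V$ more explicitly than the paper, which simply asserts the conclusion in one line; this extra care is appropriate but does not constitute a different argument.
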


\begin{proof}
Suppose $\Refl_{wc}(\kappa)$ holds and $X\in \Pi^1_1(\kappa)^+$. By Lemma \ref{lemma_trace_of_wc_is_wc}, we have $\Tr_\wc(S)\in \Pi^1_1(\kappa)^+$. Fix $A\subseteq\kappa$. There exists a $\kappa$-model $M$ with $\kappa,A,S,\Tr_\wc(S)\in M$ and an elementary embedding $j:M\to N$ with critical point $\kappa$ such that $\kappa\in j(\Tr_\wc(S))$; in other words, in $N$, the set $S=j(S)\cap\kappa$ is a weakly compact subset of $\kappa$.
\end{proof}

\section{Weak compactness and forcing}

In what follows the elementary embedding characterization of weak compactness, i.e. $\Pi^1_1$-indescribability, is an essential ingredient, and here we review the required properties of this characterization. We say that $M$ is a \emph{$\kappa$-model} if and only if $M$ is a transitive model of $\ZFC^-$ such that $|M|=\kappa$, $\kappa\in M$ and $M^{<\kappa}\cap V\subseteq M$.

The following lemma, essentially due to Baumgartner \cite{MR0540770}, is well known. For details see one may consult \cite[Theorem 4.13]{MR2026390}, \cite[Theorem 6.4]{Kanamori:Book} or \cite[Theorem 16.1]{Cummings:Handbook}.

\begin{lemma}\label{lemma_characterizations_of_weak_compactness}
Given a set $S\in \mathscr{P}(\kappa)$, the following are equivalent.
\begin{enumerate}
\item $S$ is $\Pi^1_1$-indescribable.
\item For every $\kappa$-model $M$ with $\kappa,S\in M$ there is an elementary embedding $j:M\to N$ with critical point $\kappa$ such that $\kappa\in j(S)$ and $N$ is also a $\kappa$-model.
\item For any $\kappa$-model $M$ with $S\in M$, then there is a $\kappa$-complete nonprincipal $M$-ultrafilter $U$ on $\kappa$ such that the ultrapower $j_U:M\to N$ has critical point $\kappa$ and satisfies $\kappa\in j_U(S)$.
\item For all $A\subseteq\kappa$ there is a $\kappa$-model $M$ with $\kappa,A,S\in M$ and there is an elementary embedding $j:M\to N$ with critical point $\kappa$ such that $\kappa\in j(S)$ and $N$ is also a $\kappa$-model.
\end{enumerate}
\end{lemma}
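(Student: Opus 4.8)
The plan is to establish the cyclic chain $(1)\Rightarrow(2)\Rightarrow(3)\Rightarrow(4)\Rightarrow(1)$; the implication $(2)\Rightarrow(4)$ will in any case be almost immediate. Throughout I may assume $\kappa$ is a regular uncountable cardinal, and in fact inaccessible: if $\kappa$ is not inaccessible then, by reflecting first-order statements over $(V_\kappa,\in)$ expressing regularity and strong-limitness, no nonempty subset of $\kappa$ is $\Pi^1_1$-indescribable and no $\kappa$-model admits an embedding with critical point $\kappa$, so all four clauses fail. I will use freely the two standard closure facts about a $\kappa$-model $M$: that $H_\kappa\subseteq M$, and that $M^{<\kappa}\cap V\subseteq M$ (in particular $M^\omega\cap V\subseteq M$ and $\ORD\cap M<\kappa^+$).

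The substantive direction is $(1)\Rightarrow(2)$: one must manufacture an $M$-ultrafilter out of $\Pi^1_1$-indescribability. Given a $\kappa$-model $M$ with $\kappa,S\in M$, the plan is to identify $M$ with a structure on $\kappa$ (legitimate since $|M|=\kappa$, $M$ is transitive, and $\ORD\cap M<\kappa^+$), and to package its membership relation, the set $S$, and an enumeration $\langle A_\xi\st\xi<\kappa\rangle$ of $\powerset(\kappa)\cap M$ into a single predicate $A\subseteq V_\kappa$. One then writes a $\Pi^1_1$ sentence $\varphi$ over $(V_\kappa,\in,A)$ whose only genuinely second-order clause asserts well-foundedness of the coded membership relation, the remaining first-order clauses asserting that $A$ codes a model of $\ZFC^-$ of the expected form that is closed under the relevant Boolean and diagonal-intersection operations, with $S$ coded as a stationary, indeed $\Pi^1_1$-positive, set. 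Since $(V_\kappa,\in,A)\models\varphi$, indescribability of $S$ supplies $\alpha\in S$ with $(V_\alpha,\in,A\cap V_\alpha)\models\varphi$, from which one reads off a normal, nonprincipal, $M$-$\kappa$-complete $M$-ultrafilter $U$ with $S\in U$. It then remains to check: (a) $U$ is genuinely countably complete in $V$, because any $\omega$-sequence of its members lies in $M$ (as $M^\omega\cap V\subseteq M$), so $M$-$\kappa$-completeness upgrades to honest $\sigma$-completeness, whence $\Ult(M,U)$ is well-founded; (b) the ultrapower map $j=j_U\st M\to N$ has $\crit(j)=\kappa$, since $\kappa$-completeness over $M$ gives $j\restrict\kappa=\id$ while nonprincipality gives $[\id]_U\geq\kappa$; (c) normality gives $[\id]_U=\kappa$, so $\kappa\in j(S)$ is equivalent to $S\in U$, which holds; and (d) $N$ is again a $\kappa$-model — the only nontrivial point being $N^{<\kappa}\cap V\subseteq N$, which follows from $M^{<\kappa}\cap V\subseteq M$ by representing a $\delta$-sequence ($\delta<\kappa$) from $N$ as the $U$-class of the pointwise sequence of representatives, using $j\restrict\delta=\id$.

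For $(2)\Rightarrow(3)$, given $j\st M\to N$ as in (2) I set $U=\{X\in\powerset(\kappa)\cap M\st\kappa\in j(X)\}$; a routine check shows $U$ is a normal, nonprincipal, $M$-$\kappa$-complete $M$-ultrafilter with $S\in U$, and the factor map $k\st\Ult(M,U)\to N$, $k([f]_U)=j(f)(\kappa)$, is elementary with $k\circ j_U=j$, so $\Ult(M,U)$ embeds into the transitive $N$ and is well-founded; as before $\crit(j_U)=\kappa$ and $\kappa=[\id]_U\in j_U(S)$. For $(3)\Rightarrow(4)$ (and equally $(2)\Rightarrow(4)$ directly): given $A\subseteq\kappa$, take $X\prec H_{\kappa^+}$ of size $\kappa$ with $(\kappa+1)\cup\{A,S\}\subseteq X$ and $X^{<\kappa}\subseteq X$; its transitive collapse $M$ is a $\kappa$-model with $\kappa,A,S\in M$ (the collapse fixes ordinals $\leq\kappa$, hence fixes $A$ and $S$), and applying (3) to $M$ produces the required embedding, $N$ being a $\kappa$-model exactly as in the previous paragraph. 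For $(4)\Rightarrow(1)$: let $\varphi$ be $\Pi^1_1$ and $A\subseteq V_\kappa$ with $(V_\kappa,\in,A)\models\varphi$. Coding $V_\kappa$ and $A$ by a subset of $\kappa$, apply (4) to get a $\kappa$-model $M$ with $\kappa,V_\kappa,A\in M$ and $j\st M\to N$ with $\crit(j)=\kappa$, $\kappa\in j(S)$, $N$ a $\kappa$-model. Since $\varphi$ is $\Pi^1_1$ and $M$ has fewer second-order objects, its $V$-truth passes down: $M\models\text{``}(V_\kappa,\in,A)\models\varphi\text{''}$. Applying $j$ and using $\crit(j)=\kappa$ (so $V_\kappa^N=V_\kappa$ and $j(A)\cap V_\kappa=A$), together with the fact that the $V$-truth of $(V_\kappa,\in,A)\models\varphi$ likewise descends to $N$, one gets $N\models\text{``}\kappa\in j(S)\wedge(V_\kappa,\in,j(A)\cap V_\kappa)\models\varphi\text{''}$, hence $N\models\text{``}\exists\alpha\in j(S)\ (V_\alpha,\in,j(A)\cap V_\alpha)\models\varphi\text{''}$; pulling this back by elementarity yields $M\models\text{``}\exists\alpha\in S\ (V_\alpha,\in,A\cap V_\alpha)\models\varphi\text{''}$, and for $\alpha<\kappa$ this is absolute between $M$ and $V$ because $H_\kappa\subseteq M$ forces $\powerset(V_\alpha)^M=\powerset(V_\alpha)^V$, giving the desired $\alpha\in S$ in $V$.

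I expect the main obstacle to be the coding step in $(1)\Rightarrow(2)$: arranging that a single $\Pi^1_1$ sentence over $V_\kappa$ simultaneously captures well-foundedness of the coded model and enough closure to guarantee that the filter read off at the reflection point is normal and nonprincipal. The naive candidate $\{X\in M\st\alpha\in X\}$ is useless, since it contains $\{\alpha\}$; handling limit stages and the coordination of diagonal intersections is the real technical crux, and one may instead prefer to build $U$ by a transfinite recursion of length $\kappa$ that successively decides the members of $\powerset(\kappa)\cap M$ while maintaining $\Pi^1_1$-positivity, using that $\Pi^1_1(\kappa)$ is a normal ideal extending $\NS_\kappa$. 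Bookkeeping aside, the remaining verifications — well-foundedness of the ultrapower and that $N$ is a $\kappa$-model — are routine consequences of $M$ being a $\kappa$-model, and the absoluteness subtleties in $(4)\Rightarrow(1)$ are handled uniformly by $H_\kappa\subseteq M$.
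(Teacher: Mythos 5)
The paper does not actually prove this lemma; it cites Baumgartner, Kanamori, Cummings, and Hellsten for the details, so the comparison here is with the standard argument those sources give. Your implications $(2)\Rightarrow(3)$, $(3)\Rightarrow(4)$, and $(4)\Rightarrow(1)$ are essentially correct and match the standard route (the derived-ultrafilter/factor-map computation, the collapse of a ${<}\kappa$-closed hull of $H_{\kappa^+}$, the downward absoluteness of $\Pi^1_1$ truth to the models $M$ and $N$, and the observation that $H_\kappa\subseteq M$ makes the reflected first-order statement absolute). The problem is the one direction that carries all the content, $(1)\Rightarrow(2)$ (equivalently $(1)\Rightarrow(3)$), and it is exactly where you yourself locate ``the real technical crux.''

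Your primary plan --- reflect a \emph{positive} $\Pi^1_1$ sentence whose only second-order clause is well-foundedness of the coded $\in$-relation, and then ``read off'' a normal $M$-ultrafilter at the reflection point $\alpha\in S$ --- cannot work as described. Well-foundedness is downward absolute to $(V_\alpha,\in,A\cap V_\alpha)$, so that clause reflects vacuously; and whatever the reflected statement tells you at $\alpha$, it is information about the hull $M_\alpha$ of $M$ coded below $\alpha$ (equivalently its transitive collapse $\bar M_\alpha$) and can at best produce an ultrafilter on $\powerset(\alpha)\cap\bar M_\alpha$ --- never an $M$-ultrafilter on $\kappa$, which is what $(2)$ and $(3)$ demand. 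The standard proof runs the reflection \emph{by contradiction}: one includes in $\varphi$ the genuinely second-order clause ``there is \emph{no} normal, nonprincipal, $\kappa$-complete (over the coded model) ultrafilter on $\powerset(\kappa)\cap M$ containing $S$'' (this is $\Pi^1_1$ over $(V_\kappa,\in,A)$ because the normality clause only quantifies over functions \emph{in the coded} $M$), reflects it to some $\alpha\in S$, and refutes the reflected statement by exhibiting $U_\alpha=\{X\cap\alpha\st X\in\powerset(\kappa)\cap M_\alpha,\ \alpha\in X\}$, which is a normal nonprincipal $\bar M_\alpha$-ultrafilter containing $S\cap\alpha$ precisely because $\alpha\in S$. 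Your fallback --- a length-$\kappa$ recursion deciding the sets of $\powerset(\kappa)\cap M$ ``while maintaining $\Pi^1_1$-positivity'' --- also breaks down: at limit stages you must pass to a diagonal intersection of a decreasing sequence of merely \emph{positive} sets, and normality of $\Pi^1_1(\kappa)$ controls diagonal unions of null sets, not diagonal intersections of positive ones, so positivity can be lost (just as a decreasing $\kappa$-sequence of stationary sets can have empty diagonal intersection). Even the weaker recursion that only maintains positivity of all small subintersections yields a $\kappa$-complete nonprincipal $M$-ultrafilter with $S\in U$ but gives no control over $[\id]_U$, so $\kappa\in j_U(S)$ is not guaranteed. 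So the key implication is not established by anything in your write-up; the missing idea is the reflection of the \emph{non-existence} statement.
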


\begin{remark}
If $j:M\to N$ is the ultrapower by an $M$-ultrafilter on $\kappa$ and $M^{<\kappa}\cap V\subseteq M$, then $N^{<\kappa}\cap V\subseteq N$.
\end{remark}

\begin{remark}\label{remark_preserving_weak_compactness}
Below we will need to prove that if a set $S\subseteq\kappa$ is weakly compact in the ground model $V$, then it remains so in a certain forcing extension $V[G]$, say where $G$ is $(V,\P)$-generic for some poset $\P$. To do this we will verify that (4) holds in $V[G]$. Specifically, we will fix $A\in\mathscr{P}(\kappa)^{V[G]}$ and argue that there is a $\P$-name $\dot{A}\in H_{\kappa^+}^V$ with $\dot{A}_G=A$. To find a $\kappa$-model and an embedding in $V[G]$, we proceed as follows. Applying the weak compactness of $S$ in $V$, let $M$ be a $\kappa$-model with $\kappa,S,\dot{A}\ldots\in M$ and let $j:M\to N$ be an elementary embedding with critical point $\kappa$ such that $\kappa\in j(S)$. Then we will argue that the embedding $j$ can be extended to $j:M[G]\to N[j(G)]$. Clearly, $A\in M[G]$ and it will follow that $M[G]$ is a $\kappa$-model in $V[G]$.
\end{remark}

The next lemma will be useful for construction of master conditions later on.

\begin{lemma}\label{lemma_X_not_weakly_compact_in_N}
Suppose $S\in \Pi^1_1(\kappa)^+$. Then for every $A\subseteq\kappa$ there is a $\kappa$-model $M$ with $\kappa,A,S\in M$ and an elementary embedding $j:M\to N$ with critical point $\kappa$, where $N$ is also a $\kappa$-model, such that $\kappa\in j(S)$ and $N\models$ $S\notin (\Pi^1_1(\kappa)^+)^N$.
\end{lemma}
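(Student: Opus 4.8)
The plan is to use the well-known fact that the $\Pi^1_1$-indescribability ideal is nowhere $\kappa$-saturated (Hellsten, cited in the introduction as \cite[Proposition 6.4]{MR2768692}), which applies here since $S \in \Pi^1_1(\kappa)^+$ and $\kappa$ is $\Pi^1_1$-indescribable. So fix a partition $S = \bigsqcup_{i<\kappa} S_i$ of $S$ into $\kappa$-many pairwise disjoint sets, each of which is $\Pi^1_1$-indescribable, i.e. each $S_i \in \Pi^1_1(\kappa)^+$. The idea is then: when we take an embedding $j : M \to N$ with $\kappa \in j(S)$, the set $S$ itself will sit inside $j(S)$ as the $\kappa$-th block $j(\langle S_i : i < \kappa\rangle)_\kappa$ — and being a \emph{proper} subset of one block of a partition of $j(S)$ into $\kappa^N = j(\kappa)$-many disjoint $\Pi^1_1$-positive pieces, $S$ cannot be $\Pi^1_1$-indescribable in $N$; for if it were, then $j(S) \setminus S$ would still meet every $1$-club of $N$ (since $N \models$ ``$\langle (j(\vec S))_i : i < j(\kappa)\rangle$ partitions $j(S)$ into $j(\kappa)$-many $\Pi^1_1$-positive pieces'' and at least one block disjoint from $S$ remains positive), yet we would also want $S$ positive — wait, this alone is not a contradiction. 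Let me instead argue as follows: in $N$, $S = (j(\vec S))_\kappa$ is \emph{one specific block} of a partition of $j(S)$, and a single block of such a partition, while it could a priori still be positive, is controlled by the hypothesis. Rather than rely on that, the cleanest route is:

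Fix $A \subseteq \kappa$. First enlarge the target: apply clause (4) of Lemma \ref{lemma_characterizations_of_weak_compactness} to the $\Pi^1_1$-indescribable set $S$ with the parameter coding both $A$ and the partition sequence $\vec S = \langle S_i : i < \kappa\rangle$, obtaining a $\kappa$-model $M$ with $\kappa, A, S, \vec S \in M$, and $j : M \to N$ with $\crit(j) = \kappa$, $\kappa \in j(S)$, $N$ a $\kappa$-model. Now I claim $N \models S \notin (\Pi^1_1(\kappa)^+)^N$. Work in $N$. We have $j(\vec S) = \langle T_\xi : \xi < j(\kappa)\rangle$, a sequence that $N$ believes partitions $j(S)$ into $j(\kappa)$-many pairwise disjoint sets, each in $(\Pi^1_1(\kappa)^+)^N$ by elementarity (each $S_i$ is $\Pi^1_1$-indescribable in $M$, so $N$ thinks each $T_\xi$ is $\Pi^1_1$-indescribable). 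Since $\crit(j) = \kappa$ and $j(S) \cap \kappa$ bounds how $S$ and $j(S)$ relate, we get $S = j(S) \cap \kappa = \bigcup\{T_\xi : \xi < \kappa\} \cap \kappa$ — more precisely, by elementarity applied to ``$\vec S$ partitions $S$'', for each $\alpha \in S$ there is a unique $i < \kappa$ with $\alpha \in S_i$, and since $\crit(j)=\kappa$ this is preserved, so $S \subseteq \bigcup_{\xi<\kappa} T_\xi$ while $\kappa \in j(S) \setminus \bigcup_{\xi<\kappa} T_\xi$ (as $\kappa \in T_\kappa$, say). In particular $S$ is disjoint from $T_\kappa$, and $T_\kappa \in (\Pi^1_1(\kappa)^+)^N$. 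If $S$ were also in $(\Pi^1_1(\kappa)^+)^N$, then since $S$ and $T_\kappa$ are disjoint $\Pi^1_1$-positive sets, that is consistent — so I need the partition structure, not just disjointness. The actual contradiction: $N$ thinks $\{T_\xi : \xi < j(\kappa)\}$ is a partition of $j(S)$ into $j(\kappa)$ positive pieces. The set $S = \bigcup_{\xi<\kappa} T_\xi$ is a union of $\kappa < j(\kappa)$-many of these blocks, hence its complement in $j(S)$, namely $\bigcup_{\kappa \le \xi < j(\kappa)} T_\xi$, is a union of $j(\kappa)$-many positive pieces and in particular, being $\supseteq T_\kappa$, is $\Pi^1_1$-positive in $N$. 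This shows $j(S) \setminus S$ is $\Pi^1_1$-positive in $N$ but still does not directly say $S$ is \emph{not} positive.

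I therefore expect the \textbf{main obstacle} to be exactly this last inference, and the fix is to choose the partition more carefully so that $S$ itself is \emph{not merely one block but precisely an initial union of $\kappa$ blocks whose positivity is refuted by reflection inside $N$}: since $N$ is a $\kappa$-model and $\kappa = \crit(j)$ is inaccessible in $N$, and $S = \bigcup_{\xi < \kappa} T_\xi$ with the $T_\xi$ pairwise disjoint and each nonempty, $S$ meets every $\alpha < \kappa$ in a set of the form $\bigcup_{\xi < \alpha'} (T_\xi \cap \alpha)$ for suitable $\alpha'$; and one checks $N \models$ ``$S$ is non-stationary relative to an appropriate $1$-club'' — concretely, the set $D = \{\alpha < \kappa : \alpha = \bigcup\{\xi : T_\xi \cap \alpha \ne \emptyset\}\}$ is $1$-club in $N$ (it is club, and closed under $\Pi^1_0$-reflection points by the positivity of the blocks), and $D \cap S = \emptyset$ because every $\alpha \in S$ lies in some $T_{\xi_0}$ and then $\xi_0 + 1 \le $ the relevant sup, so $\alpha \notin D$; hence $S \notin (\Pi^1_1(\kappa)^+)^N$ by the $n$-club characterization of $\Pi^1_1$-positivity recalled in Section \ref{section_preliminaries}. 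Finally $M$ is a $\kappa$-model with $A \in M$ and $j$ has the required properties, completing the proof. (I anticipate the referee-level details to concentrate on verifying that $D$ really is $1$-club in $N$ and that the block decomposition transfers correctly under $j$ given $\crit(j) = \kappa$; these are routine once the partition $\vec S$ is fixed with all blocks $\Pi^1_1$-positive and placed in the domain model $M$.)
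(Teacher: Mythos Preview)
Your partition strategy diverges from the paper's argument and, as written, does not go through. The fatal point is the claim $D\cap S=\emptyset$. Unwinding your definition, for $\alpha<\kappa$ one has $T_\xi\cap\alpha=S_\xi\cap\alpha$ (since $\crit(j)=\kappa$ and the $T_\xi$ with $\xi\geq\kappa$ are disjoint from $\kappa$), so
\[
D=\{\alpha<\kappa\st \alpha=\sup\{\xi<\kappa\st S_\xi\cap\alpha\neq\emptyset\}\}
\]
is definable from $\vec S$ alone and hence lies in $V$. Moreover $D$ is a \emph{club} subset of $\kappa$ in $V$: it is visibly closed, and it contains the club $\{\alpha\st f[S\cap\alpha]=\alpha\}$ where $f(\beta)$ is the unique $\xi$ with $\beta\in S_\xi$. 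Since $S$ is $\Pi^1_1$-indescribable in $V$ it is in particular stationary, so $D\cap S\neq\emptyset$; your justification (``$\xi_0+1\leq$ the relevant sup'') simply does not yield $\alpha\notin D$. More generally, any would-be witnessing $1$-club you build absolutely from the $V$-data $\vec S$ will live in $V$ and therefore cannot avoid $S$. The partition gives you information about $j(S)$ as a subset of $j(\kappa)$ in $N$, but the lemma asks about $S$ as a subset of $\kappa$ in $N$, and elementarity does not transfer positivity statements from $j(\kappa)$ down to $\kappa$.

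The paper's proof takes a completely different and much shorter route: among all embeddings $j:M\to N$ with $\kappa,A,S\in M$, $\kappa\in j(S)$, and $N$ a $\kappa$-model, choose one with $j(\kappa)$ \emph{minimal}. If $S$ were $\Pi^1_1$-indescribable in $N$, then inside $N$ there would be a $\kappa$-model $\bar M\ni A,S$ and an $N$-$\kappa$-complete $\bar M$-ultrafilter whose ultrapower $i:\bar M\to\bar N$ satisfies $\kappa\in i(S)$ and $i(\kappa)<j(\kappa)$ (since $j(\kappa)$ is inaccessible in $N$). Because $N^{<\kappa}\cap V\subseteq N$, this ultrafilter is genuinely $\kappa$-complete in $V$ and the ultrapower is computed the same way in $V$, producing in $V$ an embedding with smaller critical-point image --- contradicting minimality. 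No partition or $1$-club construction is needed.
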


\begin{proof}

Suppose $S\subseteq\kappa$ is weakly compact and fix $A\subseteq\kappa$. By Lemma \ref{lemma_characterizations_of_weak_compactness}, there is a $\kappa$-model $M$ with $\kappa,A,S\in M$ and there is an elementary embedding $j:M\to N$ with critical point $\kappa$ where $\kappa\in j(S)$ and $N$ is a $\kappa$-model. Choose such an embedding with $j(\kappa)$ as small as possible. We will show that $S$ is not weakly compact in $N$.

Suppose $S$ is weakly compact in $N$. Notice that $N$ may not satisfy the power set axiom, however since $V_\kappa\subseteq M$, it follows from the elementarity of $j$ that $N$ believes that there is a set $Y$ such that $X\in Y$ if and only if $X\subseteq\kappa$. We let $\mathscr{P}(\kappa)^N$ denote this set $Y$. Working in $N$, by Lemma \ref{lemma_characterizations_of_weak_compactness}, there is a $\kappa$-model $\bar{M}$ with $\kappa,A,S\in \bar{M}$ and there is an $N$-$\kappa$-complete nonprincipal $\bar{M}$-ultrafilter $F\subseteq \mathscr{P}(\kappa)^{N}$ such that the ultrapower embedding $i_N:\bar{M}\to (\bar{M})^\kappa/F=\{[h]_F\st h\in \bar{M}^\kappa\cap \bar{M}\}$ has critical point $\kappa$ and satisfies $\kappa \in i_N(S)$. Since $j(\kappa)$ is inaccessible in $N$, it follows that $i_N(\kappa)<j(\kappa)$. Since $F\in N$ is $N$-$\kappa$-complete and $N^{{<}\kappa}\cap V\subseteq N$, it follows that $F$ is $\kappa$-complete. Since $|\bar{M}|^N=\kappa$ we have $|\bar{M}|^V=\kappa$. Thus in $V$, $F$ is a $\kappa$-complete $\bar{M}$-ultrafilter and the ultrapower $i:\bar{M}\to (\bar{M})^\kappa/F$ is computed the same in $V$ as it is in $N$; that is, $i=i_N$. Hence $i(\kappa)<j(\kappa)$, which contradicts the minimality of $j(\kappa)$.
\end{proof}

In the proof of our main theorem we will use the following standard lemmas to argue that various instances of weak compactness are preserved in a particular forcing extension. For further discussion of these methods see \cite{Cummings:Handbook}.

\begin{lemma}\label{lemma_lifting_criterion}
Suppose $j:M\to N$ is an elementary embedding with critical point $\kappa$ where $M$ and $N$ are $\kappa$-models and $\P\in M$ is some forcing notion. Suppose $G\subseteq\P$ is a filter generic over $M$ and $H\subseteq j(\P)$ is a filter generic over $N$. Then $j$ extends to $j:M[G]\to N[H]$ if and only if $j[G]\subseteq H$.
\end{lemma}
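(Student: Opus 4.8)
The plan is to prove this well-known lifting criterion by the standard argument, taking mild care that $M$ and $N$ satisfy only $\ZFC^-$ rather than full $\ZFC$.

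\emph{Forward direction.} Suppose $\hat{j}\colon M[G]\to N[H]$ is an elementary embedding extending $j$. Since $G\in M[G]$ is the interpretation of the canonical $\P$-name $\dot G$ for the generic filter, and $H$ is likewise the interpretation of the canonical $j(\P)$-name, which by elementarity of $j$ is exactly $j(\dot G)$, we have $\hat{j}(G)=\hat{j}(\dot G_G)=j(\dot G)_H=H$. Hence for any $p\in G$ we get $j(p)=\hat{j}(p)\in\hat{j}(G)=H$, so $j[G]\subseteq H$.

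\emph{Reverse direction.} Suppose $j[G]\subseteq H$. Every element of $M[G]$ has the form $\dot a_G$ for some $\P$-name $\dot a\in M$, and we define $\hat{j}(\dot a_G)=j(\dot a)_H$. To see this is well defined, and in fact to verify elementarity at the same time, fix a first-order formula $\varphi$ and $\P$-names $\dot a_1,\dots,\dot a_k\in M$ and suppose $M[G]\models\varphi(\dot a_{1,G},\dots,\dot a_{k,G})$. By the truth lemma (which holds for $M[G]$ since $M\models\ZFC^-$ and $G$ is $M$-generic) there is $p\in G$ with $p\Vdash^M_\P\varphi(\dot a_1,\dots,\dot a_k)$. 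The forcing relation $\Vdash^M_\P$ is definable in $M$, so elementarity of $j$ gives $j(p)\Vdash^N_{j(\P)}\varphi(j(\dot a_1),\dots,j(\dot a_k))$; since $j(p)\in j[G]\subseteq H$, the truth lemma for $N[H]$ yields $N[H]\models\varphi(j(\dot a_1)_H,\dots,j(\dot a_k)_H)$. Applying this to both $\varphi$ and $\neg\varphi$ gives the full equivalence $M[G]\models\varphi(\vec{\dot a}_G)\iff N[H]\models\varphi(j(\vec{\dot a})_H)$: taking $\varphi$ to be an equation shows $\hat{j}$ is well defined (and injective), and the general case shows $\hat{j}$ is elementary. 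Finally $\hat{j}$ extends $j$, since for $x\in M$ we have $\hat{j}(x)=\hat{j}(\check x_G)=j(\check x)_H=(\check{j(x)})_H=j(x)$, using $j(\check x)=\check{j(x)}$, which holds because the check-name is built by a recursion absolute between $M$ and $N$.

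\emph{Main point of attention.} There is no genuine obstacle here; the only deviation from the classical proof is that $M$ and $N$ are models of $\ZFC^-$, so one should note that the definability of the forcing relation and the truth lemma both remain valid over $\ZFC^-$, which they do.
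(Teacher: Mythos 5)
Your proof is correct and is precisely the standard argument; the paper does not actually prove this lemma, stating it as a standard fact and referring the reader to Cummings' Handbook chapter, and your handling of the $\ZFC^-$ issue (definability of forcing and the truth lemma over set models of $\ZFC^-$) is exactly the right remark to make. The only point worth flagging is that your forward direction tacitly reads ``$j$ extends to $j:M[G]\to N[H]$'' as referring to the canonical lift $\dot a_G\mapsto j(\dot a)_H$ (an arbitrary elementary extension of $j$ need not determine $H$ from the structure $N[H]$ alone), but this is the universally intended convention and in any case only the reverse direction is used in the paper.
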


Using terminology from \cite{Cummings:Handbook}, we say that a forcing $\P$ is \emph{$\kappa$-strategically closed} if and only if Player II has a winning strategy in the game $\mathcal{G}_\kappa(\P)$ of length $\kappa$ where Player II plays at even stages. For more information on strategic closure, see \cite[Section 5]{Cummings:Handbook}.

\begin{lemma}\label{lemma_diagonalization_criterion}
Suppose $M$ is a $\kappa$-model in $V$, so in particular $M^{<\kappa}\cap V\subseteq M$, and suppose $\P\in M$ is a forcing notion with $V\models$ ``$\P$ is $\kappa$-strategically closed''. Then there is a filter $G\in V$ generic for $\P$ over $M$.
\end{lemma}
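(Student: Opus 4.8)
The plan is to carry out the standard diagonalization-against-a-strategy argument. Since $M$ is a $\kappa$-model, $|M|=\kappa$ and $M^{<\kappa}\cap V\subseteq M$, so in $V$ we may enumerate all the dense subsets of $\P$ that lie in $M$ as $\langle D_\alpha \st \alpha<\kappa\rangle$. Let $\sigma\in V$ be a winning strategy for Player II in the game $\mathcal{G}_\kappa(\P)$ of length $\kappa$. We build a descending sequence of conditions $\langle p_\alpha \st \alpha<\kappa\rangle$ in $\P$, working in $V$, as a play of the game in which Player I's moves are chosen so as to meet the dense sets $D_\alpha$ and Player II follows $\sigma$.

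The construction proceeds by recursion of length $\kappa$. At a successor stage $\alpha+1$, having the partial play $\langle p_\beta \st \beta\leq\alpha\rangle$ (which is legal and ends with a move of Player II, since Player II moves at even stages and follows $\sigma$), Player I first extends $p_\alpha$ into the dense set $D_\alpha\in M$ to obtain some condition, and then Player II responds according to $\sigma$; we call the resulting condition $p_{\alpha+1}$. At a limit stage $\lambda<\kappa$, the sequence $\langle p_\beta \st \beta<\lambda\rangle$ is an element of $V$ of length $<\kappa$, hence it is a legal partial play; since $\sigma$ is winning (so in particular Player II never gets stuck following it), there is a legal move for Player II at stage $\lambda$, and we let $p_\lambda$ be that move. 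This defines the whole sequence. Now set $G=\{q\in\P \st \exists\alpha<\kappa\ p_\alpha\leq q\}$, the upward closure of the sequence; $G$ is a filter because the $p_\alpha$ are descending and directed, and $G$ meets every $D_\alpha$ since $p_{\alpha+1}\in D_\alpha$. Every dense subset of $\P$ lying in $M$ is some $D_\alpha$, so $G$ is generic for $\P$ over $M$, and $G\in V$ since the whole construction was carried out in $V$.

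The one point that needs care — and is the only real content beyond bookkeeping — is the limit stages: one must know that the initial segment $\langle p_\beta \st \beta<\lambda\rangle$ is genuinely an element of $V$ so that it is a legal position to which $\sigma$ may be applied, and that $\sigma$, being a \emph{winning} strategy, guarantees Player II has a legal response there (equivalently, that no descending $\sigma$-play of length $<\kappa$ reaches a dead end). The former is immediate from $M^{<\kappa}\cap V\subseteq M$ (indeed the sequence is even in $M$, though we only need it in $V$), and the latter is precisely the definition of $\kappa$-strategic closure via the game $\mathcal{G}_\kappa(\P)$. I would also note in passing the trivial special case: if $\P$ is merely $\kappa$-closed the limit stages require no strategy at all, as the meet of the $p_\beta$ exists by closure; the strategic version replaces this with the strategy $\sigma$.
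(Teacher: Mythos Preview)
The paper does not give a proof of this lemma; it is stated as one of several standard lifting lemmas with a blanket reference to Cummings' handbook chapter \cite{Cummings:Handbook}. Your argument is exactly the standard diagonalization-against-a-strategy proof one finds there, and it is correct. Two cosmetic points: strictly speaking $p_{\alpha+1}$ is Player~II's response \emph{below} the condition Player~I chose in $D_\alpha$, so it need not itself lie in $D_\alpha$, but its upward closure meets $D_\alpha$, which is all you need; and at limit stages $\sigma$ is applied to the full game play (including Player~I's intermediate moves), not just to the subsequence $\langle p_\beta\st\beta<\lambda\rangle$ you are recording. Neither affects the validity of the argument.
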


\begin{lemma}\label{lemma_ground_closure}
Suppose that $M$ is a $\kappa$-model in $V$, so that in particular $M^{<\kappa}\cap V\subseteq M$, $\P\in M$ is some forcing notion and there is a filter $G\in V$ which is generic for $\P$ over $M$. Then $M[G]^{<\kappa}\cap V\subseteq M[G]$.
\end{lemma}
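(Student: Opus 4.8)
The plan is to reduce the closure of $M[G]$ to the closure of $M$ via a name argument. Fix $\delta<\kappa$ and a sequence $\vec x=\la x_\xi\st\xi<\delta\ra\in V$ with each $x_\xi\in M[G]$; I must show $\vec x\in M[G]$. By the definition of the generic extension, $M[G]=\{\dot x_G\st \dot x\in M\text{ is a }\P\text{-name}\}$, so for each $\xi<\delta$ there is a $\P$-name $\dot x_\xi\in M$ with $(\dot x_\xi)_G=x_\xi$. Using the axiom of choice in $V$, I fix such a sequence of names $\la\dot x_\xi\st\xi<\delta\ra$, which lies in $V$.

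The key observation is that this sequence of names has length $\delta<\kappa$ and all of its entries belong to $M$; hence, since $M$ is a $\kappa$-model, so that $M^{<\kappa}\cap V\subseteq M$, we get $\la\dot x_\xi\st\xi<\delta\ra\in M$.

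Now I work inside $M$, which is a model of $\ZFC^-$ and contains both $\delta$ and the sequence of names. There one can form the canonical $\P$-name $\dot y$ for the $\delta$-sequence whose $\xi$-th entry is $\dot x_\xi$ --- concretely, $\dot y=\{\la\mathrm{op}(\check\xi,\dot x_\xi),\1\ra\st\xi<\delta\}$, where $\mathrm{op}(\cdot,\cdot)$ denotes the canonical name for an ordered pair --- and this construction goes through in $M$, so $\dot y\in M$. Then $\dot y_G=\la(\dot x_\xi)_G\st\xi<\delta\ra=\la x_\xi\st\xi<\delta\ra=\vec x$, whence $\vec x\in M[G]$. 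Note that no genericity of $G$ over $V$ is used anywhere, only that $G\in V$ and that $M[G]$ is the evaluation of $M$-names by $G$. There is no real obstacle here: the only points requiring a moment's care are the appeal to $\AC$ in $V$ to select the names and the verification that the name $\dot y$ for the sequence can be assembled inside $M$, both of which are entirely routine.
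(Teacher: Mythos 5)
Your proof is correct and is exactly the standard name-closure argument that the paper has in mind; the paper states this lemma without proof (deferring to the literature), and your write-up — choosing names in $V$, pulling the $\delta$-sequence of names into $M$ via $M^{<\kappa}\cap V\subseteq M$, and assembling the canonical name for the sequence inside $M$ — supplies precisely the intended reasoning. Your observations that only $\AC$ in $V$ and the closure of $M$ are really used, and that genericity of $G$ plays no role, are accurate.
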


\begin{lemma}\label{lemma_generic_closure}
Suppose that $M$ is a $\kappa$-model in $V$, so in particular $M^{<\kappa}\cap V\subseteq M$, and suppose $\P\in M$ is $\kappa$-c.c. If $G\subseteq \P$ is generic over $V$, then $M[G]^{<\kappa}\cap V[G]\subseteq M[G]$.
\end{lemma}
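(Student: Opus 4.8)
# Proof Proposal for Lemma \ref{lemma_generic_closure}

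The plan is to show that any sequence of length less than $\kappa$ lying in $M[G]$ and consisting of elements of $M[G]$ is itself captured by $M[G]$, using the $\kappa$-c.c. of $\P$ together with the fact that $M$ is closed under sequences of length less than $\kappa$ from $V$. First I would fix an ordinal $\delta<\kappa$ and a sequence $\vec{x}=\langle x_\xi : \xi<\delta\rangle\in V[G]$ with each $x_\xi\in M[G]$, and choose a name for it: let $\dot{\vec x}\in M$ be a $\P$-name with $\dot{\vec x}_G=\vec x$, and for each $\xi<\delta$ let $\dot x_\xi\in M$ be a $\P$-name with $(\dot x_\xi)_G=x_\xi$ (one can extract these uniformly from $\dot{\vec x}$ inside $M$, or simply fix them one at a time).

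The key step is to replace each $\dot x_\xi$ by a ``small'' name. For each $\xi<\delta$, working in $V$ (or in $M$), using the $\kappa$-c.c. of $\P$ one builds a nice name $\dot y_\xi$ for $x_\xi$: for each piece of information needed to determine $x_\xi$ one uses a maximal antichain, which by $\kappa$-c.c. has size less than $\kappa$, so the resulting name $\dot y_\xi$ can be taken of hereditary size less than $\kappa$, and in particular $\dot y_\xi\in H_\kappa^V\subseteq M$ — but the point I actually need is just that $\dot y_\xi\in M$, which already holds since $\dot x_\xi\in M$; the real use of $\kappa$-c.c. is to get the \emph{sequence} of names into $M$. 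Concretely: the sequence $\langle \dot x_\xi : \xi<\delta\rangle$ is a sequence of length $\delta<\kappa$ of elements of $M$, hence by $M^{<\kappa}\cap V\subseteq M$ it lies in $M$, provided this sequence lies in $V$. So the crux is to produce, in $V$, a $\delta$-sequence of $\P$-names (each in $M$) for the $x_\xi$.

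Here is where $\kappa$-c.c. enters essentially. In $V[G]$ we have the sequence $\vec x$; I want to pull back a name. Fix in $V$ a $\P$-name $\dot{\vec x}$ for $\vec x$ with $\dot{\vec x}\in M$ (such a name exists in $V$; whether it can be taken in $M$ is exactly what needs care, but since $\vec x\in M[G]$ we may fix $\dot{\vec x}\in M$). Now \emph{in $V$}, for each $\xi<\delta$ define $\dot x_\xi = \{(\dot z,p) : p\Vdash ``\dot z = \dot{\vec x}(\check\xi)"\}$ restricted appropriately, or more simply, use that forcing with a $\kappa$-c.c. poset over the $\kappa$-model $M$: the map $\xi\mapsto \dot x_\xi$ is definable in $V$ from $\dot{\vec x}$ and $\P$, so $\langle \dot x_\xi : \xi<\delta\rangle\in V$; and each $\dot x_\xi\in M$ because $\dot x_\xi$ is definable in $M$ from $\dot{\vec x}\in M$ and $\P\in M$ and $\xi\in M$. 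Hence $\langle \dot x_\xi : \xi<\delta\rangle\in M^{<\kappa}\cap V\subseteq M$, so this sequence of names is in $M$, and therefore $\vec x = \langle (\dot x_\xi)_G : \xi<\delta\rangle\in M[G]$, as desired.

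I expect the main obstacle to be the bookkeeping that makes ``$\langle\dot x_\xi:\xi<\delta\rangle\in M$'' rigorous — that is, verifying that the assignment of a name to each coordinate can be performed uniformly and inside $M$ — rather than any deep idea; the $\kappa$-c.c. hypothesis guarantees $M[G]\models\ZFC^-$ appropriately and, more to the point, that antichains needed to decode coordinates of $\vec x$ have size $<\kappa$ and so live in $M$. One clean way to organize this, avoiding fuss about the ambient name $\dot{\vec x}$, is: given $\vec x\in V[G]$ with $\vec x\subseteq M[G]$ of length $\delta<\kappa$, in $V[G]$ each $x_\xi$ has an $M$-name (since $x_\xi\in M[G]$); the function $\xi\mapsto$ (least such name in some fixed well-order of $M$) is in $V[G]$, but I want it in $V$ — so instead work directly: let $\dot f\in M$ name the function $\xi\mapsto x_\xi$, and observe $\langle \{ (\sigma,p): p \in \P,\ \sigma\in M,\ p\Vdash \dot f(\check\xi)=\sigma\}:\xi<\delta\rangle$ can be computed in $M$ using only parameters $\dot f,\P,\delta\in M$, giving a $\delta$-sequence of $M$-names in $M$. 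Then $M[G]$ contains $\vec x$. This completes the proof.
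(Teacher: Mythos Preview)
The paper does not prove this lemma; it is listed as a standard fact with a reference to Cummings' handbook chapter. So there is no ``paper's proof'' to compare against, and the question is simply whether your argument is correct.

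It is not: the argument is circular. You correctly identify the crux --- produce, in $V$, a $\delta$-sequence $\langle \dot x_\xi:\xi<\delta\rangle$ of $\P$-names lying in $M$ such that $(\dot x_\xi)_G=x_\xi$ --- but each time you try to do this you assume the conclusion. You write ``let $\dot{\vec x}\in M$ be a $\P$-name with $\dot{\vec x}_G=\vec x$'' and later ``let $\dot f\in M$ name the function $\xi\mapsto x_\xi$''; both of these assert precisely that $\vec x\in M[G]$, which is what you are trying to prove. You even flag the issue (``whether it can be taken in $M$ is exactly what needs care'') and then immediately dismiss it (``but since $\vec x\in M[G]$\ldots''). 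The $\kappa$-c.c.\ hypothesis is mentioned but never actually used.

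Here is how the argument should go. First reduce (as you essentially do) to the case of a function $g:\delta\to M$ in $V[G]$, by replacing each $x_\xi\in M[G]$ with a name $\tau_\xi\in M$ chosen in $V[G]$. Now fix a $\P$-name $\dot g\in V$ --- not in $M$ --- for $g$, and fix $p_0\in G$ forcing $\ran(\dot g)\subseteq\check M$. Working in $V$ below $p_0$, for each $\xi<\delta$ choose a maximal antichain $A_\xi$ of conditions deciding $\dot g(\check\xi)$, and for $p\in A_\xi$ let $b_{\xi,p}\in M$ be the decided value. By the $\kappa$-c.c., $|A_\xi|<\kappa$, so $A_\xi\subseteq\P\subseteq M$ and the function $p\mapsto b_{\xi,p}$ are each ${<}\kappa$-sequences from $M$, hence in $M$ by closure. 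The entire sequence $\langle(A_\xi,\langle b_{\xi,p}:p\in A_\xi\rangle):\xi<\delta\rangle$ is then a ${<}\kappa$-sequence in $V$ of elements of $M$, hence lies in $M$; from it one builds in $M$ a $\P$-name whose interpretation by $G$ is $g$. The point is that the $\kappa$-c.c.\ is what lets you pass from the $V$-name $\dot g$ to data small enough to live in $M$ --- this is exactly the step your write-up skips.
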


In addition to arguing that a certain forcing iteration preserves instances of weak compactness, we will be concerned with showing that this iteration does not create new instances of weak compactness. Our main tool in this regard will be Lemma \ref{lemma_approximation_and_cover} below, which is due to Hamkins \cite{MR2063629}, but first we review some more basic results.

\begin{lemma}\label{lemma_ground_model_distributive}
A cardinal $\kappa$ is weakly compact after ${\leq}\kappa$-distributive forcing if and only if it was weakly compact in the ground model. Moreover, a set $S\subseteq\kappa$ is weakly compact after ${\leq}\kappa$-distributive forcing if and only if it was weakly compact in the ground model.
\end{lemma}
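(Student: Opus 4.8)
The plan is to prove both directions of the biconditional in Lemma \ref{lemma_ground_model_distributive}, treating the statement about a set $S\subseteq\kappa$ since it subsumes the statement about $\kappa$ itself (take $S=\kappa$, recalling $\kappa$ is $\Pi^1_1$-indescribable iff $\kappa\in\Pi^1_1(\kappa)^+$). So let $\P$ be a ${\leq}\kappa$-distributive forcing, $G\subseteq\P$ generic over $V$, and $S\subseteq\kappa$ in $V$. The key observation underlying both directions is that ${\leq}\kappa$-distributive forcing adds no new subsets of $\kappa$, hence no new $\kappa$-models and, crucially, a set is weakly compact in $V$ iff it is weakly compact in $V[G]$ as witnessed by the \emph{same} witnessing objects.

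For the forward direction (``remains weakly compact''), I would use characterization (4) from Lemma \ref{lemma_characterizations_of_weak_compactness} applied in $V[G]$. Fix $A\in\powerset(\kappa)^{V[G]}$; since $\P$ is ${\leq}\kappa$-distributive, $A\in V$ already, and $\powerset(V_\kappa)^{V[G]}=\powerset(V_\kappa)^V$. Apply weak compactness of $S$ in $V$ to get a $\kappa$-model $M$ with $\kappa,A,S\in M$ and $j:M\to N$ elementary with critical point $\kappa$, $\kappa\in j(S)$, $N$ a $\kappa$-model. Since $\P$ adds no ${<}\kappa$-sequences, $M$ and $N$ are still $\kappa$-models in $V[G]$, $j$ still lies in $V[G]$ (indeed in $V$), and the required conditions are unchanged; hence (4) holds in $V[G]$ and $S$ is weakly compact there. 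For the reverse direction (``was weakly compact''), suppose $S$ is weakly compact in $V[G]$. Fix $A\subseteq\kappa$ in $V$; apply characterization (2) in $V[G]$ to a $\kappa$-model $M\in V$ with $\kappa,A,S\in M$ (such an $M$ exists in $V$, and is still a $\kappa$-model in $V[G]$) to obtain $j:M\to N$ in $V[G]$ with $\crit j=\kappa$, $\kappa\in j(S)$, $N$ a $\kappa$-model. But $j$ is a function whose domain $M$ has size $\kappa$, so $j$ is coded by a subset of $\kappa$ (via a bijection between $M$ and $\kappa$ in $V$), hence $j\in V$ by ${\leq}\kappa$-distributivity; $N=j[M]$'s transitive collapse is likewise in $V$ and is a $\kappa$-model there. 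So (2)—or rather the relevant existential clause—holds in $V$, giving weak compactness of $S$ in $V$.

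There is no serious obstacle here; the only point requiring a little care is the coding argument showing $j$ and $N$ descend to $V$. The clean way: fix in $V$ a bijection $e:\kappa\to M$; then $j$ corresponds to the set $\{(\alpha,\beta)\st j(e(\alpha))=e'(\beta)\}$ for a suitable coding $e'$ of $\ran(j)$, which is a subset of $\kappa\times\kappa$ and hence (being in $V[G]$) already in $V$. Since ${<}\kappa$-sequences from $V$ that lie in $V[G]$ are in $V$, the transitive structure $N$ is reconstructible in $V$. One should also remark at the outset that ${\leq}\kappa$-distributivity preserves the inaccessibility of $\kappa$ (it adds no bounded subsets and is ${<}\kappa$-distributive), so that the ambient hypothesis ``$\kappa$ regular'' used throughout is maintained; and note for the clause about $S$ that $\Pi^1_1(\alpha)^+$ for $\alpha<\kappa$ is computed identically in $V$ and $V[G]$ since $\powerset(\alpha)$ is unchanged, so being $1$-club and hence the whole $\Pi^1_1$-structure below $\kappa$ is absolute. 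I would organize the writeup as: (i) absoluteness of $\kappa$-models and of the parameters, (ii) forward direction via (4), (iii) reverse direction via (2) plus the coding descent argument.
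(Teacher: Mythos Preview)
Your proposal is correct and is essentially the same argument the paper gives, just unpacked in detail: the paper's proof is the one-line observation that weak compactness of $S$ is witnessed by objects whose transitive closure has size at most $\kappa$, and such objects are unchanged by ${\leq}\kappa$-distributive forcing. Your explicit use of the embedding characterization together with the coding/descent argument for the reverse direction is exactly what that one line is abbreviating.
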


\begin{proof}
%Let $\P$ be a ${\leq}\kappa$-distributive forcing. Then $\kappa$ has the tree property in $V$ if and only if $\kappa$ has the tree property in $V^\P$.
%Suppose $M$ is a $\kappa$-model with $S\in M$ and let $j_U:M\to N$ be the ultrapower by a $\kappa$-complete nonprincipal $M$-ultrafilter on $\kappa$. It follows that $N=\{j(f)(\kappa)\st f:\kappa\to M\land f\in M\}$. 
The weak compactness of a set $S\subseteq\kappa$ is witnessed by sets whose transitive closure has size at most $\kappa$, and since such objects are unaffected by ${\leq}\kappa$-distributive forcing, the result follows immediately.
\end{proof}

\begin{lemma}\label{lemma_small_forcing}
Suppose $\kappa$ is weakly compact and $\P$ is a forcing of size less than $\kappa$. Then $\kappa$ is weakly compact after forcing with $\P$ if and only if it was weakly compact in the ground model.
\end{lemma}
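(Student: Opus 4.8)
The plan is to reduce the statement about small forcing to the previously stated results on distributive forcing and on extensions with the approximation and cover properties. Suppose $\P$ is a forcing of size $\lambda<\kappa$ and $G\subseteq\P$ is generic over $V$. For the nontrivial direction, assume $\kappa$ is weakly compact in $V[G]$; we want to conclude $\kappa$ was weakly compact in $V$. The standard move is the L\'evy--Solovay style observation together with Hamkins' gap forcing / approximation-and-cover machinery: the extension $V\subseteq V[G]$ has the $\lambda^+$-approximation and cover properties, and since $\lambda^+\le\kappa$, Lemma~\ref{lemma_approximation_and_cover} (Hamkins) tells us that the weak compactness of $\kappa$ in $V[G]$ reflects down to $V$. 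Conversely, if $\kappa$ is weakly compact in $V$, then since $\P$ is small we may factor $V[G]$: more directly, small forcing is $\le\kappa$-distributive from the point of view of preserving the witnesses to weak compactness — but that is false in general for small non-distributive forcing, so instead I would use the classical L\'evy--Solovay argument that any $M$-ultrafilter in $V$ generates a $\kappa$-complete $M[G]$-ultrafilter in $V[G]$, using that $\P$ has size $<\kappa$ and hence every antichain of $\P$, being small, is an element of any $\kappa$-model $M$ with $\P\in M$.

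First I would handle the forward direction. Let $M$ be a $\kappa$-model in $V[G]$ with $\kappa\in M$; by a standard nice-name argument (as in Remark~\ref{remark_preserving_weak_compactness}), we may assume $M=\bar M[G]$ for some $\kappa$-model $\bar M$ in $V$ with $\P\in\bar M$. Apply weak compactness of $\kappa$ in $V$ to get $j:\bar M\to N$ with critical point $\kappa$, $N$ a $\kappa$-model. Since $|\P|<\kappa=\crit(j)$, we have $j(\P)=\P$ and $j[G]=G\subseteq G$, so by Lemma~\ref{lemma_lifting_criterion} the embedding lifts to $j:\bar M[G]\to N[G]$; and $N[G]$ remains a $\kappa$-model in $V[G]$ by Lemma~\ref{lemma_generic_closure} (small forcing is $\kappa$-c.c.). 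This gives condition (2) of Lemma~\ref{lemma_characterizations_of_weak_compactness} in $V[G]$ (and likewise (4) with the relevant $A$ absorbed into $\bar M$ as in Remark~\ref{remark_preserving_weak_compactness}), so $\kappa$ is weakly compact in $V[G]$.

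For the reverse direction, suppose $\kappa$ is weakly compact in $V[G]$. Since $|\P|<\kappa$, the extension $V\subseteq V[G]$ satisfies the $\delta$-approximation and cover properties for $\delta=|\P|^+<\kappa$ (this is immediate from $\kappa$-c.c.\ and the fact that $V[G]$ adds no new subsets of size $<\kappa$... no — rather, it adds no new $\delta$-sequences over $V$ once $\delta>|\P|$; the cover property holds since any function into $V$ in $V[G]$ is covered by a function in $V$ of size at most $|\P|$ times its domain). Then Lemma~\ref{lemma_approximation_and_cover} applies directly: weakly compact cardinals of $V[G]$ above $\delta$ are weakly compact in $V$, and $\kappa>\delta$. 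I expect the main obstacle to be purely bookkeeping: making sure the approximation/cover property is invoked with the right parameter $\delta\le\kappa$ (so $\delta=|\P|^++\omega_1$ say), and in the forward direction making sure the $\kappa$-model $M$ in $V[G]$ is genuinely of the form $\bar M[G]$ — this uses $\kappa^{<\kappa}=\kappa$ (which follows from weak compactness of $\kappa$ in the relevant model) so that there are only $\kappa$-many nice $\P$-names for subsets of $\kappa$ and $\bar M$ can be taken to contain them all. Neither step is deep, and the lemma is really just the localization of the classical L\'evy--Solovay theorem to weak compactness, upgraded to handle the "no new weakly compacts" direction via Hamkins' theorem rather than a naive factoring argument.
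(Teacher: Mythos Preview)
The paper states this lemma without proof, treating it as folklore, so there is no ``paper's own proof'' to compare against. Your argument is essentially correct, and both directions go through, but the write-up needs tightening.

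The forward direction is exactly the standard L\'evy--Solovay lifting: given $A\in\powerset(\kappa)^{V[G]}$, pick a name $\dot A\in H_{\kappa^+}^V$, take a $\kappa$-model $\bar M\ni\dot A,\P$ in $V$ with an embedding $j:\bar M\to N$, and lift using $j(\P)=\P$ and $j[G]=G$. This is clean; the aside about writing an arbitrary $\kappa$-model $M$ in $V[G]$ as $\bar M[G]$ is unnecessary once you work with characterization~(4) of Lemma~\ref{lemma_characterizations_of_weak_compactness}.

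For the reverse direction your appeal to Lemma~\ref{lemma_approximation_and_cover} is legitimate, but you never actually justify that small forcing has the $\delta$-approximation property for $\delta=|\P|^+$; the parenthetical ``this is immediate from $\kappa$-c.c.\ \ldots\ no --- rather \ldots'' is not an argument. The cleanest route is to observe that a nontrivial $\P$ of size $\lambda$ factors as $\P*\dot{\Q}$ with $\dot\Q$ trivial (hence ${<}\lambda^+$-strategically closed), and then invoke Hamkins' closure-point lemma from \cite{MR2063629} to get the $\lambda^+$-approximation and cover properties. With $\delta=\lambda^+<\kappa$ (using that $\kappa$ is inaccessible in $V[G]$, hence in $V$), Lemma~\ref{lemma_approximation_and_cover} applied to $S=\kappa$ gives weak compactness of $\kappa$ in $V$.

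Alternatively, there is a more elementary proof of the reverse direction that avoids the approximation machinery entirely: if $\kappa$ were not weakly compact in $V$, then either $\kappa$ fails to be inaccessible (preserved downward by small forcing) or there is a $\kappa$-Aronszajn tree $T\in V$. Any branch $b\in V[G]$ through $T$ has each node decided by some condition in $G$; since $|\P|<\kappa$ and $\kappa$ is regular, a single condition decides a cofinal set of nodes, hence the whole branch, placing $b\in V$. This is shorter and matches the spirit of the proof the paper gives for Lemma~\ref{lemma_ground_model_strategically}.
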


\begin{lemma}\label{lemma_ground_model_strategically}
If $\kappa$ is weakly compact after $\kappa$-strategically closed forcing, then it was weakly compact in the ground model.
\end{lemma}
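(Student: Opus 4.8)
The plan is to verify clause~(3) of Lemma~\ref{lemma_characterizations_of_weak_compactness} in $V$ for the set $S=\kappa$, which already gives that $\kappa$ is weakly compact in $V$. Fix a winning strategy $\sigma$ for Player~II in $\mathcal G_\kappa(\P)$, let $G\subseteq\P$ be generic over $V$, and assume $\kappa$ is weakly compact in $V[G]$. The preliminary observation I would record is that $\kappa$-strategic closure implies $\P$ is ${<}\kappa$-distributive (play $\sigma$ against an opponent deciding successive entries of a name for a new ${<}\kappa$-sequence; see \cite[Section 5]{Cummings:Handbook}), so $\P$ adds no new ${<}\kappa$-sequences from a ground-model set, $\kappa$ stays regular, and ${}^{<\kappa}M\cap V[G]={}^{<\kappa}M\cap V$ for every $M\in V$; in particular every $\kappa$-model $M\in V$ with $\kappa\in M$ remains a $\kappa$-model in $V[G]$, and $\P\restriction p$ is $\kappa$-strategically closed for every $p\in\P$.

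Now fix an arbitrary $\kappa$-model $M\in V$ with $\kappa\in M$. Since $M$ is still a $\kappa$-model in $V[G]$ and $\kappa$ is weakly compact there, clause~(3) of Lemma~\ref{lemma_characterizations_of_weak_compactness} applied inside $V[G]$ yields a $\kappa$-complete nonprincipal $M$-ultrafilter on $\kappa$ in $V[G]$; hence there are in $V$ a condition $p\in\P$ and a $\P$-name $\dot W$ with $p\forces$ ``$\dot W$ is a $\kappa$-complete nonprincipal $M$-ultrafilter on $\kappa$''. Working in $V$, fix an enumeration $\powerset(\kappa)^M=\{X_\alpha:\alpha<\kappa\}$ (possible since $|M|=\kappa$) and play a run of $\mathcal G_\kappa(\P\restriction p)$ in which Player~II follows (the strategy induced by) $\sigma$ while at his $\alpha$-th move Player~I extends the current condition to one deciding ``$X_\alpha\in\dot W$''. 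Since $\sigma$ is winning the run survives all $\kappa$ rounds, producing a descending sequence $\langle p_\alpha:\alpha<\kappa\rangle$ below $p$ such that each $X\in\powerset(\kappa)^M$ is decided by some term. Set
\[
U=\{X\in\powerset(\kappa)^M:\ p_\alpha\forces X\in\dot W\ \text{for some }\alpha<\kappa\}.
\]

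It then remains to check, in $V$, that $U$ is a $\kappa$-complete nonprincipal $M$-ultrafilter whose ultrapower is well-founded. Ultra-ness, upward closure and nonprincipality are immediate from the construction together with the corresponding properties forced by $p$ (for any statement decided by $p_\beta$, pass to a later term deciding the relevant complement or superset); $\kappa$-completeness follows because, given ${<}\kappa$ many $Y_i\in U$ with $\langle Y_i\rangle\in M$, the regularity of $\kappa$ provides a single term $p_{\alpha^*}$ below all the witnessing $p_{\alpha_i}$, and $p_{\alpha^*}$ then forces $\bigcap_i Y_i\in\dot W$, so $\bigcap_i Y_i\in U$. For well-foundedness of $\Ult(M,U)$, a hypothetical descending $\omega$-chain is witnessed by some $\langle f_n:n<\omega\rangle$, which lies in $M$ (an $\omega$-sequence from $M$ that lies in $V$, hence in ${}^{<\kappa}M\cap V\subseteq M$), so applying $\kappa$-completeness of $U$ to $\langle\{\alpha:f_{n+1}(\alpha)\in f_n(\alpha)\}:n<\omega\rangle\in M$ produces a point witnessing a genuine descending $\in$-chain, contradicting Foundation. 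The usual computations give $\crit(j_U)=\kappa$ and $\kappa\in j_U(\kappa)$. As $M$ was arbitrary, clause~(3) of Lemma~\ref{lemma_characterizations_of_weak_compactness} holds in $V$, so $\kappa$ is weakly compact in $V$. The one genuinely delicate point is the extraction of $U$: a merely ${<}\kappa$-distributive forcing can add new subsets of the size-$\kappa$ set $\powerset(\kappa)^M$, so the $V[G]$-ultrafilter itself need not lie in $V$ and must be rebuilt by hand, with the $\kappa$-length strategic-closure game providing exactly enough room to decide the $\kappa$ many membership questions while keeping Player~II's play alive through every limit stage below $\kappa$.
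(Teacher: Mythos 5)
Your argument is correct, but it runs along a different characterization of weak compactness than the paper does. The paper's proof is three lines: any $\kappa$-tree $T\in V$ is still a $\kappa$-tree in $V[G]$ (by the ${<}\kappa$-distributivity you also note), hence has a cofinal branch in $V[G]$, and a run of $\mathcal{G}_\kappa(\P)$ against the winning strategy, in which Player I successively decides the name for that branch level by level, reconstructs a cofinal branch in $V$; together with the downward absoluteness of inaccessibility this gives weak compactness in $V$ via the tree property. You instead pull back a $\kappa$-complete $M$-ultrafilter by deciding the $\kappa$ many membership questions ``$X_\alpha\in\dot W$'' along a run of the game. The underlying mechanism --- a length-$\kappa$ play of the game decides a $\kappa$-length name --- is identical in both proofs; what differs is the object being decided. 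The tree-property version is shorter because a branch name needs no further verification once decided, whereas you must check the ultrafilter axioms, $\kappa$-completeness, and well-foundedness in $V$ (all of which you do correctly, including the key use of $M^{<\kappa}\cap V\subseteq M$ for well-foundedness). What your version buys is that it lands directly in the embedding characterization the rest of the paper actually uses, and it would generalize with no extra work to show that a weakly compact \emph{set} $S\subseteq\kappa$ in $V[G]$ with $S\in V$ was already weakly compact in $V$ (demand $\kappa\in j_U(S)$), which the tree argument does not give. Two cosmetic points: a winning strategy for $\mathcal{G}_\kappa(\P)$ does not literally restrict to one for $\mathcal{G}_\kappa(\P\restrict p)$ --- just have Player I's first move lie below $p$ and continue the original game --- and you should say a word about why $\kappa$ is inaccessible in $V$ (regularity and the strong limit property are downward absolute here since $\powerset(\gamma)^{V}\subseteq\powerset(\gamma)^{V[G]}$ for $\gamma<\kappa$), since clause (3) of Lemma \ref{lemma_characterizations_of_weak_compactness} is being invoked in a context where that is implicitly assumed. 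Neither is a real gap.
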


\begin{proof}
Suppose $\P$ is $\kappa$-strategically closed and $G\subseteq\P$ is generic over $V$. Any $\kappa$-tree $T$ in $V$ remains a $\kappa$-tree in $V[G]$, and thus $T$ has a cofinal branch in $V[G]$. Using a name for this branch, and the strategic closure of $\P$, one may construct a cofinal branch through $T$ in $V$.
\end{proof}

Let us recall two definitions from \cite{MR2063629}. A pair of transitive classes $M\subseteq N$ satisfies the \emph{$\delta$-approximation property} if whenever $A\subseteq M$ is a set in $N$ and $A\cap a\in M$ for any $a\in M$ of size less than $\delta$ in $M$, then $A\in M$. The pair $M\subseteq N$ satisfies the \emph{$\delta$-cover property} if for every set $A$ in $N$ with $A\subseteq M$ and $|A|^N<\delta$, there is a set $B\in M$ with $A\subseteq B$ and $|B|^M<\delta$. One can easily see that many Easton-support iterations $\P$ of length greater than a Mahlo cardinal $\delta$ satisfy the $\delta$-approximation and cover properties by factoring the iteration as $\P\cong\Q*\dot{\R}$ where $\Q$ is $\delta$-c.c. and $\dot{\R}$ is forced by $\Q$ to be ${<}\delta$-strategically closed.

\begin{lemma}[Hamkins, \cite{MR2063629}]\label{lemma_approximation_and_cover}
Suppose that $\kappa$ is a weakly compact cardinal, $S\in P(\kappa)^V$ and $V\subseteq\overline{V}$ satisfies the $\delta$-approximation and cover properties for some $\delta<\kappa$. If $S$ is a weakly compact subset of $\kappa$ in $\overline{V}$ then $S$ is a weakly compact subset of $\kappa$ in $V$.
\end{lemma}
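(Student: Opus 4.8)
The plan is to reduce the statement to the elementary embedding characterization of weak compactness (Lemma \ref{lemma_characterizations_of_weak_compactness}) applied \emph{inside} $V$, and then use the approximation and cover properties to pull the relevant objects down from $\overline{V}$ to $V$. Concretely, suppose $S\in P(\kappa)^V$ is a weakly compact subset of $\kappa$ in $\overline{V}$; I want to verify clause (2) of Lemma \ref{lemma_characterizations_of_weak_compactness} in $V$. So fix an arbitrary $\kappa$-model $M$ in $V$ with $\kappa,S\in M$. Since $\kappa$ is weakly compact in $V$, I may additionally arrange that $M$ contains whatever auxiliary data about the approximation/cover situation I need; in particular, since the $\delta$-cover property gives, for any set $X\subseteq V$ of size $\kappa$ in $\overline{V}$, a covering set in $V$, I will want $M$ to be chosen so that $M\in V$ is transitive, $|M|=\kappa$, $M^{<\kappa}\subseteq M$ in $V$. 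The point is that $M$ is then a $\kappa$-model in $\overline{V}$ as well, because $V\subseteq\overline{V}$ is ${<}\kappa$-covering-and-approximating with $\delta<\kappa$ implies $\kappa$ remains inaccessible and no new ${<}\kappa$-sequences of ordinals are added below $\kappa$ (this is a standard consequence: a new $\kappa$-small sequence would be $\delta$-approximated by ground-model pieces and hence lie in $V$).

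The main steps are then: (1) Work in $\overline{V}$. Since $S$ is weakly compact there and $M$ is a $\kappa$-model in $\overline{V}$ with $\kappa,S\in M$, apply Lemma \ref{lemma_characterizations_of_weak_compactness}(2) to get an elementary embedding $j:M\to N$ in $\overline{V}$ with critical point $\kappa$, $\kappa\in j(S)$, and $N$ a $\kappa$-model in $\overline{V}$. (2) Observe that $j$, as a subset of $M\times N$, is a set of size $\kappa$ in $\overline{V}$ contained in $V$ (since $M,N\in V$ — here one must be a bit careful: $N$ a priori lives in $\overline{V}$, not $V$; so instead one codes $j$ and $N$ together as a single subset of $\kappa$, using that $N$ is transitive of size $\kappa$ and taking a bijection $\kappa\to N$ in $\overline{V}$, then coding the Mostowski-collapsed structure plus the graph of $j$ as a subset $E\subseteq\kappa$). (3) Apply the $\delta$-approximation property to $E\subseteq\kappa\subseteq V$: for every $a\in V$ of size ${<}\delta$, $E\cap a\in V$, since $a$ may be taken to be a bounded subset of $\kappa$ of size ${<}\delta$, and such a short initial-segment-type piece of $E$ is coded by a bounded subset of $\kappa$ which lies in $V$ because no ${<}\delta$-sequences are added (one can also invoke the $\delta$-cover property to find the relevant $a$). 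Hence $E\in V$, and decoding in $V$ recovers a transitive structure $N'$ and an embedding $j':M\to N'$. (4) Check in $V$ that $j':M\to N'$ is elementary with critical point $\kappa$, $\kappa\in j'(S)$, and $N'$ is a $\kappa$-model: elementarity and the critical point and $\kappa\in j'(S)$ are $\Sigma_0$-absolute facts about the coded structure, so they transfer down; $N'$ being transitive and of size $\kappa$ is immediate from the coding, and $(N')^{<\kappa}\cap V\subseteq N'$ follows from $\overline{V}$-closure of $N$ together with absoluteness of ${<}\kappa$-sequences between $V$ and $\overline{V}$. This verifies clause (2) of Lemma \ref{lemma_characterizations_of_weak_compactness} for $S$ in $V$, so $S$ is weakly compact in $V$.

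The hard part will be step (2)--(3): making precise the claim that the embedding $j$ (together with its target $N$) can be coded by a subset of $\kappa$ that lies in $V$. The subtlety is that $N$ is only given to us in $\overline{V}$, so we cannot directly say $j\in V$; we must instead observe that the \emph{graph} of $j$ relative to fixed enumerations of $M$ and $N$ — and the Mostowski-collapse structure of $N$ relative to its enumeration — is a bounded subset of $\kappa$, and that such sets are unchanged in passing from $\overline{V}$ to $V$ because the $\delta$-approximation and cover properties (with $\delta<\kappa$) imply in particular that $P(\eta)^V=P(\eta)^{\overline V}$ for all $\eta<\kappa$ — or at least that every bounded subset of $\kappa$ in $\overline V$ is $\delta$-approximated, hence in $V$. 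Actually the cleanest route avoids re-deriving this: since $\kappa>\delta$ is inaccessible and the pair has the $\delta$-approximation property, any $A\subseteq\kappa$ in $\overline V$ with all $\delta$-small pieces in $V$ is in $V$; and the coding set $E\subseteq\kappa$ has this property because its pieces $E\cap\eta$ for $\eta<\kappa$ are themselves bounded, hence coded by elements of $V_\kappa^{\overline V}=V_\kappa^V$ up to the $\delta$-cover property. Once this coding bookkeeping is done carefully, the rest is routine absoluteness, and the lemma follows. This argument is, of course, exactly Hamkins' \cite{MR2063629}; I would simply cite it, but the above is how I would reconstruct it.
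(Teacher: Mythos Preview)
Your sketch has a genuine gap: the repeated claim that the $\delta$-approximation and cover properties for some $\delta<\kappa$ entail ``no new ${<}\kappa$-sequences'' (or ``no new ${<}\delta$-sequences'', or $V_\kappa^{\overline V}=V_\kappa^V$) is simply false. The canonical example is $\Add(\omega,1)*\dot\Add(\omega_1,1)$, which has the $\omega_1$-approximation and cover properties yet adds a Cohen real. So in general $\overline V$ may have many new bounded subsets of $\kappa$ and new short sequences into $M$. This breaks your argument in two places. First, your ground-model $\kappa$-model $M$ need not satisfy $M^{<\kappa}\cap\overline V\subseteq M$, so you cannot invoke clause (2) of Lemma~\ref{lemma_characterizations_of_weak_compactness} in $\overline V$ for it. Second, and more seriously, even granting an embedding $j:M\to N$ in $\overline V$ and a coding $E\subseteq\kappa$, you have given no reason why $E\cap a\in V$ for $V$-small $a$: the piece $E\cap a$ is a bounded subset of $\kappa$, but bounded subsets of $\kappa$ can perfectly well be new, so the hypothesis of the $\delta$-approximation property is never verified.

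The paper's proof proceeds differently and avoids this trap. Rather than starting from a $V$-side $\kappa$-model and trying to push it up, it invokes \cite[Lemma~16]{MR2063629} to build, in $\overline V$, a transitive $\overline M\models\ZFC^*$ of size $\kappa$ closed under ${<}\kappa$-sequences from $\overline V$, with the special property that $M:=\overline M\cap V$ lies in $V$ and is itself such a model there. One then takes $j:\overline M\to\overline N$ in $\overline V$ witnessing the weak compactness of $S$, and applies the \emph{main theorem} of \cite{MR2063629}---whose proof is where the approximation and cover properties are actually used, in a more delicate way than a one-shot application to a coding set---to conclude that $j\restrict M:M\to N$ is an elementary embedding in $V$ with critical point $\kappa$ and $\kappa\in(j\restrict M)(S)$. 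The point is that Hamkins' theorem does the work of restricting the embedding; you cannot shortcut it by coding $j$ as a subset of $\kappa$ and pulling it down wholesale.
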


\begin{proof}
Suppose $S\in P(\kappa)^V$ is a weakly compact subset of $\kappa$ in $\overline{V}$. Fix $A\in P(\kappa)^V$. By \cite[Lemma 16]{MR2063629}, there is a transitive model $\overline{M}\in\overline{V}$ of some large fixed finite fragment $\ZFC^*$ of $\ZFC$ with $|\overline{M}|^{\overline{V}}=\kappa$ such that $\kappa,A,S\in\overline{M}$, the model $\overline{M}$ is closed under ${<}\kappa$-sequences from $\overline{V}$ and $M=\overline{M}\cap V\in V$ is a transitive model of the finite fragment $\ZFC^*$ with $|M|^V=\kappa$. Since $S$ is weakly compact in $\overline{V}$, it follows that there is an elementary embedding $j:\overline{M}\to\overline{N}$ where $\overline{N}^{<\kappa}\cap\overline{V}\subseteq\overline{N}$ and $\kappa\in j(S)$. Since this embedding satisfies the hypotheses of the main theorem from \cite{MR2063629}, it follows that $j\restrict M:M\to N$ is an elementary embedding in $V$ with critical point $\kappa$. Since $A,S\in M$ and $\kappa\in (j\restrict M)(S)$ we see that $S$ is a weakly compact subset of $\kappa$ in $V$.
\end{proof}

%Then $Z'_i\in \Pi^1_n(\kappa)$ by the ${<}\kappa$-completeness of $\Pi^1_n(\kappa)$. By our inductive assumption about $\vec{Z}\restrict i$, it follows that $X_i\subseteq\left(\bigcap_{j<i}A_j\right)\cup Z'_i=_n A_i\cup Z'_i$.\marginpar{\tiny We need to somehow link $A_i$ to previous $A_j$'s.} Thus, we may let $Z_i=Z'_i\cup\left(A_i\triangle \left(\bigcap_{j<i} A_j\right)\right)\in \Pi^1_n(\kappa)$ and note that $X_i\subseteq A_i\cup Z_i$. The case in which $\kappa\leq i<\kappa^+$ is a limit is similar, replacing the above intersection and union with their diagonal counterpart and applying normality of $\Pi^1_n(\kappa)$. \marginpar{\tiny Check this!}

%Suppose $i<\kappa$ is a limit. First we construct a set $z\in\Pi^1_n(\kappa)$ such that $X_i\subseteq \left(\bigcap_{j<i}X_j\right)  \cup z$. We construct a sequence $\vec{z}=\<z_j\st j< i\>$ of sets in $\Pi^1_n(\kappa)$ by induction such that for all $j<i$ we have $X_i\subseteq X_j\cup z_j$. If $j+1<i$ then we have $X_i\subseteq \Tr_n(X_j)\cup z$ and $X_{j+1}\subseteq \Tr_n(X_j)\cup z'$, thus $X_i\subseteq X_{j+1}\cup (z\cup z')$. If $j<i$ is a limit, let $k+1<j$. Since $X_i\subseteq \Tr_n(X_k)\cup z$ and $X_j\subseteq \Tr_n(X_k)\cup z'$, it follows that $X_i\subseteq \Tr_n(X_k)\cup(z\cup z')$.

\section{Adding a nonreflecting weakly compact set}

In Lemma \ref{lemma_weakly_compacts_are_weakly_compact} above we proved that $\Refl_n(\kappa)$ implies that $\kappa$ is $\omega$-$\Pi^1_n$-indescribable. Taking $n=1$, this shows that if the weakly compact reflection principle holds at $\kappa$ then $\kappa$ is $\omega$-weakly compact. We now show that the converse is consistently false. Let $\WC$ denote the class of weakly compact cardinals and let $\WC_\kappa=\WC\cap\kappa$. Indeed we will prove that if $\kappa$ is $(\alpha+1)$-weakly compact, then there is an Easton-support forcing iteration $\P_{\kappa+1}$ of length $\kappa+1$ such that in $V^{\P_{\kappa+1}}$ we have (1) there is a nonreflecting weakly compact subset of $\kappa$, (2) $\WC_\kappa^V=\WC_\kappa^{V^{\P_{\kappa+1}}}$ and (3) $\kappa$ remains $(\alpha+1)$-weakly compact.

For a cardinal $\gamma$ and a cofinal subset $W\subseteq\gamma$, we define a forcing notion $\Q(\gamma,W)$ as follows. Let $p$ be a condition in $\Q(\gamma,W)$ if and only if
\begin{enumerate}
\item $p$ is a function with $\dom(p)<\gamma$ and $\range(p)\subseteq 2$,
\item if $\eta\leq\dom(p)$ is weakly compact then $\{\alpha<\eta\st p(\alpha)=1\}$ is not a weakly compact subset of $\eta$ and
\item $\supp(p)\subseteq W$.
\end{enumerate} For $p,q\in\Q(\gamma,W)$ let $p\leq q$ if and only if $p\supseteq q$. 

%We will show below that, assuming $\Refl_{wc}(\kappa)$, there is an Easton-support iteration $\P_{\kappa+1}$ involving this forcing $\Q(\gamma,W)$ (for some $W$), which adds a nonreflecting weakly compact subset to $\kappa$ and preserves many of the consequences of $\Refl_{wc}(\kappa)$.

%We say that a condition $p\in\Q_\kappa$ is \emph{nontrivial} if $\dom(p)$ is a weakly compact cardinal and $\supp(p)$ is cofinal in $\dom(p)$. Given a conditions $p,q\in\Q_\kappa$ we say that $q$ is a \emph{nontrivial extension} of $p$ if $q\leq p$ and $\supp(q\setminus p)\neq\emptyset$.

\begin{lemma}\label{lemma_extending}
Assuming that the collection of weakly compact limit points of $W$ is cofinal in $\gamma$, every condition $q\in\Q(\gamma,W)$ can be extended to a condition $p\leq q$ such that $\dom(p)$ is a weakly compact cardinal and $\supp(p)$ is cofinal in $\dom(p)$.
\end{lemma}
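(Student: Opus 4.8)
The plan is to build $p$ as an increasing union of conditions along an increasing sequence of weakly compact cardinals approaching a weakly compact limit. First I would fix, using the hypothesis, a weakly compact limit point $\gamma^*$ of $W$ above $\dom(q)$ which is itself a limit of weakly compact limit points of $W$ (the set of weakly compact limit points of $W$ is cofinal in $\gamma$, and since weakly compact cardinals are Mahlo, their limit points that happen to be weakly compact are cofinal too — more carefully, I would pick $\gamma^*\in\WC$ a limit point of $W$ such that $W\cap\gamma^*$ has weakly compact limit points cofinal in $\gamma^*$; this is possible since $\WC\cap\gamma$ is itself closed-unbounded-in-the-relevant-sense below its limit points). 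I then choose an increasing continuous sequence $\langle\gamma_\xi\st \xi<\gamma^*\rangle$ — or rather indexed so as to be cofinal in $\gamma^*$ — hitting weakly compact limit points of $W$ cofinally, with $\gamma_0=\dom(q)$.

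Next I would construct conditions $q=p_0\leq p_1\leq\cdots$ with $\dom(p_\xi)=\gamma_\xi$, taking unions at limits. At a successor step, given $p_\xi$ with $\dom(p_\xi)=\gamma_\xi$ a weakly compact limit point of $W$, I extend to $p_{\xi+1}$ with domain $\gamma_{\xi+1}$: on the interval $[\gamma_\xi,\gamma_{\xi+1})$ I am free to put, say, a single $1$ at some point of $W$ past $\gamma_\xi$ (to keep the support cofinal) and $0$'s elsewhere — but I must check condition (2) holds at every weakly compact $\eta\leq\gamma_{\xi+1}$. For $\eta<\gamma_{\xi+1}$ this is inherited. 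For $\eta=\gamma_{\xi+1}$, the set $\{\alpha<\gamma_{\xi+1}\st p_{\xi+1}(\alpha)=1\}$ must fail to be weakly compact in $\gamma_{\xi+1}$; since by construction this set meets the interval $[\gamma_\xi,\gamma_{\xi+1})$ in a bounded-below-$\gamma_{\xi+1}$ nonstationary-looking set and $\gamma_{\xi+1}$ is a limit point of $W$ with weakly compact limit points cofinal below it, I would instead be cleverer: arrange that the $1$'s below each $\gamma_{\xi+1}$ avoid some fixed $1$-club, e.g. avoid the weakly compact limit points of $W$ below $\gamma_{\xi+1}$, so that the $1$-set is disjoint from a $1$-club and hence (by the Sun–Hellsten characterization quoted in Section~\ref{section_preliminaries}) not $\Pi^1_1$-indescribable in $\gamma_{\xi+1}$. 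The continuity of the sequence at limits handles the limit case automatically since $\dom$ passes to the sup.

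Finally, set $p=\bigcup_{\xi<\gamma^*}p_\xi$. Then $\dom(p)=\gamma^*$ is weakly compact, $\supp(p)$ is cofinal in $\gamma^*$ by construction, and $p\leq q$. It remains to verify $p\in\Q(\gamma,W)$: clauses (1) and (3) are immediate, and for (2) at $\eta=\gamma^*$ the set $\{\alpha<\gamma^*\st p(\alpha)=1\}$ is disjoint from the $1$-club consisting of the weakly compact limit points of $W$ below $\gamma^*$ (closure of this set in the $1$-club sense uses that $\gamma^*$ was chosen a limit of such points and that weak compactness of $\eta<\gamma^*$ with $C\cap\eta\in\Pi^1_0(\eta)^+$ forces $\eta$ into $C$), hence is not weakly compact in $\gamma^*$; for $\eta<\gamma^*$ weakly compact, clause (2) was maintained at each stage.

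The main obstacle I anticipate is the bookkeeping in the successor step ensuring clause (2) at the \emph{new top} $\gamma_{\xi+1}$ while simultaneously keeping the support cofinal: one must choose a fixed $1$-club $D\subseteq\gamma^*$ in advance (the weakly compact limit points of $W$ below $\gamma^*$, which one must check is genuinely $1$-club in $\gamma^*$) and always place the $1$'s in $W\setminus D$, so that every initial segment's $1$-set is disjoint from a $1$-club of the relevant $\eta$. Verifying that $D$ is $1$-club — i.e. stationary and containing its inaccessible reflection points — is where the hypothesis that weakly compact limit points of $W$ are cofinal in $\gamma$ is really used, together with the fact that a limit of weakly compact cardinals that reflects stationary sets is itself not inaccessible unless... — more precisely, one checks directly that if $\eta$ is inaccessible and $D\cap\eta$ is stationary in $\eta$ then $\eta$ is a weakly compact limit point of $W$, which may require strengthening the choice of $\gamma^*$ and $D$ slightly.
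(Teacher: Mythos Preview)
Your iterative construction introduces two genuine difficulties that the hypothesis does not let you resolve. First, the existence of your $\gamma^*$ --- a weakly compact limit point of $W$ that is itself a limit of weakly compact limit points of $W$ --- is not guaranteed: the hypothesis only gives you that weakly compact limit points of $W$ are cofinal in $\gamma$, and the least such point above $\dom(q)$ is certainly not a limit of others. Second, and more seriously, your proposed $1$-club $D$ (the weakly compact limit points of $W$ below $\gamma^*$) is not $1$-club: if $\eta$ is inaccessible and $D\cap\eta$ is stationary in $\eta$, you need $\eta\in D$, i.e.\ $\eta$ weakly compact, but an inaccessible stationary limit of weakly compact cardinals need not be weakly compact. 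You flag this at the end, but there is no mild strengthening of the choice of $D$ that repairs it.

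The paper's argument avoids all of this by going in one step to the \emph{least} weakly compact limit point $\delta$ of $W$ above $\dom(q)$. Minimality does the work your $1$-club was meant to do: if $\WC_\delta\cap W$ were weakly compact in $\delta$, then intersecting with the club $\Lim(W)\cap(\dom(q),\delta)$ would produce a smaller weakly compact limit point of $W$ above $\dom(q)$, contradicting minimality. So one simply sets $p\restrict[\dom(q),\delta)$ to be the characteristic function of $\WC_\delta\cap W\cap[\dom(q),\delta)$. Clause (2) at $\delta$ holds because $\supp(p)$ agrees with the non--weakly-compact set $\WC_\delta\cap W$ modulo a bounded set; clause (2) at any weakly compact $\eta\in(\dom(q),\delta)$ holds because minimality forces $\eta\notin\Lim(W)$, so $\supp(p)\cap\eta\subseteq W\cap\eta$ is bounded. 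No transfinite recursion, no auxiliary $1$-club, no second-order limit point is needed.
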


\begin{proof}
Let $\delta$ be the least element of $\WC_\gamma\cap \Lim(W)$ greater than $\dom(q)$. One can use the usual reflection arguments to show that $\WC_\delta\cap W$ is not a weakly compact subset of $\delta$ (if it were then $\delta$ would not be the least such cardinal by elementarity). Define $p:\delta\to 2$ by letting $p\restrict[\dom(q),\delta)$ be the characteristic function of $\WC_\delta\cap W\cap[\dom(q),\delta)$ and $p\restrict\dom(q)=q$.
\end{proof}

The proof of the next lemma is very similar to that of the analogous fact about the forcing to add a nonreflecting stationary set of cofinality $\omega$ ordinals in a regular cardinal \cite[Example 6.5]{Cummings:Handbook}.

\begin{lemma}
Suppose $\mu\leq\gamma$ is the least weakly compact cardinal such that $W\cap\mu$ is a weakly compact subset of $\mu$. Then $\Q(\gamma,W)$ is ${<}\mu$-closed and $\gamma$-strategically closed, but not ${\leq}\mu$-closed.
\end{lemma}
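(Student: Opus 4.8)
The plan is to verify the three closure assertions separately, using in an essential way the characterization of weakly compactness under $\leq\mu$-distributive forcing from Lemma~\ref{lemma_ground_model_distributive} and the definition of $\mu$ as the least weakly compact cardinal with $W\cap\mu\in\Pi^1_1(\mu)^+$.

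\emph{${<}\mu$-closure.} First I would show that $\Q(\gamma,W)$ is ${<}\mu$-closed. Let $\lambda<\mu$ and let $\langle p_\xi\st\xi<\lambda\rangle$ be a descending sequence of conditions. Set $p=\bigcup_{\xi<\lambda}p_\xi$. Clearly $p$ is a function into $2$ with domain some ordinal less than $\gamma$ and $\supp(p)\subseteq W$, so conditions (1) and (3) are immediate. The only thing to check is condition (2): for every weakly compact $\eta\leq\dom(p)$, the set $S_\eta=\{\alpha<\eta\st p(\alpha)=1\}$ is not weakly compact in $\eta$. If $\eta<\dom(p)$ then $S_\eta=\{\alpha<\eta\st p_\xi(\alpha)=1\}$ for some fixed $\xi$ with $\eta\leq\dom(p_\xi)$, so non-weak-compactness follows from $p_\xi$ being a condition. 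The interesting case is $\eta=\dom(p)$, which can only be weakly compact if $\dom(p)=\sup_{\xi<\lambda}\dom(p_\xi)$ has cofinality $\geq\mu>\lambda$ — but since the sequence has length $\lambda$ this forces the sequence to be eventually constant in domain, reducing to the previous case. Hence $p\in\Q(\gamma,W)$ and it is a lower bound.

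\emph{Failure of ${\leq}\mu$-closure.} To see that $\Q(\gamma,W)$ is not ${\leq}\mu$-closed, I would build a descending $\mu$-sequence with no lower bound. Using Lemma~\ref{lemma_extending} (whose hypothesis we may assume holds in the relevant application, as $W$ will be chosen to have cofinally many weakly compact limit points) together with the minimality of $\mu$, one constructs $\langle p_\xi\st\xi<\mu\rangle$ descending with $\sup_{\xi<\mu}\dom(p_\xi)=\mu$ and with $\bigcup_{\xi<\mu}\supp(p_\xi)\supseteq W\cap\mu$ up to a non-weakly-compact error; in fact the most direct construction simply takes $p_\xi$ to be (an initial segment of) the characteristic function of $W\cap\mu$, i.e. $p_\xi=\chi_{W\cap\mu}\restrict\delta_\xi$ for a continuous increasing cofinal sequence $\langle\delta_\xi\st\xi<\mu\rangle$ of ordinals below $\mu$, each chosen not weakly compact so that each $p_\xi$ genuinely satisfies clause (2). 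Each $p_\xi$ is a condition since every weakly compact $\eta<\mu$ satisfies $W\cap\eta\notin\Pi^1_1(\eta)^+$ by the minimality of $\mu$. But the only possible lower bound is $p=\chi_{W\cap\mu}\restrict\mu$, whose domain $\mu$ is weakly compact and for which $\{\alpha<\mu\st p(\alpha)=1\}=W\cap\mu$ \emph{is} weakly compact in $\mu$; so $p\notin\Q(\gamma,W)$ and the sequence has no lower bound.

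\emph{$\gamma$-strategic closure.} Finally I would exhibit a winning strategy for Player II in $\mathcal{G}_\gamma(\Q(\gamma,W))$. The strategy is: at limit stages Player II plays the union of the conditions played so far, \emph{together with} enough zeros to push the domain up to the next weakly compact limit point of $W$ (or just up to a non-weakly-compact ordinal), so that the domain of the resulting condition is never itself weakly compact at any stage where a union is being formed; at successor stages II plays any legal extension. Clause (2) is then maintained at unions precisely because the running domain is kept away from weakly compact cardinals — this is the key device that circumvents the obstruction exhibited in the non-${\leq}\mu$-closure argument, and it works because below $\mu$ there are no problematic $\eta$, while at and above $\mu$ the strategy simply never lets the domain of a union-stage condition land on a weakly compact cardinal (there is always room, since $\Lim(W)\cap\WC_\gamma$ is cofinal in $\gamma$, or more simply the non-weakly-compact ordinals are cofinal). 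Carrying this through for $\gamma$ many moves yields a play whose final union is a condition, so II wins.

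The main obstacle is clause~(2) at limit stages, i.e. controlling what happens to the set of ones of a union when the union's domain happens to be weakly compact; the whole point of the proof is that this is \emph{forced} to be problematic exactly at $\mu$ (giving the failure of ${\leq}\mu$-closure) yet can always be \emph{dodged} by a careful bookkeeping strategy that steers the domain away from weakly compact cardinals (giving $\gamma$-strategic closure), and is automatically unproblematic below $\mu$ by minimality of $\mu$ (giving ${<}\mu$-closure).
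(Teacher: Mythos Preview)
Your argument has two genuine gaps.

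For the ${<}\mu$-closure, your claim that $\eta=\dom(p)$ weakly compact forces $\cf(\eta)\geq\mu$ is false: weakly compact cardinals are regular, so $\cf(\eta)=\eta$, but there may well be weakly compact cardinals $\eta<\mu$ (indeed in the intended application $W\subseteq\WC_\gamma$, so there are many). When $\eta<\mu$ is such a cardinal and is a genuine supremum of the $\dom(p_\xi)$, the sequence need not be eventually constant in domain. The paper's argument uses the minimality of $\mu$ directly at this point: if $\eta<\mu$ is weakly compact then $W\cap\eta$ is not a weakly compact subset of $\eta$, and since $\supp(p)\subseteq W\cap\eta$, clause~(2) holds at $\eta$ and the union is a condition.

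For the $\gamma$-strategic closure, your proposed strategy does not succeed. At a limit stage $\alpha$ set $\gamma_\alpha=\sup_{i<\alpha}\dom(p_i)$; the union $f=\bigcup_{i<\alpha}p_i$ has domain $\gamma_\alpha$, and for $f$ (or any extension of it) to be a condition one must verify clause~(2) at $\gamma_\alpha$ itself whenever $\gamma_\alpha$ is weakly compact. Extending $f$ by zeros beyond $\gamma_\alpha$ does nothing to change $\{\beta<\gamma_\alpha:f(\beta)=1\}$, so your ``push the domain past'' device is irrelevant to the actual obstruction; you have confused the domain of the condition II plays with the ordinal $\gamma_\alpha$ at which clause~(2) must be checked. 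Since you allow II to play arbitrarily at her successor stages, Player I can arrange that the set of $1$'s below $\gamma_\alpha$ is essentially $W\cap\gamma_\alpha$, which may be weakly compact once $\gamma_\alpha\geq\mu$. The paper's strategy is different and is the missing idea: at \emph{every} even stage $\alpha$, Player II computes $\gamma_\alpha$ and sets $p_\alpha(\gamma_\alpha)=0$ with $\dom(p_\alpha)=\gamma_\alpha+1$. Then at any limit $\eta$ with $\gamma_\eta=\eta$ weakly compact, the ordinals $\{\gamma_i:i<\eta\}$ form a club in $\eta$ on which $p_\eta$ is identically $0$, so the set of $1$'s is nonstationary and hence not weakly compact.
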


\begin{proof}
It is easy to see that $\Q(\gamma,W)$ is ${<}\mu$-closed. Suppose $\delta<\mu$ and $\<p_i\st i<\delta\>$ is a decreasing sequence of conditions in $\Q(\gamma,W)$. Let $\eta=\sup\{\dom(p_i)\st i<\delta\}$. If $\eta$ is not weakly compact then $\bigcup\{p_i\st i<\delta\}\in \Q(\gamma,W)$ is a lower bound of the sequence. On the other hand, if $\eta$ is weakly compact, then $\eta=\delta<\mu$, which implies that $W\cap\eta$ is not a weakly compact subset of $\eta$. It follows that the support $\supp(\bigcup\{p_i\st i<\delta\})$ is a subset of $W$ and hence is not a weakly compact subset of $\eta$. Thus $\bigcup\{p_i\st i<\delta\}$ is a condition in $\Q(\gamma,W)$.

It is also easy to see that $\Q(\gamma,W)$ is not ${\leq}\mu$-closed. For each $i<\mu$ let $p_i$ be the characteristic function of $W\cap i$. Then $\<p_i\st i<\mu\>$ is a decreasing sequence of conditions in $\Q(\gamma,W)$ with no lower bound.

To prove that $\Q(\gamma,W)$ is $\gamma$-strategically closed we must argue that Player II has a winning strategy in the game $\mathcal{G}_\gamma(\Q(\gamma,W))$. The game begins with Player II playing $p_0=\emptyset$. As the game proceeds, Player II may use the following strategy. At an even stage $\alpha$, Player II calculates $\gamma_\alpha=\sup\{\dom(p_i)\st i<\alpha\}$ and then defines $p_\alpha$ by setting $\dom(p_\alpha)=\gamma_\alpha+1$, $p_\alpha\restrict\gamma_\alpha=\bigcup_{i<\alpha}p_i$ and $p_\alpha(\gamma_\alpha)=0$. To check that this strategy succeeds, we must verify that for all limit stages $\eta<\gamma$ the strategy produces a condition $p_\eta$. If $\eta<\gamma$ is not weakly compact then $p_\eta$ is clearly a condition. If $\eta<\gamma$ is weakly compact one of two things must occur. Either $\gamma_\eta=\sup\{\gamma_i\st i<\eta\}$ is equal to $\eta$ or $\gamma_\eta>\eta$. If $\gamma_\eta>\eta$ then $\gamma_\eta$ is singular, in which case $p_\eta=\bigcup_{i<\eta}p_i\cup\{(\gamma_\eta,0)\}$ is a condition. Otherwise, if $\gamma_\eta=\eta$, since $\gamma_\eta$ is weakly compact, in order to see that $p_\eta=\bigcup_{i<\eta}p_i\cup\{(\gamma_\eta,0)\}$ is a condition, we must check that $\{\alpha<\eta\st p_\eta(\alpha)=1\}$ is not weakly compact. The set $\{\gamma_i\st i<\eta\}$ is club in $\eta=\gamma_\eta$ and Player II has ensured that $p_\eta(\gamma_i)=0$ for all $i<\eta$. Thus $\{\alpha<\eta\st p_\eta(\alpha)=1\}$ is nonstationary and hence not weakly compact.
\end{proof}

\begin{lemma}
For every cardinal $\delta<\gamma$, there is a ${<}\delta$-directed closed open dense subset of $\Q(\gamma,W)$.
\end{lemma}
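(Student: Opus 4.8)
The plan is to exhibit the dense open subset
\[
D_\delta = \{p \in \Q(\gamma,W) \st \dom(p) > \delta\}
\]
and verify that it is ${<}\delta$-directed closed. (Passing to a proper subset is genuinely needed, since once $\delta$ exceeds the cardinal $\mu$ of the previous lemma the poset $\Q(\gamma,W)$ itself fails to be even ${<}\delta$-closed.) Openness (downward closure) of $D_\delta$ is immediate, since $q\le p$ forces $\dom(q)\ge\dom(p)$. For density I would argue as follows: given an arbitrary condition $q$, if $\dom(q)>\delta$ there is nothing to do, and otherwise I would extend $q$ to the function $q'$ with domain $\delta+1$ agreeing with $q$ below $\dom(q)$ and identically $0$ on $[\dom(q),\delta]$. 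Then $\delta+1<\gamma$ since $\gamma$ is a cardinal greater than $\delta$, the support is unchanged so clause (3) is preserved, and the weakly compact cardinals $\eta\le\dom(q')$ split into two cases: for $\eta\le\dom(q)$ the set $\{\alpha<\eta\st q'(\alpha)=1\}$ equals $\{\alpha<\eta\st q(\alpha)=1\}$, not weakly compact in $\eta$ because $q$ is a condition; and for $\dom(q)<\eta\le\delta$ (the value $\delta+1$ being excluded as $\eta$ is a cardinal) this set equals $\supp(q)$, a subset of $\dom(q)<\eta$, hence bounded and not weakly compact in $\eta$. So $q'\le q$ lies in $D_\delta$.

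The substance is ${<}\delta$-directed closure. Given a directed family $\{p_i\st i\in I\}\subseteq D_\delta$ with $|I|<\delta$ (and $I$ nonempty, using $D_\delta\neq\emptyset$), I would form $p=\bigcup_{i\in I}p_i$. Directedness makes the $p_i$ pairwise compatible, so $p$ is a function into $2$ with $p\restrict\dom(p_i)=p_i$ for each $i$, with domain $\eta:=\sup_{i\in I}\dom(p_i)$ and support $\supp(p)=\bigcup_i\supp(p_i)\subseteq W$; moreover $\eta<\gamma$ because $\gamma$ is regular and $|I|<\delta<\gamma$. It then remains only to verify clause (2) for $p$, namely that $\{\alpha<\eta'\st p(\alpha)=1\}$ is not weakly compact in $\eta'$ whenever $\eta'\le\eta$ is weakly compact; once this is done, $p\in\Q(\gamma,W)$, $\dom(p)\ge\dom(p_i)>\delta$ gives $p\in D_\delta$, and $p$ is a lower bound of the family. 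For $\eta'<\eta$ this is routine: choosing $i$ with $\dom(p_i)>\eta'$, the functions $p$ and $p_i$ agree below $\eta'$, so the conclusion is exactly clause (2) for the condition $p_i$.

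The one genuinely delicate case, which I expect to be the crux, is $\eta'=\eta$. Here I would use that a weakly compact cardinal is regular together with the defining inequality of $D_\delta$: if the supremum $\eta=\sup_i\dom(p_i)$ were not attained, then $\{\dom(p_i)\st i\in I\}$ would witness $\cf(\eta)\le|I|<\delta$, whereas $\delta<\dom(p_{i_0})\le\eta$ for any fixed $i_0\in I$, contradicting regularity of $\eta$. Hence $\eta=\dom(p_{i_0})$ for some $i_0$, and since $p\supseteq p_{i_0}$ and these functions have the same domain, $p=p_{i_0}$ is itself one of the given conditions, so clause (2) at $\eta'=\eta$ is inherited from $p_{i_0}\in\Q(\gamma,W)$. (The same argument in fact yields ${<}\delta$-closure of $D_\delta$, though only directed closure is needed in what follows.) A secondary point worth flagging is that the step $\eta<\gamma$ tacitly uses that $\gamma$ is regular, which is harmless here since in every application of this lemma $\gamma$ is weakly compact.
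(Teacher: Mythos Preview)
Your proof is correct and follows the same approach as the paper: the same dense open set $D_\delta=\{p:\dom(p)>\delta\}$, with the key point that the domain $\eta$ of the union of a ${<}\delta$-sized directed family from $D_\delta$ satisfies $\eta>\delta$ while $\cf(\eta)<\delta$ unless the supremum is attained, so $\eta$ cannot be weakly compact in the nontrivial case. Your treatment is in fact more careful than the paper's, which writes ``$\cf(\eta)\leq|A|<\delta$'' without explicitly separating off the attained-supremum case.
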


\begin{proof}
Suppose $\delta<\gamma$. Let $D=\{p\in\Q(\gamma,W)\st \dom(p)>\delta\}$. Clearly $D$ is open and dense. Suppose $A\subseteq D$ is a directed set of conditions and $|A|<\delta$. Let $\eta=\sup\{\dom(p)\st p\in A\}$. Since $\eta>\delta$ and $\cf(\eta)\leq |A|<\delta$, it follows that $\eta$ is not weakly compact. Thus $p=\bigcup A$ is a condition in $\Q(\gamma,W)$.
\end{proof}

Let $W\subseteq\WC_\kappa$ be a weakly compact subset of $\kappa$ and suppose $\Refl_{wc}(\kappa)$ holds. Let $\P_{\kappa+1}=\<(\P_\gamma,\dot{\Q}_\gamma)\st \gamma\leq\kappa \>$ be the length $\kappa+1$ iteration with Easton-support such that the stage $\gamma$ forcing is defined as follows.
\begin{itemize}
\item If $\gamma\leq\kappa$ is a Mahlo limit point of $\Tr_\wc(W)$, then $\dot{\Q}_\gamma$ is a $\P_\gamma$-name for the forcing $\Q(\gamma,W\cap\gamma)$ defined above.\footnote{It is important that we force at some stages $\gamma$ with $\Q(\gamma,W)$ at which $W\cap\gamma$ is not a weakly compact subset of $\gamma$ because in a crucial part of the argument we will have that on the $j$-side, where $j$ is some elementary embedding with critical point $\kappa$, the set $W=j(W)\cap\kappa$ is not a weakly compact subset of $\kappa$ and this is required in order for a master condition to exist.}
\item Otherwise, $\dot{\Q}_\gamma$ is a $\P_\gamma$-name for trivial forcing.
\end{itemize}

Next we will show that, as intended, in certain contexts, the iteration $\P_{\kappa+1}$ adds a nonreflecting weakly compact set and preserves many instances of weak compactness. In the next theorem, the assumption that $\Refl_\wc(\kappa)$ holds is made to avoid trivialities. For example: if $\Refl_\wc(\kappa)$ fails, then $\kappa$ already has a nonreflecting weakly compact subset.

\begin{theorem}\label{theorem_adding_a_nonreflectin_weakly_compact_set}
Suppose $\Refl_\wc(\kappa)$ holds, $W\subseteq\WC_\kappa$ is a weakly compact subset of $\kappa$ and $\GCH$ holds. Then there is a cofinality-preserving forcing $\P$ such that if $G$ is $(V,\P)$-generic then the following conditions hold.
\begin{enumerate}
\item In $V[G]$, there is a nonreflecting weakly compact set $E\subseteq W$ (thus $W$ remains weakly compact).
\item The class of weakly compact cardinals is preserved: $\WC^V=\WC^{V[G]}$.
\item If $S\subseteq\gamma<\kappa$ is a weakly compact subset of $\gamma$ in $V$ and $W\cap\gamma\subseteq S$, then $S$ remains weakly compact in $V[G]$.
\end{enumerate}
\end{theorem}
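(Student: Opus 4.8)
The plan is to set $\P=\P_{\kappa+1}$, the Easton-support iteration defined just above the theorem statement, and verify its three properties in turn. Cofinality preservation follows by the standard factoring argument: at each Mahlo stage $\gamma$ the forcing $\Q(\gamma,W\cap\gamma)$ is ${<}\delta$-closed (indeed ${<}\delta$-directed closed on a dense set) for every $\delta<\gamma$ and $\gamma$-strategically closed by the lemmas above, so a tail of the iteration past any cardinal $\delta$ is ${<}\delta$-strategically closed while the initial segment has size ${<}\delta$ (hence is $\delta$-c.c.); combined with $\GCH$ this gives that $\P_{\kappa+1}$ preserves all cofinalities. For item (1), let $E=\{\alpha<\kappa\st g(\alpha)=1\}$ where $g=\bigcup G_\kappa$ is the generic function added by the stage-$\kappa$ forcing $\Q(\kappa,W)$; by genericity $E\subseteq W$ is cofinal in $\kappa$, and by clause (2) in the definition of $\Q(\kappa,W)$, $E$ has no weakly compact proper initial segment in $V[G]$ — note this only requires that weak compactness of initial segments is correctly computed, which it will be since the tail forcing past any $\eta<\kappa$ is sufficiently closed. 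The real work for (1) is to show $E$ \emph{is} weakly compact in $V[G]$, and likewise $W$ remains weakly compact.

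For the positive preservation of weak compactness (both $E$ weakly compact in (1) and the weakly compact $S$ of (3), and the upward direction $\WC^V\subseteq\WC^{V[G]}$ of (2)), I would use the lifting machinery of Section 4 in the style of Remark \ref{remark_preserving_weak_compactness}. Fix a target cardinal (e.g. $\kappa$ itself for item (1), with $S$-set $E$), fix $A\in\powerset(\kappa)^{V[G]}$ with a nice name $\dot A\in H_{\kappa^+}^V$, apply Lemma \ref{lemma_X_not_weakly_compact_in_N} in $V$ to get a $\kappa$-model $M$ containing $\kappa,A,W,\Tr_\wc(W),\dot A,\P_{\kappa+1}$ and an embedding $j:M\to N$ with $\crit(j)=\kappa$, $\kappa\in j(W)$, $N$ a $\kappa$-model, and crucially $N\models$ ``$W$ is not a weakly compact subset of $\kappa$'' — this last point (the reason Lemma \ref{lemma_X_not_weakly_compact_in_N} is stated) is exactly what makes a master condition exist: in $N$, $j(\P_{\kappa+1})$ has stage-$\kappa$ forcing $\Q(\kappa,W)$ with $W$ not weakly compact in $\kappa$, so the generic function $\bigcup G_\kappa$ (viewed as a condition of length $\kappa$) satisfies clause (2) and is a legitimate condition in $N$'s version of that forcing. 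Then I lift $j$ through $\P_\kappa$ using that $N$ is closed enough and the factor forcing is sufficiently closed (Lemmas \ref{lemma_lifting_criterion}, \ref{lemma_diagonalization_criterion}, \ref{lemma_ground_closure}), handle stage $\kappa$ by constructing a generic below the master condition extending $j[G_\kappa]$, and handle the tail $j(\P_{\kappa+1})$ above stage $\kappa$ by diagonalization (it is ${<}j(\kappa)$-strategically closed in $N[j\restrict\P_\kappa \text{-generic}]$, and $M[G]$ is a $\kappa$-model). Then $\kappa\in j(W)$ and the generic ensures $\kappa\in j(E)$, so $E$ (and $W$) are weakly compact in $V[G]$; item (3) is the same argument with $\gamma$ in place of $\kappa$, using $W\cap\gamma\subseteq S$ to see the relevant non-reflection clause places us on the correct side.

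For item (2) the downward direction $\WC^{V[G]}\subseteq\WC^V$ — that the iteration creates no new weakly compact cardinals — I would invoke Lemma \ref{lemma_approximation_and_cover}: factor $\P_{\kappa+1}$ at a candidate cardinal $\delta$ as $\P_\delta * \dot\P^{\tail}$, where $\P_\delta$ has size ${<}\delta$ when $\delta$ is inaccessible (hence the extension $V\subseteq V^{\P_\delta}$ is trivially fine by Lemma \ref{lemma_small_forcing}) and $\dot\P^{\tail}$ is forced to be ${<}\delta$-strategically closed, so the pair $V^{\P_\delta}\subseteq V^{\P_{\kappa+1}}$ has the $\delta$-approximation and $\delta$-cover properties; then Lemma \ref{lemma_approximation_and_cover} (together with Lemma \ref{lemma_small_forcing}) gives that if $\delta$ is weakly compact in $V[G]$ it was weakly compact in $V$. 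For $\delta=\kappa$ one argues separately: $\kappa$ is weakly compact in $V[G]$ (shown in (1)) and was weakly compact in $V$ by hypothesis. I expect the main obstacle to be the bookkeeping at stage $\kappa$ of the lifting argument: one must verify carefully that $\bigcup G_\kappa$ really is a condition in the $N$-version of $\Q(\kappa,W)$ (using $N\models$ ``$W$ not weakly compact'', plus that the relevant weakly compact $\eta<\kappa$ and the weak compactness of $\{\alpha<\eta\st g(\alpha)=1\}$ are computed correctly in $N$ versus $V$), and then that a single generic filter over $N$ below this master condition can be built in $V[G]$ absorbing $j[G_\kappa]$ — this is where the precise choice to force with $\Q(\gamma,W)$ at stages where $W\cap\gamma$ need not be weakly compact, flagged in the footnote, is essential.
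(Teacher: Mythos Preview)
Your approach matches the paper's and is correct in outline, but one subtlety you gloss over is where the real work lies. The master condition $f=\bigcup H_\kappa$ (which must be extended by $\{(\kappa,1)\}$ so that $\kappa\in j(E)$) has to be verified as a condition in $j(\Q(\kappa,W))$, and that poset lives in $N[\widehat G_{j(\kappa)}]$, \emph{not} in $N$. Clause~(2) of the definition of $\Q$ requires that $E=\{\alpha:f(\alpha)=1\}$ is not a weakly compact subset of $\kappa$ \emph{as computed in $N[\widehat G_{j(\kappa)}]$}; knowing only that $W$ is not weakly compact in $N$ does not immediately give this, since weak compactness is not upward absolute. The paper closes this gap by invoking Lemma~\ref{lemma_approximation_and_cover} a second time, now applied to the pair $N\subseteq N[\widehat G_{j(\kappa)}]$: since $j(\P_\kappa)$ has the $\delta$-approximation and cover properties over $N$ for some $\delta<\kappa$, the non-weak-compactness of $W$ transfers upward, and hence $E\subseteq W$ is not weakly compact there either. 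Your final paragraph flags the master-condition verification as the main obstacle but locates the absoluteness issue between $N$ and $V$; the issue that actually matters is between $N$ and $N[\widehat G_{j(\kappa)}]$, and it is precisely Hamkins' lemma---the same one you cite only for item~(2)---that resolves it.

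A smaller structural point: $j(\P_\kappa)$ already has $\Q(\kappa,W)$ at its stage~$\kappa$ (since $\kappa$ is a Mahlo limit point of $\Tr_\wc(j(W))\cap\kappa=\Tr_\wc(W)$ in $N$), so the factorization is $j(\P_\kappa)\cong\P_\kappa*\dot\Q(\kappa,W)*\dot\P^N_{\kappa+1,j(\kappa)}$ and the $V$-generic $H_\kappa$ is used directly at stage~$\kappa$, with diagonalization only for the tail $\P^N_{\kappa+1,j(\kappa)}$. The master-condition step is then for $j(\Q(\kappa,W))$, the stage-$j(\kappa)$ forcing. Your description (``lift through $\P_\kappa$, handle stage $\kappa$ by master condition, handle the tail by diagonalization'') conflates these two roles of $\Q(\kappa,W)$.
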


\begin{proof}

Let $G_{\kappa+1}$ be $(V,\P_{\kappa+1})$-generic. Since the iteration $\P_{\kappa+1}$ uses Easton-support and at each nontrivial stage $\gamma\leq\kappa$ we have $\forced_{\P_\gamma}$ ``$\dot{\Q}_\gamma$ is $\gamma$-strategically closed'', standard arguments involving factoring the iteration show that cofinalities are preserved.

The forcing $\P_{\kappa+1}$ can be factored as $\P_\kappa*\dot{\Q}(\kappa,W)$. For $\gamma\leq\kappa$, let $H_\gamma$ denote the $(V[G_\gamma],\Q_\gamma)$-generic filter obtained from $G_{\kappa+1}$. Thus, $G_{\kappa+1} = G_\kappa*H_\kappa$. Let $f=\bigcup H_\kappa:\kappa\to 2$ and let $E=\{\alpha<\kappa\st f(\alpha)=1\}$.

First we prove (1). Let us show that in $V[G_{\kappa+1}]$, for each $\eta<\kappa$, the set $E\cap\eta$ is not a weakly compact subset of $\eta$. If $\eta$ is not weakly compact in $V[G_{\kappa+1}]$ then neither is $E\cap\eta$. Suppose $\eta$ is weakly compact in $V[G_{\kappa+1}]$. Since $\Q(\kappa,W)$ is $\kappa$-strategically closed, it follows that $E\cap\eta\in V[G_\kappa]$ and thus $f\restrict\eta$ is a condition in $\Q(\kappa,W)$. By Lemma \ref{lemma_ground_model_distributive},  $\eta$ is weakly compact in $V[G_\kappa]$. Hence by definition of $\Q(\kappa,W)$, the set $E\cap\eta$ is not weakly compact in $V[G_\kappa]$. Applying Lemma \ref{lemma_ground_model_distributive} again, we conclude that $E\cap \eta$ is not weakly compact in $V[G_{\kappa+1}]$.

We will show that in the extension $V[G_{\kappa+1}]$, the set $E\subseteq\kappa$ is a weakly compact subset of $\kappa$ using the method outlined in Remark \ref{remark_preserving_weak_compactness} above. Fix $A\in \mathscr{P}(\kappa)^{V[G_{\kappa+1}]}$ and let $\dot{E},\dot{A}\in H_{\kappa^+}$ be $\P_{\kappa+1}$-names with $E=\dot{E}_{G_{\kappa+1}}$ and $A=\dot{A}_{G_{\kappa+1}}$. Working in $V$, we may apply Lemma \ref{lemma_X_not_weakly_compact_in_N} to find a $\kappa$-model $M$ with $\kappa,\P_{\kappa+1},\dot{A},\dot{E},W\in M$ and an elementary embedding $j:M\to N$ with critical point $\kappa$ where $N$ is also a $\kappa$-model such that $\kappa\in j(W)$ and $W$ is not a weakly compact subset of $\kappa$ in $N$. Since $\kappa$ is an inaccessible limit point of $\Tr_{\wc}(W)^N=\Tr_\wc(W)^V$ in $N$ and $N^{<\kappa}\cap V\subseteq N$ we have $j(\P_\kappa)\cong \P_\kappa*\dot{\Q}(\kappa,W)*\dot{\P}'_{\kappa,j(\kappa)}$ where $\dot{\P}'_{\kappa,j(\kappa)}$ is a $\P_{\kappa+1}$-name for the tail of $N$'s version of the iteration $j(\P_\kappa)$. Since $\P_\kappa$ is $\kappa$-c.c. in $V$ and $\Q(\kappa,W)$ is ${<}\kappa$-distributive in $V[G_\kappa]$, it follows from Lemma \ref{lemma_generic_closure}, that $M[G_{\kappa+1}]$ is closed under ${<}\kappa$-sequences in $V[G_{\kappa+1}]$. Thus, using the facts that $|N|^V=\kappa$ and that in $N[G_{\kappa+1}]$, the poset $\P_{\kappa,j(\kappa)}$ contains a ${<}\kappa$-directed closed dense subset, we can build an $(N[G_{\kappa+1}]$, $\P_{\kappa,j(\kappa)})$-generic filter $K\in V[G_{\kappa+1}]$. Since the supports of conditions in $G$ have size less than $\kappa$ we have $j[G]\subseteq G_\kappa*H_\kappa*K$, and thus we can extend the embedding to $j:M[G_\kappa]\to N[\widehat{G}_{j(\kappa)}]$ where $\widehat{G}_{j(\kappa)}=G_\kappa*H_\kappa*K$. Since $K\in V[G_{\kappa+1}]$ we have $N[\widehat{G}_{j(\kappa)}]^{<\kappa}\cap V[G_{\kappa+1}]\subseteq N[\widehat{G}_{j(\kappa)}]$. Since $W$ is not weakly compact in $N$, it follows from Lemma \ref{lemma_approximation_and_cover} that $W$ is not weakly compact in $N[\widehat{G}_{j(\kappa)}]$.  

Let us now argue that $f\cup\{(\kappa,1)\}$ is a condition in $j(\Q(\kappa,W))$. We have $f,E\in N[\widehat{G}_{j(\kappa)}]$ because $H_\kappa\in N[\widehat{G}_{j(\kappa)}]$. It will suffice to show that the set $E$ is not a weakly compact subset of $\kappa$ in $N[\widehat{G}_{j(\kappa)}]$. From the definition of $\Q(\kappa,W)\in V[G_\kappa]$, we have $E\subseteq W$, and since $W$ is not a weakly compact subset of $\kappa$ in $N[\widehat{G}_{j(\kappa)}]$, it follows that $E$ is not a weakly compact subset of $\kappa$ in $N[\widehat{G}_{j(\kappa)}]$. This implies that $\bigcup j[H_\kappa]=\bigcup H_\kappa = f$ is a condition in $j(\Q(\kappa,W))$, and thus so is $p=f\cup\{(\kappa,1)\}$.

In $N[\widehat{G}_{j(\kappa)}]$, the poset $j(\Q(\kappa,W))$ contains a ${\leq}\kappa$-closed dense subset, and hence we can build an $(N[\widehat{G}]_{j(\kappa)},j(\Q(\kappa,W)))$-generic filter $\widehat{H}_{j(\kappa)}\in V[G_{\kappa+1}]$ with $p\in \widehat{H}_{j(\kappa)}$. This guarantees that $j[H_\kappa]\subseteq\widehat{H}_{j(\kappa)}$, and thus the embedding extends to $j:M[G_\kappa*H_\kappa]\to N[\widehat{G}_{j(\kappa)}*\widehat{H}_{j(\kappa)}]$. Since $(\kappa,1)\in \bigcup \widehat{H}_{j(\kappa)}$, it follows that $\kappa\in j(E)$. Therefore $E$ is a weakly compact subset of $\kappa$ in $V[G*g]$, and hence we have established (1).

For (2), let us now show that the class of weakly compact cardinals is preserved; i.e. that $\WC^{V[G_{\kappa+1}]}=\WC^V$. By the work of Hamkins, it follows that $\WC^{V[G_{\kappa+1}]}\subseteq \WC^V$ (see Lemma \ref{lemma_approximation_and_cover} above or \cite[Lemma 16]{MR2063629}). We must show that $\WC^{V[G_{\kappa+1}]}\supseteq \WC^V$. Suppose $\gamma$ is weakly compact in $V$. If $\gamma>\kappa$, then by Lemma \ref{lemma_small_forcing}, it follows that $\gamma$ is weakly compact in $V[G_{\kappa+1}]$. Suppose $\gamma<\kappa$. Since in $V[G_{\gamma+1}]$ the forcing $\P_{\gamma,\kappa+1}$ is ${\leq}\gamma$-distributive, it will suffice to argue that $\gamma$ remains weakly compact in $V[G_{\gamma+1}]$. There are several cases to consider.

\textsc{Case 1.} Suppose $\gamma$ is a limit point of $\Tr_\wc(W)$. Then $\P_{\gamma+1}\cong\P_\gamma*\dot{\Q}(\gamma,W\cap\gamma)$. Working in $V[G_\gamma*H_\gamma]$, fix $A\subseteq\gamma$ and let $\dot{A}\in H_{\kappa^+}$ be a $\P_\gamma*\dot{\Q}(\gamma,W\cap\gamma)$-name such that $\dot{A}_{G_\gamma*H_\gamma}=A$. Working in $V$, by Lemma \ref{lemma_X_not_weakly_compact_in_N}, it follows that there is a $\gamma$-model $M$ with $\gamma,\dot{A},\P_{\gamma+1},W\cap\gamma\in M$ and an elementary embedding $j:M\to N$ with critical point $\gamma$ such that $W\cap \gamma$ is not weakly compact in $N$.\footnote{Of course $W\cap\gamma$ may or may not be a weakly compact subset of $\gamma$ in $V$, but in either case such an embedding exists. If $W\cap \gamma$ is a weakly compact subset of $\gamma$, then Lemma \ref{lemma_X_not_weakly_compact_in_N} applies. If $W\cap\gamma$ is not a weakly compact subset of $\gamma$, let $C\subseteq\gamma$ be a $1$-club with $C\cap W\neq\emptyset$, let $M$ be a $\gamma$-model with $\gamma,\dot{A},\P_{\gamma+1},W\cap\gamma,C\in M$ and let $j:M\to N$ be an elementary embedding with critical point $\gamma$, where $N$ is also a $\gamma$-model. Then since $C\in N$ and $C$ is $1$-club in $N$, it follows that $W\cap\gamma$ is not weakly compact in $N$.} Since $\gamma$ is a \emph{Mahlo} limit point of $\Tr_\wc(W)$ in $N$, it follows that $j(\P_\gamma)\cong\P_\gamma*\dot{\Q}(\gamma,W\cap\gamma)*\dot{\P}'_{\gamma,j(\gamma)}$ where $\dot{\P}'_{\gamma,j(\gamma)}$ is a $\P_{\gamma+1}$-name for $N$'s version of the iteration from $\gamma+1$ to $j(\gamma)$. It follows that $G_\gamma$ is $(M,\P_\gamma)$-generic and $G_\gamma*H_\gamma$ is $(N,\P_\gamma*\dot{\Q}(\gamma,W\cap\gamma))$-generic, and furthermore that $N[G_\gamma*H_\gamma]^{<\gamma}\cap V[G_\gamma*H_\gamma]\subseteq N[G_\gamma*H_\gamma]$. Thus, as before, we may build an $(N[G_\gamma*H_\gamma],\P_{\gamma,j(\gamma)})$-generic filter $K\in V[G_\gamma*H_\gamma]$. Let $\widehat{G}_{j(\gamma)}=G_\gamma*H_\gamma*K$ Since $j[G_\gamma]\subseteq \widehat{G}_{j(\gamma)}$ the embedding extends to $j:M[G_\gamma]\to N[\widehat{G}_{j(\gamma)}]$. Let $m=\bigcup H_\gamma$. From the definition of $\dot{\Q}(\gamma,W\cap\gamma)$, it follows that $\supp(m)\subseteq W\cap\gamma$. Furthermore, since $W\cap\gamma$ is not weakly compact in $N$, it follows by Lemma \ref{lemma_approximation_and_cover}, that $W\cap\gamma$ is not weakly compact in $N[\widehat{G}_{j(\gamma)}]$. Thus, $m\in j(\Q(\gamma,W\cap\gamma))$. We can build an $(N[\widehat{G}_{j(\gamma)}],j(\Q(\gamma,W\cap\gamma)))$-generic filter $\widehat{H}_{j(\gamma)}\in V[G_\gamma*H_\gamma]$ with $m\in\widehat{G}_{j(\gamma)}$. The embedding extends to $j:M[G_\gamma*H_\gamma]\to N[\widehat{G}_{j(\gamma)}*\widehat{H}_{j(\gamma)}]$, and thus $\gamma$ is weakly compact in $V[G_{\gamma+1}]$.

\textsc{Case 2.} Suppose $\gamma$ is not a limit point of $\Tr_\wc(W)$. In this case $\P_{\gamma+1}\cong \P_\gamma$. If $j:M\to N$ witnesses the weak compactness of $\gamma$ in $V$ with $\P_{\gamma+1},\Tr_{\wc}(W),\ldots\in M$ then $\gamma$ is not a limit point of $\Tr_{\wc}(W)$ in $N$ and hence $j(\P_{\gamma+1})\cong \P_\gamma * \dot{\P}_{\gamma,j(\gamma)}'$ where $\dot{\P}_{\gamma,j(\gamma)}$ is a $\P_\gamma$-name for $N$'s version of the iteration from $\gamma+1$ to $j(\gamma)$. One may now use a standard argument as in Case I to show that $\gamma$ is weakly compact in $V[G_{\gamma+1}]$.

Thus the class of weakly compact cardinals is preserved $\WC^{V[G_{\kappa+1}]}=\WC^V$, establishing (2).

To prove (3), suppose $S\subseteq\gamma<\kappa$ is a weakly compact subset of $\gamma$ in $V$ and $W\cap\gamma\subseteq S$. To show that $S$ is weakly compact in $V[G_{\kappa+1}]$ it will suffice, by Lemma \ref{lemma_ground_model_distributive}, to argue that $S$ is weakly compact in $V[G_{\gamma+1}]$. Fix $A\in\mathscr{P}(\kappa)^{V[G_{\gamma+1}]}$ and let $\dot{A}\in H_{\gamma^+}$ be a $\P_{\gamma+1}$-name with $\dot{A}_G=A$. By Lemma \ref{lemma_X_not_weakly_compact_in_N}, we may let $M$ be a $\gamma$-model with $\gamma,\dot{A},\P_{\gamma+1},S,W\cap\gamma\in M$ and let $j:M\to N$ be an elementary embedding with critical point $\gamma$ where $N$ is a $\gamma$-model such that $\kappa\in j(S)$ and $S=j(S)\cap\gamma$ is not a weakly compact subset of $\gamma$ in $N$. Since $W\cap\gamma\subseteq S$, it follows that $W$ is not a weakly compact subset of $\gamma$ in $N$, and thus we can lift the embedding using a master condition argument as in \textsc{Case 1} or \textsc{Case 2} above. Thus $S$ remains a weakly compact subset of $\gamma$ in $V[G_{\gamma+1}]$.
\end{proof}

We now show that Theorem \ref{theorem_main_theorem} is a consequence of Theorem \ref{theorem_adding_a_nonreflectin_weakly_compact_set}, which establishes the consistency of the failure of the weakly compact reflection principle $\Refl_\wc(\kappa)$ at a weakly compact cardinal of any order $\gamma<\kappa^+$.

\begin{proof}[Proof of Theorem \ref{theorem_main_theorem}]
Suppose $\kappa$ is $(\alpha+1)$-weakly compact where $\omega\leq\alpha<\kappa$. We must show that there is a cofinality-preserving forcing extension in which there is a nonreflecting weakly compact subset of $\kappa$, the class of weakly compact cardinals is preserved and $\kappa$ remains $(\alpha+1)$-weakly compact. 

Choose a sequence $\vec{A}=\<A_\xi\st\xi<\kappa^+\>$ of subset of $\kappa$ with $A_0=\kappa$ such that for all $\xi<\kappa^+$,
\begin{enumerate}
\item $A_\xi$ is a representative of the equivalence class $\Tr_\wc^\xi([\kappa]_1)$,
\item $A_{\xi+1}=\Tr_\wc(A_\xi)$ and
\item if $\xi$ is a limit then $A_\xi=\bigtriangleup\{A_\zeta\st\zeta<\xi\}$.
\end{enumerate}  
Let $W=A_\alpha\in\Tr_\wc^\alpha([\kappa]_1)$. Let $\P_{\kappa+1}$ be the forcing from the proof of Theorem \ref{theorem_adding_a_nonreflectin_weakly_compact_set}, and suppose $G_{\kappa+1}$ is $(V,\P_{\kappa+1})$-generic. Applying Theorem \ref{theorem_adding_a_nonreflectin_weakly_compact_set} (1), we conclude that in $V[G_{\kappa+1}]$, the set $W$ is weakly compact and the set $E\subseteq W$ is a nonreflecting weakly compact subset of $\kappa$. By Theorem \ref{theorem_adding_a_nonreflectin_weakly_compact_set} (2), the class of weakly compact cardinals is preserved. It remains to show that $\kappa$ remains $(\alpha+1)$-weakly compact in $V[G_{\kappa+1}]$.

Working in $V[G_{\kappa+1}]$, let $\vec{B}=\<B_\xi\st\xi<\kappa^+\>$ be a sequence defined using the same conditions that were used to define $\vec{A}$, but this time we run the definition in $V[G_{\kappa+1}]$; that is, $B_0=\kappa$ and for all $\xi<\kappa^+$,
\begin{enumerate}
\item $B_\xi$ is a representative of the equivalence class $\Tr_\wc^\xi([\kappa]_1)^{V[G_{\kappa+1}]}$,
\item $A_{\xi+1}=\Tr_\wc(A_\xi)^{V[G_{\kappa+1}]}$ and
\item if $\xi$ is a limit then $A_\xi=\bigtriangleup\{A_\zeta\st\zeta<\xi\}^{V[G_{\kappa+1}]}$.
\end{enumerate}
To show that $\kappa$ is $(\alpha+1)$-weakly compact in $V[G_{\kappa+1}]$, it will suffice to show that $W=A_\alpha\subseteq B_\alpha$. We will use induction to prove that for every $\xi\leq\alpha$ we have $A_\xi\subseteq B_\xi$. 

By definition of the sequences we have $A_0=\kappa=B_0$. If $\xi\leq\alpha$ is a limit the result follows immediately from the inductive hypothesis that $A_\zeta\subseteq B_\zeta$ for all $\zeta<\xi$. If $\xi=\zeta+1\leq\alpha$ is a successor, then $A_\xi=A_{\zeta+1}=\Tr_\wc(A_\zeta)$ and $B_\xi=B_{\zeta+1}=\Tr_\wc(B_\zeta)$. Let us show that $A_\xi\subseteq B_\xi$. Suppose $\gamma\in A_\xi$, this means that $A_\zeta\cap\gamma$ is a weakly compact subset of $\gamma$ in $V$. By Theorem \ref{theorem_adding_a_nonreflectin_weakly_compact_set} (3), it follows that $A_\zeta\cap\gamma$ remains a weakly compact subset of $\gamma$ in $V[G_{\kappa+1}]$. By the inductive hypothesis, $A_\zeta\subseteq B_\zeta$ and thus $B_\zeta\cap\gamma$ is a weakly compact subset of $\gamma$ in $V[G_{\kappa+1}]$. Thus we have shown that $\Tr_\wc^\alpha([\kappa]_1)=[B_\alpha]_1>0$ in $V[G_{\kappa+1}]$ and hence $\kappa$ remains $(\alpha+1)$-weakly compact in $V[G_{\kappa+1}]$.
\end{proof}

\section{Questions}\label{section_questions}

Many questions regarding the weakly compact and $\Pi^1_n$-reflection principles remain open.

\subsection{Consistency strength}

\begin{question}
What is the consistency strength of the statement ``there is a cardinal $\kappa$ such that the weakly compact reflection principle $\Refl_\wc(\kappa)$ holds''? More generally, what is the strength of ``there is a cardinal $\kappa$ such that the $\Pi^1_n$-reflection principle $\Refl_n(\kappa)$ holds''?
\end{question}

Mekler and Shelah \cite{MR1029909} showed that the statement ``there is a regular cardinal $\kappa$ such that every stationary subset of $\kappa$ has a stationary initial segment'' is equiconsistent with the existence of a \emph{reflection cardinal}; i.e. a cardinal $\kappa$ which carries a normal ideal $I$ coherent with the nonstationary ideal such that the $I$-positive sets are closed under the operation $\Tr_0$ where $\Tr_0(X)=\{\alpha<\kappa\st \text{$X\cap\alpha$ is stationary}\}$. As shown in \cite{MR1029909}, the existence of a reflection cardinal is a hypothesis who's consistency strength is strictly between that of a greatly Mahlo cardinal and a weakly compact cardinal. 
\begin{definition}
A cardinal $\kappa$ is a \emph{weak compactness reflection cardinal} if and only if there exists a normal ideal $I$ on $\kappa$ such that 
\begin{enumerate}
\item $I$ is coherent to the weakly compact ideal $\Pi^1_1(\kappa)$ and
\item the $I$-positive sets $I^+$ are closed under the operation $\Tr_\wc$.
\end{enumerate}
\end{definition}

We conjecture that the existence of a cardinal at which the weakly compact reflection principle holds is equiconisistent with the existence of a weak compactness reflection cardinal, and this hypothesis is strictly between the existence of a greatly weakly compact cardinal and the existence of a $\Pi^1_2$-indescribable cardinal in the large cardinal hierarchy. Notice that the forward direction of the above conjectured equiconsistency easily follows from Lemma \ref{lemma_trace_of_wc_is_wc}: if $\Refl_{wc}(\kappa)$ holds then $\kappa$ is a weak compactness reflection cardinal since $\Pi^1_1(\kappa)$ is a normal ideal and $\Pi^1_1(\kappa)^+$ is closed under the operation $\Tr_\wc$. One may be able to establish the remaining parts of the conjecture, by carrying out arguments similar to \cite[Corollary 5 and Theorem 7]{MR1029909}, however many technical difficulties need to be overcome. Similarly, one can define the notion of $\Pi^1_n$-reflection cardinal and formulate a similar conjecture regarding the consistency strength of the $\Pi^1_n$-reflection principle.

\subsection{Great weak compactness}

Above we have established that $\Refl_\wc(\kappa)$ implies that $\kappa$ is $\omega$-weakly compact, and that it is consistent that $\Refl_\wc(\kappa)$ fails and $\kappa$ is $\gamma$-weakly compact, for any fixed $\gamma<\kappa^+$.
\begin{question}\label{question_greatly}
Is it consistent that there exists a greatly weakly compact cardinal $\kappa$ such that $\Refl_\wc(\kappa)$ fails?
\end{question}
Regarding Question \ref{question_greatly}, in our proof of Theorem \ref{theorem_main_theorem}, we were able to show that the weak compactness of a set $W=A_\alpha$ is preserved by the iteration $\P_{\kappa+1}$ as follows. We showed that the set $E\subseteq W$ added by the iteration $\P_{\kappa+1}$ is weakly compact, by choosing an embedding $j:M\to N$ with critical point $\kappa$ such that $\kappa\in j(W)$ and $W$ is not weakly compact in $N$. This allowed us to conclude that $W$ remains weakly compact, since it contains $E$. In order to preserve the great weak compactness of $\kappa$, one would want to argue that the weak compactness of $\kappa^+$ subsets of $\kappa$ is preserved. It is not clear that the techniques used in this article could be adapted to accomplish this.

\subsection{The least $\omega$-weakly compact cardinal}

One may also be able to adapt another argument \cite[Theorem 9]{MR1029909} of Mekler and Shelah to answer the following question.
\begin{question}
Is it consistent that $\Refl_\wc(\kappa)$ holds and $\kappa$ is not $(\omega+1)$-weakly compact? Is it consistent that $\Refl_\wc(\kappa)$ holds at the least $\omega$-weakly compact cardinal?\footnote{This question has been answered in the affirmative; see the forthcoming paper by the author and Hiroshi Sakai.}
\end{question}

\subsection{Resurrection of the weakly compact reflection prinicple}\label{section_resurrecting}

%Suppose $\kappa$ is weakly compact and 

 Recall that if $\kappa$ is weakly compact, then there is a forcing $\P$ such that in $V^\P$ the weak compactness of $\kappa$ is indestructible by $\Add(\kappa,1)$.\footnote{Suppose $\kappa$ is weakly compact and let $\P$ be the standard Easton-support iteration of length $\kappa$ which adds a Cohen subset to each inaccessible cardinal less than $\kappa$. Then standard arguments show that, in $V^\P$, $\kappa$ remains weakly compact in $V^\P$ and the weak compactness of $\kappa$ is indestructible by the forcing to add a Cohen subset of $\kappa$.} Working in $V^\P$, let $\mathbb{S}$ be the natural forcing for adding a nonreflecting stationary subset of $\kappa$ and let $\dot{S}$ be an $\mathbb{S}$-name for this set. Then $\kappa$ is not weakly compact in $V^{\P*\dot{\mathbb{S}}}$. Working in $V^{\P*\dot{\mathbb{S}}}$, let $\mathbb{C}$ be the forcing to shoot a club through $\kappa\setminus \dot{S}$. One can argue that, in $V^\P$, the separative poset $\mathbb{S}*\dot{\mathbb{C}}$ contains a ${<}\kappa$-closed dense subset of size $\kappa$ and hence must be equivalent to the forcing $\Add(\kappa,1)$ to add a Cohen subset to $\kappa$. Thus, the weak compactness of $\kappa$ is \emph{resurrected} in $V^{\P*\dot{\mathbb{S}}*\dot{\mathbb{C}}}$.

\begin{question}\label{question_resurrection}
Is it consistent relative to some large cardinal hypothesis on $\kappa$ that the weakly compact reflection principle $\Refl_{\wc}(\kappa)$ holds and there is a forcing $\P$ which kills $\Refl_{\wc}(\kappa)$ by adding a nonreflecting weakly compact set such that in $V^\P$ there is a forcing which resurrects $\Refl_{\wc}(\kappa)$?
\end{question}

It seems that the usual techniques for answering such questions do not work in this case. Indeed, it appears that the argument for showing that $\S*\dot{\mathbb{C}}$ contains a ${<}\kappa$-closed dense set does not generalize to the weak compactness context. Let us illustrate the difficulty.

Suppose $\kappa$ is a measurable cardinal, $W\subseteq\WC_\kappa$, $\P_{\kappa+1}\cong\P_\kappa*\dot{\Q}(\kappa,W)$ is the iteration from Theorem \ref{theorem_adding_a_nonreflectin_weakly_compact_set} for adding a nonreflecting weakly compact set and let $\dot{E}$ be a $\P_{\kappa+1}$-name for the generic nonreflecting weakly compact subset of $\kappa$. Then $\Refl_{\wc}(\kappa)$ fails in $V^{\P_{\kappa+1}}$. In order to resurrect $\Refl_{\wc}(\kappa)$, it is natural to attempt to force over $V^{\P_{\kappa+1}}$ to kill the weak compactness of $\dot{E}$. This can be done using Hellsten's \cite{MR2026390} Easton-support iteration $\mathbb{H}_{\kappa+1}$ of length $\kappa+1$, which we describe below, to shoot a $1$-club through $\kappa\setminus \dot{E}$ and preserve the weak compactness of $\kappa$. Indeed, this kills the weak compactness of $\dot{E}$ by Sun's characterization of weakly compact sets mentioned in Section \ref{sectionintroduction} which states that $W\subseteq\kappa$ is weakly compact if and only if $W\cap C\neq\emptyset$ for every $1$-club $C\subseteq\kappa$. Working in $V^{\P_{\kappa+1}}$, let $\dot{\mathbb{H}}_{\kappa+1}\cong\dot{\mathbb{H}}_{\kappa}*\dot{T}^1(\kappa\setminus\dot{E})$ be Hellsten's forcing  for shooting a $1$-club through the complement of $\dot{E}$ such that nontrivial forcing occurs at stage $\gamma$ in $\dot{\mathbb{H}}_{\kappa+1}$ if and only if nontrivial forcing occured at stage $\gamma$ in $\P_{\kappa+1}$. We would like to prove that $\P_{\kappa+1}*\dot{\mathbb{H}}_{\kappa+1}$ is forcing equivalent to the Easton-support iteration $\S_{\kappa+1}\cong\S_\kappa*\Add(\kappa,1)$ which adds a single Cohen subset to every $\gamma\leq\kappa$ at which nontrivial forcing occurs. \emph{If} this could be done then standard arguments show that since $\kappa$ is measurable in $V$, then $\kappa$ remain measurable in $V^{\P_{\kappa+1}*\dot{\H}_{\kappa+1}}=V^{\mathbb{S}_{\kappa+1}}$, and hence $\Refl_{\wc}(\kappa)$ has been resurrected.

However, when attempting to show that $\P_{\kappa+1}*\dot{\mathbb{H}}_{\kappa+1}\cong \S_{\kappa+1}$ we encountered what seems to be a serious problem with this method. To explain the issue we must describe Hellsten's forcing in more detail. The main use of Hellsten's forcing is the following.

\begin{theorem}[Hellsten, \cite{MR2026390}]\label{theorem_1_club_shooting}
Suppose $E$ is a weakly compact subset of $\kappa$. There is a forcing extension in which $E$ contains a $1$-club, all weakly compact subsets of $E$ remain weakly compact and thus $\kappa$ remains a weakly compact cardinal.
\end{theorem}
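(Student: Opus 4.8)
The plan is to use Hellsten's forcing $\mathbb{T}^1(E)$, which shoots a $1$-club through $E$ by approximations, in close analogy with the poset $\Q(\gamma,W)$ of the previous section. A condition will be a function $p\colon\alpha\to 2$ with $\alpha<\kappa$ such that (i) $\{\xi<\alpha\st p(\xi)=1\}\subseteq E$ and (ii) for every inaccessible $\beta\le\alpha$, if $\{\xi<\beta\st p(\xi)=1\}$ is stationary in $\beta$ then $\beta<\alpha$ and $p(\beta)=1$; conditions are ordered by end-extension. Writing $D=\{\xi<\kappa\st p(\xi)=1\text{ for some }p\in G\}$ for the generic object, the target is to show that in $V[G]$ the set $D$ is a $1$-club with $D\subseteq E$, that every weakly compact $S\subseteq E$ of $V$ remains weakly compact, and hence, taking $S=E$, that $\kappa$ is still weakly compact.

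First I would record the elementary forcing-theoretic facts exactly as for $\Q(\gamma,W)$: if $\mu$ is the least inaccessible cardinal with $E\cap\mu$ stationary in $\mu$, then $\mathbb{T}^1(E)$ is ${<}\mu$-closed and $\kappa$-strategically closed, Player II winning by always playing, at even stages, a condition whose domain is a successor ordinal taking value $0$ at its last coordinate, which keeps the set of $1$'s nonstationary at inaccessible limits of the domains played. In particular $\mathbb{T}^1(E)$ is ${<}\kappa$-distributive, and since $\kappa$ is inaccessible $|\mathbb{T}^1(E)|=\kappa$, so the forcing is $\kappa^+$-c.c.; thus it adds no bounded subsets of $\kappa$, preserves cofinalities, and preserves the inaccessibility of $\kappa$. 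Next, a density argument using only that $E$ is stationary shows $D$ meets every ground model club, and a further argument building a play of the game that reaches an inaccessible $\mu^*\in E$ and places a $1$ at $\mu^*$ (using that $E$, being weakly compact, contains unboundedly many inaccessibles) shows $D$ meets every club of $V[G]$, so $D$ is stationary; finally $D\subseteq E$ and the $1$-closure of $D$ follow from clause (ii) and the fact that $\powerset(\beta)$ is unchanged for $\beta<\kappa$. Hence $D$ is a $1$-club contained in $E$.

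The heart of the argument is preservation of weak compactness of subsets of $E$, which I would establish by lifting embeddings. Fix a weakly compact $S\subseteq E$ in $V$; to see $S$ is weakly compact in $V[G]$ I verify item (4) of Lemma~\ref{lemma_characterizations_of_weak_compactness}. Given $A\in\powerset(\kappa)^{V[G]}$, pick a name $\dot A\in H_{\kappa^+}^V$, and in $V$ use the weak compactness of $S$ to get a $\kappa$-model $M$ with $\kappa,S,E,\mathbb{T}^1(E),\dot A\in M$ and an elementary $j\colon M\to N$ with $\crit j=\kappa$, $N$ a $\kappa$-model, and $\kappa\in j(S)$. Since $\crit j=\kappa$ we have $j(E)\cap\kappa=E$, so $\mathbb{T}^1(E)$ is recovered inside $N$ as $\{p\in j(\mathbb{T}^1(E))\st\dom p<\kappa\}$ and $G$ is $(N,\mathbb{T}^1(E))$-generic; using ${<}\kappa$-distributivity and Lemma~\ref{lemma_ground_closure}, $M[G]$ and $N[G]$ are $\kappa$-models in $V[G]$. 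The crucial observation is that the function $\bar p$ of domain $\kappa+1$ with $\bar p\restrict\kappa=$ the characteristic function of $D$ and $\bar p(\kappa)=1$ is a condition in $j(\mathbb{T}^1(E))$ as computed in $N[G]$: its set of $1$'s is $D\cup\{\kappa\}$, which is a subset of $j(E)$ precisely because $\kappa\in j(S)\subseteq j(E)$, clause (ii) below $\kappa$ holds since $D$ is $1$-closed and stationarity of subsets of $\beta<\kappa$ is absolute, and clause (ii) at $\kappa$ is satisfied trivially because adding a $1$ at $\kappa$ is permitted as soon as $\kappa\in j(E)$. Since $j(\mathbb{T}^1(E))$ is $\kappa$-strategically closed in $N[G]$ and $|N[G]|=\kappa$, I can build in $V[G]$ an $(N[G],j(\mathbb{T}^1(E)))$-generic filter $H$ with $\bar p\in H$; as $\bar p$ end-extends every $p\in G$ we get $j[G]=G\subseteq H$, so by Lemma~\ref{lemma_lifting_criterion} the embedding lifts to $j^+\colon M[G]\to N[H]$ with $N[H]$ a $\kappa$-model in $V[G]$ and $j^+(S)=j(S)\ni\kappa$. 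This verifies item (4), so $S$ is weakly compact in $V[G]$; taking $S=E$ shows $E$, and therefore $\kappa$, remain weakly compact, completing the proof.

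The main obstacle is the master-condition step, and it is exactly here that the weak compactness, rather than merely the stationarity, of $E$ is used: one needs an embedding $j$ with $\kappa\in j(S)\subseteq j(E)$, since this is what makes $\bar p$ a legitimate condition in $j(\mathbb{T}^1(E))$ and thereby allows $j$ to be lifted, whereas a purely stationary hypothesis on $E$ would not supply such an embedding. Everything else — the closure and chain-condition properties of $\mathbb{T}^1(E)$, the $1$-clubness of $D$, and the verification that the relevant generic extensions of $M$ and $N$ are $\kappa$-models — is routine given the lemmas above and the parallel analysis of $\Q(\gamma,W)$.
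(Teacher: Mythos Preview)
Your single-step approach does not work, and this is exactly why Hellsten's forcing---as the paper describes immediately after the theorem---is an Easton-support \emph{iteration} $\mathbb{H}_{\kappa+1}$ of length $\kappa+1$, forcing with $T^1(E\cap\beta)$ at every Mahlo $\beta\le\kappa$, rather than the one-step poset $T^1(E)$.

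The gap is at the master-condition step. Your $\bar p$ encodes $D$, and $D\notin N$, so $\bar p\notin N$. But $j(\mathbb{T}^1(E))$ is a fixed set in $N$, and the lifting criterion (Lemma~\ref{lemma_lifting_criterion}) requires $H$ to be $N$-generic for \emph{that} set; reinterpreting the definition of $T^1(j(E))$ in the larger model $N[G]$ gives a different poset, and a generic for it does not yield the required target $N[H]$. In fact there is \emph{no} $N$-generic $H\subseteq j(\mathbb{T}^1(E))$ with $G=j[G]\subseteq H$: the set of conditions of domain ${>}\kappa$ is dense in $j(\mathbb{T}^1(E))$, yet any $q\in j(\mathbb{T}^1(E))\subseteq N$ with $\dom q>\kappa$ that is compatible under end-extension with every $p\in G$ must satisfy $q\restrict\kappa=\chi_D$, placing $D$ in $N$, a contradiction. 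The preparatory iteration repairs exactly this obstruction: since $\kappa$ is Mahlo in $N$ one has $j(\mathbb{H}_\kappa)\cong\mathbb{H}_\kappa*\dot T^1(E)*\dot{\mathbb{H}}_{\kappa+1,j(\kappa)}$, so the $V$-generic for $\mathbb{H}_{\kappa+1}=\mathbb{H}_\kappa*\dot T^1(E)$ supplies the first $\kappa{+}1$ stages on the $N$-side and hence puts $D$ into $N[\widehat G_{j(\kappa)}]$; the final-stage poset $j(T^1(E))$ is then computed in a model containing $D$, so $D\cup\{\kappa\}$ is a legitimate master condition there and the lift goes through.
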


Let $X\subseteq\kappa$ be an unbounded subset of an inaccessible cardinal $\kappa$. We define a poset $T^1(X)$ as follows. Conditions in $T^1(X)$ are all $c\subseteq X$ such that $c$ is bounded and $1$-closed, meaning that for every inaccessible $\alpha<\kappa$ if $c\cap\alpha$ is stationary in $\alpha$ then $\alpha\in c$. The ordering on $T^1(X)$ is by end extension: $c_1\leq c_2$ iff $c_2=c_1\restrict\sup\{\alpha+1\st\alpha\in c_2\}$. 
Hellsten proved \cite[Lemma 3]{MR2653962} that $T^1(X)$ is $\kappa$-strategically closed. The forcing $\H_{\kappa+1}$ which Hellsten used to prove Theorem \ref{theorem_1_club_shooting} is an Easton-support iteration $\<\H_\alpha,\dot{\C}_\beta\st \alpha\leq\kappa+1,\beta\leq\kappa\>$ such that
\begin{enumerate}
\item if $\beta\leq\kappa$ is inaccessible and $E\cap\beta$ is unbounded in $\beta$ then $\dot{\C}_\beta$ is an $\H_\beta$-name for $T^1(E\cap\beta)^{V^{\H_\beta}}$ and
\item otherwise $\dot{\C}_\beta$ is an $\H_\beta$-name for trivial forcing.
\end{enumerate}

In order to show that $\P_{\kappa+1}*\dot{\H}_{\kappa+1}\cong\S_{\kappa+1}$, we would need to prove the following:

\begin{quote}
($*$) Suppose $\gamma$ is an inaccessible cardinal, $W\subseteq\WC_\gamma$ and $\gamma$ is a limit point of $\Tr_{\wc}(W)$. Let $\dot{E}$ be the canonical $\Q(\gamma,W)$-name for the subset of $W$ added by forcing with $\Q(\gamma,W)$. Then $\Q(\gamma,W)*\dot{T}^1(\gamma\setminus \dot{E})$ contains a ${<}\gamma$-closed dense set, and is hence forcing equivalent to $\Add(\gamma,1)$.
\end{quote}

Let us explain why standard methods do not seem to establish ($*$). Let $D$ be the set of conditions $(e,\dot{c})\in \Q(\gamma,W)*T^1(\gamma\setminus\dot{E})$ such that $e\forces_{\Q(\gamma,W)}$ ``$\dom(e)=\sup(\dot{c})=\alpha+1\in\Tr_{\wc}(W)$ and $\supp(e)\cap \dot{c}\neq\emptyset$.'' It is straightforward to show that $D$ is a dense subset of $\Q(\gamma,W)*\dot{T}^1(\gamma\setminus\dot{E})$. However, it is not known whether $D$ is ${<}\kappa$-closed. Suppose $\<(e_\alpha,\dot{c}_\alpha)\st\alpha<\mu\>$ is a strictly decreasing sequence of conditions in $D$ where $\mu<\gamma$. Since $\Q(\gamma,W)$ is ${<}\gamma$-distributive, we may view each $\dot{c}_\alpha$ as $c_\alpha\in V$. The only nontrivial case occurs when $\mu=\bigcup_{\alpha<\mu}\dom(e_\alpha)=\sup\bigcup_{\alpha<\mu}c_\alpha$ and $W\cap \mu$ is weakly compact. To find a lower bound of the sequence one would like to let $e=\bigcup_{\alpha<\mu}e_\alpha\cup\{(\mu,0)\}$ and then hope that $\bigcup_{\alpha<\mu}c_\alpha$ is a $1$-club subset of $\mu$ disjoint from $e$, so that $\supp(e)$ is a non--weakly compact subset of $\mu$ and hence $e$ is a condition in $\Q(\gamma,W)$. However, in order for $\bigcup_{\alpha<\mu}c_\alpha$ to be a $1$-club subset of $\mu$, it must be stationary in $\mu$, and there seems to be no reason to expect that this is the case. Thus ($*$) and Question \ref{question_resurrection} remain open.

%\bibliography{masterbib}
%\bibliography{../../Bib/masterbib}
%\bibliographystyle{alpha}
%\bibliography{masterbib}

\end{document}